\definecolor{refkey}{gray}{.75}
\definecolor{labelkey}{gray}{.5}
\colorlet{DarkGreen}{green!50!black}
\colorlet{DarkGray}{gray!60!black}
\numberwithin{equation}{section}
\renewcommand{\restriction}{\mathord{\upharpoonright}}
\renewcommand{\epsilon}{\varepsilon}
\newcommand{\given}{\;\big|\;}
\newcommand{\one}{\mathbf{1}}
 \definecolor{refkey}{gray}{.5}
 \definecolor{labelkey}{gray}{.5}
\definecolor{light}{gray}{.9}
\newtheorem{maintheorem}{Theorem}
\newtheorem{theorem}{Theorem}[section]
\newtheorem*{theorem*}{Theorem}
\newtheorem{lemma}[theorem]{Lemma}
\newtheorem{claim}[theorem]{Claim}
\newtheorem{proposition}[theorem]{Proposition}
\newtheorem{fact}[theorem]{Fact}
\theoremstyle{definition}{

\newtheorem{definition}[theorem]{Definition}
\newtheorem*{definition*}{Definition}

\newtheorem{remark}[theorem]{Remark}
}
\renewcommand{\P}{\mathbb P}
\newcommand{\Z}{\mathbb Z}
\newcommand{\cC}{\ensuremath{\mathcal C}}
\newcommand{\cH}{\ensuremath{\mathcal H}}
\newcommand{\cI}{\ensuremath{\mathcal I}}
\newcommand{\cS}{\ensuremath{\mathcal S}}
\newcommand{\cZ}{\ensuremath{\mathcal Z}}
\newcommand{\llb }{\llbracket}
\newcommand{\rrb }{\rrbracket}
\renewcommand{\epsilon}{\varepsilon}
\newcommand{\tmix}{t_{\textsc{mix}}}
\newcommand{\gap}{\text{\tt{gap}}}
\newcommand{\tv}{{\textsc{tv}}}
\newcommand{\north}{{\textsc{n}}}
\newcommand{\south}{{\textsc{s}}}
\newcommand{\east}{{\textsc{e}}}
\newcommand{\west}{{\textsc{w}}}
\newcommand{\sd}{{\operatorname{sd}}}
\newcommand{\potts}{{\textsc{p}}}
\newcommand{\rc}{{\textsc{fk}}}
\newcommand{\alt}{{\mathrm{mixed}}}
\newcommand{\superimpose}[2]{%
  {\ooalign{$#1\@firstoftwo#2$\cr\hfil$#1\@secondoftwo#2$\hfil\cr}}}
\begin{document}

\title[Boundary conditions and mixing at discontinuous phase transitions]{The effect of boundary conditions on mixing of \\ 2D Potts models at  discontinuous phase transitions}

\author{Reza Gheissari}
\address{R.\ Gheissari\hfill\break
Courant Institute\\ New York University\\
251 Mercer Street\\ New York, NY 10012, USA.}
\email{reza@cims.nyu.edu}

\author{Eyal Lubetzky}
\address{E.\ Lubetzky\hfill\break
Courant Institute\\ New York University\\
251 Mercer Street\\ New York, NY 10012, USA.}
\email{eyal@courant.nyu.edu}

\begin{abstract}
We study Swendsen--Wang dynamics for the critical $q$-state Potts model on the square lattice. For $q=2,3,4$, where the phase transition is continuous, the mixing time $\tmix$ is expected to obey a universal power-law independent of the boundary conditions. On the other hand, for large $q$, where the phase transition is discontinuous, the authors recently showed that $\tmix$ is highly sensitive to boundary conditions:  $\tmix \geq \exp(cn)$ on an $n\times n$ box with periodic boundary,  yet under free or monochromatic boundary conditions, $\tmix \leq\exp(n^{o(1)})$. 

In this work we classify this effect under boundary conditions that interpolate between these two (torus vs.\ free/monochromatic). 
Specifically, if one of the $q$ colors is red,
mixed boundary conditions such as red-free-red-free on the 4 sides of the box  induce $\tmix \geq \exp(cn)$, yet Dobrushin boundary conditions such as red-red-free-free, as well as red-periodic-red-periodic, induce sub-exponential mixing.
\end{abstract}

{\mbox{}
\vspace{-1.25cm}
\maketitle
}
\vspace{-0.5cm}

\section{Introduction}
The $q$-state Potts model at inverse temperature $\beta>0$ is a generalization of the Ising model ($q=2$) to $q\geq 3$ possible states. It is a canonical model of statistical physics and is one of the simplest models exhibiting a discontinuous (first-order) phase transition for some choices of $q$. Concretely, the model on a graph $G$ is a probability distribution over $\{1,...,q\}^{V(G)}$ with $\mu(\sigma)\propto \exp(\beta \sum_{ij\in E(G)} \boldsymbol 1\{\sigma_i=\sigma_j\})$.
Much of the analysis of the Potts model relies heavily on the random cluster (FK) model; the FK model is a model of dependent bond percolation parametrized by $(p,q)$, identified with the $q$-state Potts model via the Edwards--Sokal coupling \cite{EdSo88} when $q$ is integer and $p=1-e^{-\beta}$. 

On $\mathbb Z^2$, substantial recent progress has been made in understanding the Potts and random cluster phase transitions in $\beta$ and $p$, respectively.
On that geometry, the critical $p_c(q) = 1-e^{-\beta_c}$ was identified \cite{BeDu12} for all $q\geq 1$ with the self-dual point $p_\sd(q)= \frac {\sqrt q}{1+\sqrt q}$. 
It was shown in~\cite{DST15} that for $q\leq 4$, the phase transition is continuous (there is a unique infinite-volume Gibbs measure at $\beta_c(q)$) whereas for $q>4$, the phase transition is discontinuous ~\cite{DGHMT16} (there are $q+1$ extremal infinite-volume Gibbs measures corresponding to $q$ ordered phases and an additional disordered phase). In the latter case, the phase asymmetry at the critical point is expected to induce order-order and order-disorder surface tensions (known rigorously for $q$ large~\cite{LMMRS91,MMRS91}). In the present work we study the relationship between the order-disorder surface tension and the effect of boundary conditions on mixing times (time to reach equilibrium) for the critical 2D Potts model.

Specifically, we study the Swendsen--Wang dynamics~\cite{SW87}, a non-local Markov chain suggested in the physics literature as a fast MCMC sampler of the Potts model, as it switches between the different ordered phases (low-temperature bottlenecks) by moving through the FK representation of the Potts model using the Edwards--Sokal coupling.
The authors of~\cite{GL16a} analyzed the mixing times of the Swendsen--Wang dynamics at $\beta_c(q)$ on subsets of $\mathbb Z^2$ as the parameter $q$ varied. In \cite{GL16a}, polynomial and quasipolynomial upper bounds independent of the boundary conditions were proved for $q\leq 4$. When $q>4$, it was shown that $\tmix \geq \exp(cn)$ on the $n\times n$ torus (where the dynamics is slowed by the free energy barrier between the disordered phase and the $q$ ordered phases), while on the $n\times n$ box with free or red boundary conditions and large $q$, the authors proved that $\tmix \leq \exp(n^{o(1)})$.

\begin{figure}
\begin{tabular}{lcll}
\toprule%
Boundary Conditions & & & Swendsen--Wang \\
\midrule

Periodic/Mixed 
&  
\raisebox{-0.35in}{
\begin{tikzpicture}[scale=.20]
     \draw[draw=black, thick] (13,19) rectangle (20,26);
     \draw[draw=black, dashed, thick] (22,19) rectangle (29,26);
     \draw[draw=red,very thick] (22,19)--(29,19);
     \draw[draw=red,very thick] (22,26)--(29,26);
     
     \draw[draw=black, dashed, thick, fill=red, opacity=.05] (22,19) rectangle (29,26);
     
     \node[font=\tiny] at (16.5,26) {$||$};
     \node[font=\tiny] at (16.5,19) {$||$};
     \draw[draw=black] (12.4,22.5)--(13.6,22.5);
     \draw[draw=black] (19.4,22.5)--(20.6,22.5);
     
               \draw[ color=red, very thin, fill=white] (22,19) arc (-18:18:11.2) ;
     \draw[ color=red, very thin, fill=white] (29,19) arc (198:162:11.2) ;

  \end{tikzpicture}}
&
& $\tmix\geq \exp(cn)$ \\
\midrule[0.25pt]
\noalign{\medskip}
Dobrushin &  
\raisebox{-0.25in}{
\begin{tikzpicture}[scale=.2]
     \draw[draw=red, very thick, fill=red, opacity=.05] (13,10) rectangle (20,17);

     \draw[draw=red, very thick, fill=red, opacity=.05] (22,10) rectangle (29,17);
     \draw[fill=red, opacity=.05] (31,17) -- (38,17)--(38,10);

     \draw[draw=red, very thick] (13,10) rectangle (20,17);
     \draw[draw=black, dashed, thick] (22,10) rectangle (29,17);
     \draw[draw=black, dashed] (31,10) rectangle (38,17);

     \draw[draw=red,very thick] (22,10)--(22,17);
     \draw[draw=red,very thick] (22,17)--(29,17);
     \draw[draw=red,very thick] (29,10)--(29,17);
     \draw[draw=red,very thick] (31,17)--(38,17);
     \draw[draw=red,very thick] (38,10)--(38,17);
     
          \draw[ color=red, fill=red, opacity=.05] (31,17) arc (207:243:8) ;
     \draw[ color=red, very thin] (31,17) arc (207:243:8) ;

     \draw[ color=red, very thin, fill=white] (38,10) arc (27:63:8) ;

     \draw[ color=red, very thin, fill=white] (29,10) arc (72:108:11.2) ;

  \end{tikzpicture}}
 & & $\tmix\leq \exp(O(\sqrt{n\log n}))$ \\
\noalign{\medskip}
\midrule[0.25pt]
\noalign{\medskip}
Cylindrical 
&  
\raisebox{-0.25in}{
\begin{tikzpicture}[scale=.20]
     \draw[draw=red, very thick, fill=red, opacity=.05] (13,1) rectangle (20,8);
     \draw[draw=white, fill=red, opacity=.05] (22,1) rectangle (25,8);

     \draw[draw=black, thick] (13,1) rectangle (20,8);
     \draw[draw=black, dashed] (22,1) rectangle (29,8);
     \draw[draw=black,dashed] (31,1) rectangle (38,8);
     \draw[draw=red, very thick] (13,1)--(13,8);
     \draw[draw=red, very thick] (20,1)--(20,8);
     \draw[draw=black,thick] (22,1)--(29,1);
     \draw[draw=black,thick] (22,8)--(29,8);
     \draw[draw=black,thick] (31,1)--(38,1);
     \draw[draw=black,thick] (31,8)--(38,8);
     \draw[draw=red, very thick] (22,1)--(22,8);

     \node[font=\tiny] at (16.5,8) {$||$};
     \node[font=\tiny] at (16.5,1) {$||$};
     \node[font=\tiny] at (25.5,8) {$||$};
     \node[font=\tiny] at (25.5,1) {$||$};
     \node[font=\tiny] at (34.5,8) {$||$};
     \node[font=\tiny] at (34.5,1) {$||$};
     
          \draw[ color=red, very thin, fill=red, opacity=.05] (25,1) arc (-18:18:5.7) ;
     \draw[ color=red, very thin, fill=white] (25,8) arc (162:198:5.7) ;
     \draw[ color=red, very thin] (25,1) arc (-18:18:5.7) ;

  \end{tikzpicture}}
& &
 $\tmix\leq \exp(n^{1/2+o(1)})$ \\
\midrule[.25pt]
\bottomrule

\end{tabular}
 \caption{Mixing time bounds for Swendsen--Wang dynamics on $n\times n$ boxes with different sets of boundary conditions.   Dashed lines indicate free boundary conditions, the bold red lines denote red boundary conditions, and hash markings $|,||$ indicate periodic boundary conditions on their respective sides. The torus and all-red boundary conditions were considered in~\cite{GL16a}; the other examples are special cases of Theorems~\ref{mainthm:1}--\ref{mainthm:3}.
  }\label{fig:main-results}
 
\end{figure}

This sensitivity to boundary conditions is in analogy to the sensitivity of mixing to boundary conditions in the low-temperature Ising Glauber dynamics, where for low enough temperatures, the plus-minus surface tension is very well understood \cite{DKS} via cluster expansion. There, in $\mathbb Z^2$, the first sub-exponential bound of $\tmix \leq \exp(n^{\frac 12+o(1)})$ was obtained in~\cite{Martinelli94} for all sufficiently low temperatures under plus boundary. This was improved to $\tmix\leq \exp(n^{o(1)})$ in~\cite{MaTo10} and subsequently to $\tmix\leq n^{O(\log n)}$ for all $\beta>\beta_c$ in~\cite{LMST13}; it is believed to be of order $n^2$, governed by motion by mean-curvature~(cf.~\cite{FisherHuse87}).  However, at $\beta_c$, all known bounds are independent of the boundary conditions; in fact, it is believed that $\tmix \asymp n^{z}$ for some universal constant $z$. Such behavior should hold through $q\leq 4$. (See, e.g.,~\cite{GL16a,GL16b} for a more extensive account of related literature.)

When $q>4$ is sufficiently large, similar cluster expansion techniques (large $\beta$ is now replaced by large $q$) lead to an emergent order-disorder surface tension at the critical point, destroying the independence of mixing times and boundary conditions. Certain boundary conditions can destabilize the order-disorder phase symmetry, eliminating the exponential bottlenecks in the state space.
Thus, when the boundary conditions are monochromatic or free, the mixing time of Swendsen--Wang was shown~\cite{GL16a} to be~$\exp(n^{o(1)})$ and is believed to be $n^{O(1)}$, matching the above picture for the low temperature Ising model under plus boundary conditions.

In the present paper, we investigate the relationship between mixing times and boundary conditions that interpolate between periodic and free/monochromatic, at the critical point of a discontinuous phase transition (see Figure~\ref{fig:main-results}).
Our results hold for $q$ large enough, and are expected to hold whenever the order-disorder surface tension (see Def.~\ref{def:surface-tension}) is positive, which in turn should hold for all $q>4$.

For two sequences $f_n,g_n$, here and throughout the paper, we say $f_n \lesssim g_n$ if there exists $C>0$ such that $f_n \leq Cg_n$ for all $n$ and analogously define $f_n \gtrsim g_n$. We let $\Lambda_{n,m} = ([0,n]\times[0,m])\cap \mathbb Z^2$ with nearest-neighbor edges. The boundary of $\Lambda_{n,m}$, denoted $\partial \Lambda_{n,m}$, is the set of vertices of $\Lambda_{n,m}$ adjacent to $\mathbb Z^2 \setminus \Lambda_{n,m}$; let $\partial_{\north}\Lambda=\partial\Lambda_{n,m}\cap (\Z\times\{m\})$ be the its northern boundary of $\Lambda$, and define $\partial_{\east}\Lambda, \partial_\west \Lambda, \partial_\south \Lambda$ similarly.

\begin{maintheorem}[Mixed b.c.]\label{mainthm:1}
Let $q$ be large, $\epsilon>0$, and let $(a_n,b_n,c_n,d_n)$ be marked vertices on $\partial \Lambda_{n,n}$ such that they are not all within $\epsilon n$ of any one side of $\partial \Lambda_{n,n}$ and are all distance greater than $\epsilon n$ from each other. There exists $c(\epsilon,q)>0$ (independent of $n$ and $a_n,b_n,c_n,d_n$) such that the Swendsen--Wang dynamics on $\Lambda_{n,n}$ at $\beta=\beta_c(q)$ with boundary conditions that are red on the boundary segments $(a_n,b_n)$ and $(c_n,d_n)$ and free elsewhere, has
\[\tmix \gtrsim \exp(c n)\,.
\]
In particular, this holds with red boundary conditions on $\partial_{\east,\west} \Lambda_{n,n}$ and free elsewhere.
\end{maintheorem}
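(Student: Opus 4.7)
The plan is to establish an exponential lower bound on $\tmix$ via a Cheeger bottleneck, in the spirit of the torus argument of \cite{GL16a}. It suffices to exhibit a subset $\cA$ of Potts configurations on $\Lambda_{n,n}$ with $\min\{\mu(\cA),\mu(\cA^c)\} \gtrsim 1$ but $Q_{\mathrm{SW}}(\cA,\cA^c) \leq \exp(-cn)$; Cheeger's inequality then yields $\tmix \gtrsim \exp(cn)$. I take $\cA$ to separate the two pure phases at the discontinuous transition, e.g.\
\[
\cA = \bigl\{\sigma : \#\{v \in \Lambda_{n,n}^{\circ} : \sigma_v = \mathrm{red}\} \geq \alpha n^2\bigr\},
\]
with $\Lambda_{n,n}^{\circ}$ a slightly shrunken interior and $\alpha \in (1/q, 1)$ chosen strictly between the disordered bulk red density $1/q$ and the ordered one, close to $1$.

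The two inputs required are (i) the phase coexistence $\min\{\mu(\cA),\mu(\cA^c)\} \gtrsim 1$, and (ii) the interface bound $\mu(\partial\cA) \leq \exp(-cn)$. For (ii), any $\sigma \in \partial\cA$ has a bulk red-density near the critical threshold $\alpha$ and so must contain a macroscopic ordered--disordered interface of length $\Omega(n)$; the positive order-disorder surface tension $\tau_{\mathrm{od}}(q) > 0$ (Def.~\ref{def:surface-tension}, valid for $q$ large per \cite{LMMRS91,MMRS91}) combined with a standard contour sum yields the exponential bound. For (i), the hypotheses on $(a_n,b_n,c_n,d_n)$ ensure that each of the four boundary arcs (two red, two free) has length at least $\epsilon n$, so neither boundary type can unilaterally force the bulk into its preferred phase. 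An FKG sandwich between the mixed-bc measure and the pure-phase all-wired/all-free measures, combined with a block argument using the existence of distinct ordered and disordered infinite-volume Gibbs measures for $q$ large, gives that both phases persist with probability $\gtrsim 1$.

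The main technical obstacle is (iii): converting the $\mu$-bottleneck into a bound on the SW flow, since a single SW step is non-local. Decompose the SW kernel via the Edwards--Sokal coupling as $P_{\mathrm{SW}}(\sigma,\sigma') = \sum_\omega P_{\mathrm{edge}}(\sigma,\omega)\,P_{\mathrm{color}}(\omega,\sigma')$. For any transition $\sigma \in \cA \to \sigma' \in \cA^c$, the intermediate FK configuration $\omega$ cannot contain a macroscopic cluster wired to the red boundary arcs: otherwise that cluster would be deterministically recolored red (by the mixed bc), keeping $\sigma' \in \cA$. But conditional on $\sigma \in \cA$ --- which is dominated by the ordered phase, where the bulk red density is close to $1$ --- $P_{\mathrm{edge}}$ is independent Bernoulli$(p_c(q))$ on the bulk red--red agreement graph, and $p_c(q) = \sqrt q/(1+\sqrt q)$ lies well above the bond-percolation threshold, so a macroscopic wired cluster forms except on an event of probability $\exp(-cn)$. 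Hence $Q_{\mathrm{SW}}(\cA,\cA^c) \leq \exp(-cn)$, completing the argument. This step parallels the torus analysis in \cite{GL16a}; the mixed boundary conditions here only reinforce the persistence of the wired red cluster, so the argument transfers to this setting. The special case of red on $\partial_\east\Lambda_{n,n} \cup \partial_\west\Lambda_{n,n}$ corresponds to placing $(a_n,\ldots,d_n)$ at the corners, which satisfies the hypotheses.
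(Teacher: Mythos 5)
Your proposal takes a genuinely different route from the paper --- working directly with Swendsen--Wang via a density-based bottleneck, rather than reducing to FK Glauber dynamics and using a connectivity-based bottleneck --- but the route as sketched has gaps that are not merely technical.

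The most serious is your phase coexistence claim~(i), which is simply not true for all $\epsilon$-separated configurations of marked vertices. If the two red arcs are much longer than the two free arcs (e.g.\ red arcs of length close to $2n$ each, free arcs of length $\epsilon n$ each, which is compatible with the theorem's hypotheses), the primal crossing joining the two red arcs is overwhelmingly likely and the FK configuration resembles the pure ordered phase with probability $1-\exp(-cn)$; then $\mu(\cA^c)\leq \exp(-cn)$ and the two-sided bottleneck your argument rests on evaporates. The paper circumvents exactly this asymmetry by never attempting to show both sides are macroscopic: it observes that $\{(a_n,b_n)\leftrightarrow(c_n,d_n)\}$ and $\{(b_n,c_n)\stackrel{*}{\leftrightarrow}(d_n,a_n)\}$ are complementary, so one of them has probability at most $1/2$, and by self-duality of the class of $\epsilon$-separated mixed boundary conditions it may pass to the dual model and assume WLOG that the \emph{primal} crossing probability is $\le 1/2$. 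It then only needs the conditional bound $\pi(\partial\cA\mid\cA)\le\exp(-cn)$, which holds regardless of whether $\pi(\cA)$ itself is polynomial or exponentially small. Your proof has no analogue of this maneuver and would have to argue phase coexistence case-by-case, which is not what your sketch does.

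The second gap is in step~(iii). The conductance $Q(\cA,\cA^c)=\sum_{\sigma\in\cA}\mu(\sigma)P_{\mathrm{SW}}(\sigma,\cA^c)$ requires control uniformly over $\sigma\in\cA$, not just typical $\sigma\sim\mu(\cdot\mid\cA)$. Your argument that ``$\sigma\in\cA$ is dominated by the ordered phase, where red density is close to $1$'' covers the typical case; for $\sigma$ with red density near the threshold $\alpha$ the macroscopic-cluster argument can fail entirely (the red-agreement graph at density $\alpha$ need not percolate to the boundary arcs, and even if it does, the giant cluster need not have size $\ge\alpha n^2$). You would need to show separately that $\mu(\{\text{density near }\alpha\}\mid\cA)\le\exp(-cn)$ and then restrict the macroscopic-cluster argument to the well-ordered regime, carefully tracking the density of the Bernoulli$(p_c)$-giant inside the red-agreement graph to make sure it exceeds $\alpha$ after absorbing the $\Theta_p$-loss. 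These are the genuinely delicate points in a direct SW Cheeger argument and they are precisely what the paper avoids by moving to the FK Glauber dynamics (which is local, so $\partial\cA$ is governed by a single pivotal edge) and then invoking Ullrich's comparison (Theorem~\ref{thm:Ullrich-comparison} together with Fact~\ref{fact:one-color}) to transfer the lower bound back to Swendsen--Wang. Finally, in the paper the $\epsilon$-separation is used to guarantee that the strip $S$ joining the two red arcs has aspect angle $\phi$ uniformly bounded away from $\pm\pi/2$, which feeds into the surface-tension/interface-fluctuation estimate (Proposition~\ref{prop:strip-surface-tension}); your proposal invokes $\epsilon$-separation for phase coexistence instead, a role it does not actually play.
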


\begin{maintheorem}[Dobrushin b.c.] \label{mainthm:2}
Let $q$ be large and let $(a_n,b_n)$ be marked vertices on~$\partial \Lambda_{n,n}$. There exists $c(q)>0$ (independent of $n$ and $a_n,b_n$) so that  Swendsen--Wang dynamics at $\beta=\beta_c(q)$ with  boundary conditions that are red on the boundary segment~$(a_n,b_n)$ and free elsewhere, has
\[\tmix \lesssim \exp(c\sqrt{n\log n})\,.
\]
In particular, this holds with red boundary conditions on $\partial_{\south,\west} \Lambda_{n,n}$ and free elsewhere.
\end{maintheorem}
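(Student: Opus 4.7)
The plan is to pass to the FK (random-cluster) representation: under Dobrushin boundary conditions (wired on the arc $(a_n,b_n)$, free elsewhere), the random-cluster measure at $p = p_\sd(q)$ for $q$ large has a single primal--dual interface $\gamma$ separating the wired from the free arc, pinned by a positive order--disorder surface tension. Since the mixing times of Swendsen--Wang and single-bond FK heat-bath dynamics are polynomially equivalent (via Ullrich's comparison), it suffices to prove $\tmix \lesssim \exp(c\sqrt{n\log n})$ for FK-dynamics, and the only equilibrium bottleneck one must rule out is a long excursion of $\gamma$ from its typical location.

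\textbf{Step 1: Interface rigidity on the correct scale.} Using the large-$q$ cluster expansion of~\cite{LMMRS91,MMRS91} together with an Ornstein--Zernike/Wulff analysis, I would show that $\gamma$ is confined to a tube of width $\sqrt{n\log n}$ around the surface-tension geodesic $\gamma^\ast$ joining $a_n$ and $b_n$; quantitatively, transverse deviations of size $\ell$ should have probability at most $\exp(-c\ell^2/n)$, so taking $\ell=\sqrt{n\log n}$ makes the tube a high-probability event with complement of inverse-polynomial weight. \textbf{Step 2: Block dynamics.} Cover $\Lambda_{n,n}$ by $\operatorname{poly}(n)$ overlapping blocks of side $L = \sqrt{n\log n}$. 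Blocks lying outside the tube sit inside a pure phase and mix in time $\exp(L^{o(1)})$ by the free/monochromatic bounds of~\cite{GL16a}. For blocks intersecting the tube, a crude canonical-paths bound on a box of side $L$ gives mixing time $\exp(cL)=\exp(c\sqrt{n\log n})$, since the interface inside such a block admits at most $\exp(cL)$ microscopic shapes. \textbf{Step 3: Assembly.} A Peres--Winkler censoring / Martinelli--Olivieri recursion, combined with Step~1 (which guarantees that each block sees typical boundary conditions at equilibrium), then yields
\[
\tmix \;\lesssim\; N_{\mathrm{blocks}} \cdot \exp(cL) \cdot \operatorname{polylog}(n) \;\lesssim\; \exp\bigl(c\sqrt{n\log n}\bigr).
\]

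The main obstacle is Step~1 at criticality: one must control simultaneously the macroscopic Wulff shape and the transverse Gaussian-scale fluctuations of $\gamma$ at the self-dual point of a first-order transition, uniformly over arbitrary endpoints $(a_n,b_n)\in\partial\Lambda$. The cluster expansion provides positive surface tension but not, on its own, the sharp $\sqrt{n\log n}$ localization; an Ornstein--Zernike-type contour argument appears necessary to pin the interface on the right scale, and any weaker rigidity produces a strictly worse exponent. A secondary difficulty is that Swendsen--Wang is non-local and does not interact cleanly with block decompositions, so either Ullrich's comparison to FK-dynamics or a bespoke SW-block argument must be invoked to transfer the block bound back to the original chain.
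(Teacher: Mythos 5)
Your proposal correctly identifies the main ingredients the paper uses---Ullrich's comparison to reduce to FK Glauber dynamics, an interface-rigidity estimate at scale $\sqrt{n\log n}$, a canonical-paths bound for local mixing, and a Peres--Winkler censoring assembly---and you land on the right exponent. However, the block geometry and (more importantly) the assembly argument diverge from the paper in a way that leaves a real gap. The paper does not cover $\Lambda_{n,n}$ by square blocks and distinguish blocks inside/outside the tube; instead it uses full-width strips $B_i = \mathcal S_{a_n + (N-i)\ell,\,\ell,\,\phi}\cap\Lambda_{n,n}$ of height $\ell = c_3\sqrt{n\log n}$ parallel to $\langle a_n,b_n\rangle$, stacked from the wired arc toward the free arc with each pair overlapping on half their height. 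The strip geometry is essential: the canonical-paths bound (Proposition~\ref{prop:canonical-paths}) gives mixing $\lesssim |E(B_i)| \exp(c\ell\log q)$ because the relevant cut-width is the \emph{height} of the strip, not its $\sim n$ length, and the short sides of each $B_i$ lie on $\partial\Lambda$ so they carry clean all-wired or all-free boundary conditions.

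The step you label Step~3 is where the proposal is genuinely incomplete. You write that the recursion ``combined with Step~1 (which guarantees that each block sees typical boundary conditions at equilibrium)'' finishes the argument, but that is a static statement about $\pi$, and what must be proven is a dynamic one: that the \emph{censored chain started from the worst-case initial state} comes to see those boundary conditions. The paper does this via a careful induction (Claim~\ref{claim:induction1}): update only $B_1$ until time $t_1$, then only $B_2$ until $t_2$, etc.; use Proposition~\ref{lem:surface-tension}/Lemma~\ref{lem:subset-surface-tension} to show that after equilibrating in $B_i$ (with boundary dominating ``wired'' on $\partial_\north B_i$ and free on $\partial_\south B_i$), the order--disorder interface lies in $B_{i+1}$ with probability $1-O(n^{-6})$, so the $0$-started and $1$-started chains couple on $B_i\setminus B_{i+1}$ by revealing the interface from below; this pushes the wired region down one step and feeds the induction. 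A classical Martinelli--Olivieri block-dynamics recursion, which you invoke, would need a spatial-mixing input that is simply unavailable at the self-dual point of a first-order transition; the censoring-plus-monotone-coupling argument is precisely the device that replaces it, and it cannot be cited as a black box. Two smaller corrections: (i) your worry that Ornstein--Zernike machinery is needed to obtain $\sqrt{n\log n}$ localization is unfounded---the $\exp(-ch^2/n)$ transverse tail is exactly Proposition~5 of \cite{MMRS91} (restated here as Proposition~\ref{prop:fk-415}) and comes straight from the large-$q$ cluster expansion; and (ii) the canonical-paths estimate for the FK model is not ``crude'': because of long-range boundary interactions it holds only with high probability over the boundary condition, requiring the control on boundary ``bridges'' established in Proposition~\ref{prop:canonical-paths}, which in turn relies on the exponential decay~\eqref{eq:exp-decay}.
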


\begin{maintheorem}[Cylinders] \label{mainthm:3}
Let $q$ be large. The critical Swendsen--Wang dynamics with periodic boundary conditions on $\partial_{\north,\south} \Lambda_{n,n}$ and either red or free boundary conditions on each of $\partial_\east\Lambda_{n,n}$ and $\partial_\west \Lambda_{n,n}$ satisfies
\[\tmix\lesssim \exp(n^{1/2+o(1)})\,.
\]
\end{maintheorem}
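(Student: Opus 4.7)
\smallskip
\noindent\textbf{Proof plan.} The overall plan is to partition the cylinder into vertical strips of width $\ell = \lceil c\sqrt{n\log n}\rceil$, run block dynamics on these strips, and argue that almost every strip inherits effectively homogeneous (all-red or all-free) boundary conditions on its vertical sides from the E/W boundary of $\Lambda_{n,n}$, hence mixes in time $\exp(n^{o(1)})$ by the bound from~\cite{GL16a}. I would first reduce to the three essentially distinct boundary choices on $\partial_{\east,\west}\Lambda_{n,n}$: (i) both red, (ii) both free, and (iii) one red and one free (the remaining configuration being symmetric to (iii)). In cases (i)--(ii), FKG monotonicity of the FK representation stochastically compares the invariant measure on the cylinder to that on $\Lambda_{n,n}$ with fully red (resp.\ fully free) boundary conditions, for which the $\exp(n^{o(1)})$ mixing bound of~\cite{GL16a} already applies after handling the periodic N/S identification---e.g., by unfolding to a $2n\times n$ domain with red or free boundary everywhere and appealing to monotonicity once more.

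The main case is (iii), in which the boundary forces a single order-disorder interface winding around the cylinder in the periodic (vertical) direction. Using the order-disorder surface tension $\tau(q)>0$ valid for large $q$ (as in~\cite{LMMRS91}), I would first show that this interface is horizontally localized to an $O(\sqrt{n\log n})$-wide window around some typical column $x^*$, with failure probability at most $\exp(-cn^{1/2+o(1)})$. I would then partition the cylinder into $O(n/\ell)$ strips of width $\ell$; strips lying outside the $x^*$-window see effectively homogeneous boundary (from the red or disordered phase dominating their vicinity) with high probability, and each mixes in time $\exp(n^{o(1)})$ by~\cite{GL16a}. For the single interface strip $S^*$, the target bound $\tmix(S^*)\leq\exp(n^{1/2+o(1)})$ should follow by coupling the SW dynamics inside $S^*$ to a random walk for the horizontal position of the interface, biased by the pinning potential arising from the neighbouring red/free regions, which gives relaxation time of order $\exp(\ell)=\exp(\sqrt{n\log n})$. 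The strip estimates are then assembled via block dynamics and the Peres--Winkler censoring inequality to yield the claimed global bound.

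The main obstacle is the analysis inside the interface strip $S^*$: Swendsen--Wang is non-local, so a single step can globally rearrange the interface, and one cannot directly iterate local Glauber-style contour bounds. Handling this will require careful monotone couplings in the FK representation together with random-walk-style contour estimates built from $\tau(q)$, so that SW sweeps can be shown not to move the interface by more than its equilibrium scale per step. A secondary subtlety is topological: the periodic N/S identification forces the interface to wind around the cylinder, so the standard Peierls/cluster-expansion arguments of~\cite{DGHMT16,LMMRS91} (set up on a disk) must be adapted to this cylindrical geometry, in particular to control the small but nonzero probability of a second, homologically nontrivial contour appearing.
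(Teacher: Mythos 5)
There is a genuine conceptual gap in the treatment of case (iii), and it is the heart of the problem. Your plan assumes the order--disorder interface is ``horizontally localized to an $O(\sqrt{n\log n})$-wide window around some typical column $x^*$'' and that the strips away from $x^*$ can be handled separately. This conflates the equilibrium picture with the dynamical one. In equilibrium the interface is indeed roughly straight, but a worst-case initial state for the dynamics is all-wired (or all-free), in which the interface starts pinned at the free (resp.\ red) boundary wall. The mixing bottleneck is precisely the process of \emph{transporting} the interface horizontally across the entire cylinder against the surface tension, not equilibrating a pre-existing interface within a fixed thin strip. Your decomposition gives no mechanism for this transport, and your ``random walk for the horizontal position biased by a pinning potential'' heuristic, if taken literally, would suggest the interface stays near one wall rather than traveling $\Theta(n)$. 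This is exactly what the paper's censoring scheme is built to handle: one updates overlapping vertical blocks of width $2\ell$ with $\ell = n^{1/2+\epsilon}$ sequentially from west to east, and each block update advances the interface by one block width only with probability $\exp(-cn^{2\epsilon})$ (the cigar-region large-deviation estimate, Proposition~\ref{prop:cylinder-midpoint-estimate}); one then makes $\exp(n^{1/2+2\epsilon})$ attempted sweeps so that some sweep succeeds w.h.p.\ by a binomial argument. Without this push-and-retry mechanism, the argument does not close.

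Several secondary steps are also unfounded. For cases (i)--(ii), unfolding the cylinder to a $2n\times n$ box does not produce an FK model on that box with any fixed boundary condition (it produces a \emph{symmetry-constrained} measure), so monotonicity does not directly import the $\exp(n^{o(1)})$ bound of~\cite{GL16a}; in fact the paper only claims $\exp(n^{1/2+o(1)})$ for all three cases. The assertion that strips outside $x^*$ ``see effectively homogeneous boundary'' and therefore mix in $\exp(n^{o(1)})$ ignores that the boundary conditions they inherit are random, configuration-dependent, and include the periodic N/S identification; this must be controlled (the paper does so via Proposition~\ref{prop:canonical-paths}, which requires the induced boundary law to dominate ``wired'' or be dominated by ``free'' on each side). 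Finally, you flag the nonlocality of Swendsen--Wang as an obstacle but do not invoke the standard resolution: reduce to FK Glauber dynamics, which is local and monotone, via the Ullrich comparison estimates (Theorem~\ref{thm:Ullrich-comparison} and Fact~\ref{fact:one-color}), and only at the end translate back to SW. All of the paper's interface-pushing machinery lives in the FK Glauber setting.
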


\begin{remark}\label{rem:other-dynamics}
Theorems~\ref{mainthm:1}--\ref{mainthm:3} also hold for the Glauber dynamics for the random cluster (FK) model for $q$ sufficiently large (not necessarily integer). In fact, the proofs proceed by proving the desired result for the FK Glauber dynamics then using a priori estimates comparing $\tmix$ for the FK Glauber dynamics to that of Swendsen--Wang dynamics (cf.~\cite{Ul13,Ul14}) for the corresponding Potts model.  
\end{remark}

\begin{remark}
Theorems~\ref{mainthm:1}--\ref{mainthm:3} showed the dependence of mixing times on boundary conditions for the critical Potts model in the phase coexistence regime. If, instead, one were  interested in the simpler setting of  Glauber dynamics for the 2D Ising model at large  $\beta$ (where the plus and minus phases would assume the role of red and free phases in our theorems), the proofs would carry over and even simplify, via the tools of~\cite{DKS}. 
\end{remark}

\section{Preliminaries}\label{sec:preliminaries}

\subsection{The $q$-state Potts model}\label{sub:potts} In this section we formally introduce relevant facts about the Potts and random cluster models (for further details, see, e.g.,~\cite{Gr04}).

\subsubsection*{The Potts and FK models}
Define the $q$-state Potts model on a graph $G=(V,E)$ as the probability measure $\mu_{G,\beta,q}$ on $\Omega_{\textsc P}=[q]^V:= \{1,...,q\}^V$ as
\[\mu_{G,\beta,q} (\sigma)=\mathcal Z_{\textsc P}^{-1} e^{ \beta \sum_{(i,j)\in E} \boldsymbol 1\{\sigma_i=\sigma_j\}}\,,
\]
where the normalizer $\mathcal Z_{\textsc P}^{-1}$ is the \emph{partition function}.
Define the random cluster (FK) model on a graph $G$ as the probability measure $\pi_{G,p,q}$ on state space $\Omega_{\rc}=\{0,1\}^E$ as
\[\pi_{G,p,q}(\omega) = \mathcal Z^{-1}_{\rc} p^{o(\omega)}(1-p)^{|E|-o(\omega)} q^{k(\omega)}\,,
\]
where $o(\omega)=\sum_{e\in E} \omega(e)$ and $k(\omega)$ is the number of connected components (clusters) in the subgraph of $G$ induced by $\omega$ (we count singletons as their own clusters). We call edges that have $\omega(e)=1$ \emph{open}, or \emph{wired}, and edges that have $\omega(e)=0$, \emph{closed}, or \emph{free}.

\subsubsection*{Potts and FK boundary conditions}
Consider the Potts and FK models on $G=(V,E)$ with boundary $\partial G\subset V$. A Potts boundary condition on $\partial G\subset G$ is an assignment of spin values $\eta \in [q]^{\partial G}$ so that $\mu^\eta_{\beta,q,G}=\mu_{\beta,q,G}(\cdot \mid \sigma\restriction_{\partial G}=\eta)$. 

For a subset $\partial G\subset V$, and FK boundary condition on $\partial G$ is defined as follows: first augment $G$ to a graph $G'= (V, E')$ where $E'$ adds edges between any vertices in $\partial G$ not adjacent in $G$ and let $E'(\partial G)$ be the set of all edges between vertices in $\partial G$; then an assignment $\xi\in \{0,1\}^{E'}$ is an FK boundary condition on $\partial G$. Then $\xi$ can be identified with a partition of the vertices of $\partial G$, where the partition is given by the connected components of $\xi$. The FK measure with boundary conditions $\xi$ is denoted by $\pi^{\xi}_{p,q,G}$ and is given by counting $k(\omega)$ with connections from $\xi$ in mind. 

The red Potts boundary condition is an assignment of $\sigma(i)=1$ to all vertices of $\partial G$ where we always call the first state $\sigma(i)=1$ red, or $R$. The wired FK boundary condition is that in which all of $V(\partial G)$ is in the same boundary component. The free Potts boundary condition corresponds to $\partial G = \emptyset$ while the free FK boundary condition corresponds to the partition of $\partial G$ consisting only of singletons $\{\{v\}:v\in V(\partial G)\}$. The wired/red boundary conditions are \emph{ordered} as they pick out one of the $q$ ordered phases of the Potts model, whereas the free boundary conditions are \emph{disordered} as they pick out the disordered phase, where the $q$ states are symmetric.  

\subsubsection*{Edwards--Sokal coupling}
The Edwards--Sokal coupling \cite{EdSo88} is a coupling of the Potts and FK measures on a graph $G$ that enables us to reduce the study of the $q$-state Potts model, to the study of the FK model at integer $q$.
The joint probability assigned to $(\sigma,\omega)$, where $\sigma\in\Omega_\potts$ is a $q$-state Potts configuration at inverse-temperature $\beta>0$ and $\omega\in\Omega_\rc$ is an FK configuration with parameters $(p=1-e^{-\beta},q)$, is proportional to
\[\prod_{xy\in E(G)}\Big[(1-p)\one\{\omega(xy)=0\} + p\one\{\omega(xy)=1,\sigma(x)=\sigma(y)\}\Big]\,.\]

\subsubsection*{Planar duality}Throughout this paper we are concerned only with the Potts and FK models on planar graphs, and specifically rectangular subsets $\Lambda_{n,m}\subset \mathbb Z^2$ with vertices
\[\Lambda_{n,m}=\llb 0,n\rrb \times \llb 0,m\rrb := \{k\in \mathbb Z: 0\leq k \leq n\}\times \{k\in \mathbb Z: 0\leq k \leq m\}
\]
and nearest-neighbor edges. For a general subgraph $G\subset \mathbb Z^2$, the boundary $\partial G$ will be the set of vertices in $G$ with neighbors in $\mathbb Z^2-G$.  When considering rectangles and other graphs where it makes sense, the southern (bottom) boundary of $\Lambda$ is denoted $\partial_\south \Lambda_{n,m}= \llb 0,n\rrb \times \{0\}$ and $\partial_{\east,\north,\west}$ are analogously defined. Then for rectangles, we observe that $\partial \Lambda_{n,m} = \bigcup_{i\in \{\north,\south,\east,\west\}}\partial_{i}\Lambda_{n,m}$.

A very useful tool in the study of these models when $G$ is planar is the planar duality of the FK model. For a planar graph $G$, let $G^*$ be its dual graph. To every FK configuration $\omega$, we associate the dual configuration $\omega^*$ given by $\omega^*(e^*)=1$ if and only if $\omega(e)=0$ (where $e^*$ is the dual-edge intersecting $e$). A simple calculation yields that at the self-dual point $p_{\sd}(q) = \frac{\sqrt q} {1+\sqrt q}$, for $G\subset \mathbb Z^2$, we have $\pi_{G,p_{\sd},q}(\omega)=\pi_{G^*,p_{\sd},q} (\omega^*)$ .

In the presence of boundary conditions $\xi$ on $G$ whose augmented graph $G'$ is planar, the same holds for the corresponding dual boundary conditions $\xi^*$ on $G^*$; these are determined on a case by case basis via the planarity of the augmented graph, but importantly, the wired and free boundary conditions are dual to each other.

Throughout the paper, for two vertices $x,y$ we will write $x\stackrel{D}\longleftrightarrow y$ if they are in the same component in $\omega\restriction_{D}$; when we include an asterisk, we mean $x$ and $y$ are in the same dual-component, i.e.\ they are in the same component of the dual configuration $\omega^*$.

\subsubsection*{FKG inequality and monotonicity}When $q\geq 1$, the FK model satisfies positive correlation (FKG) inequalities: if $A$ and $B$ are increasing events in the edge configuration, for any boundary condition $\xi$, we have $\pi^\xi_{G,p,q}(A\cap B)\geq \pi^\xi_{G,p,q}(A)\pi^\xi_{G,p,q}(B)$.

This yields monotonicity in boundary conditions at $q\geq 1$: for FK boundary conditions $\xi'\geq \xi$  ($\xi$ is a finer partition than $\xi'$) and every increasing $A$, $\pi^{\xi'}_{G,p,q}(A) \geq \pi_{G,p,q}^\xi(A)$.

\subsubsection*{The domain Markov property} The FK model also satisfies the domain Markov property: for a graph $G$ with boundary conditions $\xi$, and a subgraph $G'\subset G$,  
\begin{align*}
\pi_G^\xi(\omega \restriction_{G'}\in \cdot \mid \omega\restriction_{G-G'} = \eta) \stackrel{d}= \pi_{G'}^{\eta,\xi}
\end{align*}
where $(\eta,\xi)$ is the boundary conditions induced on $G'$ by connections from $\eta$ and $\xi$. 

\subsubsection*{The Potts and FK phase transition} On $\mathbb Z^2$, the FK and Potts models undergo a phase transition from---in the FK setting--existence a.s.\  of an infinite cluster at $p>p_c$ to a.s.\ no infinite cluster at $p<p_c$. In \cite{BeDu12} it was proved that  for all $q\geq 1$, $p_c(q)=p_{\sd} (q)$. This corresponds to a Potts phase transition from a unique infinite-volume Gibbs measure when $\beta<\beta_c$ to $q$ different extremal Gibbs measures corresponding to weak-limits of boundary conditions of the $q$ colors when $\beta>\beta_c$.

While for $q\leq 4$, the phase transition described above is continuous \cite{DST15}, when $q>4$, the phase transition is discontinuous \cite{DGHMT16}: there are two extremal FK Gibbs measures at $p=p_c$ corresponding to the wired and free boundary conditions at infinity, $\pi^1_{\mathbb Z^2}\neq \pi^0_{\mathbb Z^2}$ (resp.\ at $\beta=\beta_c$, $q+1$ extremal Potts Gibbs measures corresponding to the $q$ different colors, along with a disordered phase with free boundary conditions at infinity). As a result, we have the following~\cite{DST15,DGHMT16}: let $q>4$ and $p=p_c$; there exists $c(q)>0$ such that
\begin{align}\label{eq:exp-decay}
\pi^0_{\mathbb Z^2} \left(0\longleftrightarrow \partial (\llb -n,n\rrb ^2)\right) \lesssim e^{-cn}\,.
\end{align}

\subsubsection*{Surface tension}

The order-disorder surface tension will play a large role in both upper and lower bounds studying the effect of boundary conditions on mixing in the phase coexistence regime. In the low-temperature regime, the metastable phases are the $q$ ordered ones and there is a positive order-order surface tension (see,  e.g., the $q=2$ case); this leads to sensitivity of mixing times to boundary conditions in Ising/Potts Glauber dynamics (cf., e.g.,~\cite{Martinelli94,MaTo10}), but not in FK/Swendsen--Wang dynamics (as these are symmetric w.r.t.\ the $q$ ordered phases). At the critical point, the disordered phase is also metastable and induces similar sensitivity to boundary conditions in FK Glauber and Swendsen--Wang dynamics.

Let $\mathcal S_n = \llb 0,n\rrb \times \llb -\infty,\infty\rrb$ and let $(1,0,\phi)$ FK boundary conditions on $\partial \mathcal S_n$ denote those that are wired on $\partial \mathcal S_n \cap \{(x,y)\in \mathbb Z^2:y\geq x \tan \phi\}$ and free elsewhere on $\partial \mathcal S_n$. We will always be taking $\phi \in (-\pi/2, \pi/2)$. 

\begin{definition}\label{def:surface-tension}
The order-disorder surface tension on $\mathcal S_n$ in direction $\phi$ is given by 
\begin{align*}
\tau_{1,0} ( \phi) =   \lim_{n\to\infty} \frac {\cos \phi}{\beta_c n} \log \bigg[\frac{\mathcal Z^{1,0,\phi}_{\mathcal S_n} }{(\mathcal Z^{1}_{\mathcal S_n}\mathcal Z^{0}_{\mathcal S_n})^{1/2}}\bigg]
\end{align*}
(whenever this limit exists) where $\mathcal Z^\xi_{\mathcal S_n}$ denotes the FK partition function on $\mathcal S_n$ with boundary conditions $\xi$ on $\partial \mathcal S_n$. 
\end{definition}
It was proved first in \cite{LMMRS91} that for $q$ sufficiently large, at $\beta=\beta_c$ and $\phi \neq \pm \frac \pi 2$, the surface tension $\tau_{1,0}(\phi)$ exists and satisfies $\tau_{1,0}(\phi)>0$. In \S\ref{sec:surface-tension-estimates}, we will study consequences of positive surface tension for large deviation estimates of FK interfaces (see Def.~\ref{def:fk-interface}).

\subsection{Markov chain mixing times} We introduce the relevant dynamical quantities and techniques in the study of mixing times; for an extensive treatment, see~\cite{LPW17}.

\subsubsection*{Mixing times} Consider a Markov chain with finite state space $\Omega$, reversible w.r.t.\ an invariant measure $\pi$. Define the total variation distance between measures $\mu,\nu$ on $\Omega$ as
\[\|\mu-\nu\|_\tv=\tfrac 12 \|\mu-\nu\|_{\ell_1}=\sup_{A\subset \Omega} |\mu (A)-\nu(A)|\,,
\]
also defined as a coupling distance $\|\mu-\nu\|_{\tv} =\inf\big\{\mathbb P(X_t\neq Y_t):X\sim \mu,Y\sim \nu\big\}$,
where the infimum is over all couplings $(X,Y)$. Then for a discrete time Markov chain with transition kernel $P(\cdot,\cdot)$, 
we define the total variation mixing time of the chain as
\[\tmix = \inf \Big\{t:\max_{x\in \Omega} \|P^t(x,\cdot)-\pi\|_\tv< 1/(2e)\Big\}\,.
\]

\subsubsection*{Spectral gap and Dirichlet form}
One commonly used technique to bound the mixing time of a Markov chain with transition kernel $P$ is to bound the \emph{spectral gap} of $P$. A transition matrix $P$ reversible w.r.t., $\pi$ has largest eigenvalue $1$ by Perron--Frobenius, and has real spectrum. Thus we can enumerate its spectrum $1>\lambda_1\geq\lambda_2\geq...$ and define its spectral gap as $\gap=1-\lambda_1$; the following  relation with $\tmix$ is well known:
\begin{align}\label{eq:gap-tmix}
\gap^{-1} -1\leq \tmix \leq \log(\tfrac{2e}{\pi_{min}}) \gap^{-1}\,,
\end{align}
where $\pi_{min} = \min_{x\in \Omega} \pi (x)$. The variational form of the spectral gap is
\begin{align}\label{eq:dirichlet-form}
\gap= \inf_{f: f\not \equiv 0, \, \mathbb E f= 0} \frac {\mathcal E(f,f)} {\mathbb E_\pi (f^2)}
\end{align}
where the Dirichlet form $\mathcal E(f,f)$ is given by $\sum_{x,y\in \Omega} \pi(x)P(x,y) (f(y)-f(x))^2$.

Though our main results are stated for the Swendsen--Wang dynamics for the Potts model, as remarked earlier, the proofs all analyze instead the mixing time of the Glauber dynamics for the FK model. We formally define both of these dynamics in the sequel.

\subsubsection*{Heat-bath Glauber dynamics}
Discrete time heat-bath Glauber dynamics for the FK model (FK Glauber dynamics) on a finite graph $G$ with boundary conditions $\xi$ is the Markov chain $(X_t)_{t\geq 0}$ defined as follows: at time $t$ it picks an edge $e$ uniformly at random, and resamples $\omega(e)$ via  $\pi_G^\xi(\omega\restriction_{\{e\}}\mid \omega\restriction_{E-\{e\}}=X_{t-1}\restriction_{E-\{e\}})$ to obtain $X_{t+1}$.  Though this paper does not consider it, the heat-bath Potts Glauber dynamics $(X_t')_{t\geq 0}$ is defined similarly, where at each time step a site $v$ is picked uniformly at random and $\sigma(v)$ is resampled according to $\mu_G^{\eta}(\sigma \restriction_{\{v\}} \mid \sigma \restriction_{V-\{v\}} = X'_{t-1} \restriction_{V-\{v\}})$. 

\subsubsection*{Swendsen--Wang dynamics}
Theorems~\ref{mainthm:1}--\ref{mainthm:3} all treat the Swendsen--Wang dynamics~\cite{SW87}, which, for the $q$-state Potts model on a finite graph $G=(V,E)$ at inverse temperature~$\beta$, is the following discrete-time Markov chain. Given that the Markov chain is at state  $\sigma\in [q]^V$ at time $t$, generate state $\sigma'\in [q]^V$ at time $t+1$ as follows.
\begin{enumerate}
\item To $\sigma$, assign an (random) FK configuration $(\omega(e))_{e\in E}$ defined as follows: for every edge $e=(x,y)$, independently, set $\omega(e)$ to be closed with probability $1$ if $\sigma_x\neq \sigma_y$ and with probability $e^{-\beta}$ if $\sigma_x = \sigma_y$.
\item For every connected component of $\omega$,
reassign all the vertices in the cluster, collectively, an i.i.d.\ color in $[q]$, to obtain the new configuration $\sigma'$.
\end{enumerate}
One can check using the Edwards--Sokal coupling of the FK and Potts models, that the Swendsen--Wang dynamics is reversible with respect to $\mu_{G,\beta}$.

\begin{remark}
The definition of the Swendsen--Wang dynamics indicates why it has fast mixing at low-temperature whereas the Potts Glauber dynamics slows down on $(\mathbb Z/n\mathbb Z)^2$ (see Theorem~4 of~\cite{GL16a}). When $\beta>\beta_c$, the metastable states are the $q$ ordered phases, corresponding to the $q$ colors; but step 2) above recolors all FK clusters, and in particular, reassigns the large macroscopic cluster of the ordered phase an i.i.d.\ color, allowing the dynamics to easily jump between the $q$ metastable states. However, as it relies heavily on the FK representation of the model, the order-disorder energy barrier is still hard to overcome when both ordered and disordered phases are metastable.     
\end{remark}

\subsubsection*{Comparison between cluster and Glauber dynamics}
The following  estimates allow us to reduce the analysis of the Swendsen--Wang dynamics to the analysis of FK Glauber dynamics, and demonstrate why our results do not carry over to  Glauber dynamics for the Potts model.

\begin{theorem}[\cite{Ul13,Ul14}]\label{thm:Ullrich-comparison}
Let $q \geq 2$ be integer.
Let $\gap_{\textsc p}$ and $\gap_{\rc}$ be the spectral gaps of discrete-time Glauber dynamics for Potts and FK model on a graph on $m$ edges and maximum degree $\Delta$, resp., and let $\gap_{\textsc {sw}}$
be the
spectral gap of Swendsen--Wang.
Then
\begin{align}
\gap_{\textsc p} &\leq  2q^2 (qe^{2\beta})^{4\Delta}\gap_{\textsc {sw}}\,,\label{eq-ullrich1}\\
\gap_{\rc} &\leq\gap_{\textsc {sw}}\leq 16 \gap_{\rc}\, m\log m \,.\label{eq-ullrich2}
\end{align}
\end{theorem}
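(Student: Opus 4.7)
The plan is to use Dirichlet form comparisons centered on the Edwards--Sokal joint distribution $\nu_{\textsc{es}}$ on $\Omega_\potts \times \Omega_\rc$ as a bridge between the three chains. The key structural observation is that one step of Swendsen--Wang factors as the composition of two heat-bath steps of the block Gibbs sampler for $\nu_{\textsc{es}}$: first resample $\omega$ from its conditional distribution given $\sigma$ (which, by Edwards--Sokal, is a product over edges), then resample $\sigma$ from its conditional distribution given $\omega$ (which recolors each cluster independently). Thus the SW chain is the projection of this joint block-Gibbs chain onto $\Omega_\potts$, and its Dirichlet form admits a natural expression in terms of $\nu_{\textsc{es}}$.

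For the lower bound $\gap_{\rc} \leq \gap_{\textsc{sw}}$, I would lift any mean-zero test function $f$ on $\Omega_\rc$ to the joint space as $\tilde f(\sigma,\omega) = f(\omega)$ and apply the variational identity~\eqref{eq:dirichlet-form}. Since the conditional distribution $\nu_{\textsc{es}}(\omega \mid \sigma)$ factorizes over edges, the ``resample all edges'' step of SW is a product of independent single-edge updates, and a standard tensorization bound shows that one such global step dominates, in Dirichlet form, a single FK Glauber edge update. The upper bound $\gap_{\textsc{sw}} \leq 16\, \gap_{\rc}\, m \log m$ (the main content of~\cite{Ul13}) I would prove via a coupon-collector comparison: after $\Theta(m \log m)$ uniformly chosen single-edge FK Glauber updates, every edge has been refreshed with probability $\geq 1 - 1/m$, so this sequence simulates the SW global edge-update step to within total variation $1/m$. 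A Diaconis--Saloff-Coste comparison of Markov chains then transfers this simulation into the desired Dirichlet form inequality, with the $\log m$ factor arising from coupon collector and the explicit constant $16$ from careful tracking of the coupling error.

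For $\gap_\potts \leq 2q^2 (qe^{2\beta})^{4\Delta} \gap_{\textsc{sw}}$, the approach is a canonical-paths / congestion argument that realizes each Potts single-site Glauber move via a composition of SW moves. The exponential prefactor arises as a worst-case reweighting: with probability at least $q^{-1}(1-p)^\Delta = q^{-1} e^{-\beta \Delta}$, the auxiliary FK configuration sampled in step (1) of SW has all $\Delta$ edges incident to a fixed vertex $v$ closed, and on this event step (2) of SW acts as an independent uniform recoloring of $v$ alone---precisely a Potts single-site heat-bath update. The main obstacle here is the bookkeeping: one must simultaneously control the reweighting by stationary probabilities of the source and target configurations along the canonical path, the $\Delta$-fold product over incident edges in both directions, and the factor of $q$ from the uniform recoloring, in order to arrive at the precise constants $2q^2(qe^{2\beta})^{4\Delta}$. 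Once all three inequalities are established, combined with~\eqref{eq:gap-tmix}, they justify the reduction from Swendsen--Wang mixing to FK Glauber mixing that underpins the proofs of Theorems~\ref{mainthm:1}--\ref{mainthm:3} as claimed in Remark~\ref{rem:other-dynamics}.
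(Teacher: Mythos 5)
This theorem is cited from~\cite{Ul13,Ul14}; the paper does not reproduce Ullrich's proof, so there is no in-paper argument to compare against. Your high-level framework is the right one (Edwards--Sokal as a bridge, SW as the two-block Gibbs sampler on the joint space, with identical nonzero spectra on the Potts and FK marginals by data-augmentation duality), and the kernel of the Potts comparison---that with probability at least $q^{-1}e^{-\beta\Delta}$ one SW step isolates a fixed vertex $v$ and then recolors it uniformly, mimicking a heat-bath move at $v$---is also correct.

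There are, however, two genuine problems. First, the $m\log m$ argument does not work as stated: coupon collecting shows that after $\Theta(m\log m)$ random single-edge FK Glauber updates every edge has been touched, but this does \emph{not} mean the sequence ``simulates the SW global edge-update step to within total variation $1/m$.'' The SW edge step resamples all edges conditionally \emph{independently given the current coloring $\sigma$}; a sequence of Glauber moves resamples each edge given \emph{all the other edges}. These conditional structures are entirely different, and touching every edge does not make the sequential product converge in TV to the product-over-edges-given-$\sigma$ distribution. Ullrich's $\log m$ factor comes instead from a Dirichlet-form comparison of random-scan single-bond dynamics against a ``single-bond update given $\sigma$'' intermediary chain, not from a TV-simulation argument. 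Second, calling the Potts inequality a ``canonical-paths / congestion argument'' mislabels the method: no paths are involved. It is a direct term-by-term Dirichlet-form comparison: for each $f$ and each site $v$, the contribution of the $v$-isolating event to $\mathcal E_{\textsc{sw}}(f,f)$ already dominates a constant multiple of the $v$-term in $\mathcal E_{\textsc p}(f,f)$, and summing over $v$ yields the stated bound. Your ``tensorization'' sketch for $\gap_\rc\le\gap_{\textsc{sw}}$ suffers from the same conditioning mismatch (given $\sigma$ vs.\ given the other edges) and would need to be replaced by the correct intermediary comparison.
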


Theorem~\ref{thm:Ullrich-comparison} implies, in particular, that the mixing time of the FK Glauber dynamics and Swendsen--Wang dynamics on a graph $G$ are comparable up to polynomial factors in the number of vertices.  However, the Potts Glauber dynamics can be much slower than both, e.g., on the $n\times n$ torus when $\beta>\beta_c(q)$.  

\subsubsection*{Potts boundary conditions and mixing}
The comparison estimates above are only valid in the absence of boundary conditions, while the focus of this paper is the influence of various fixed boundary conditions.
In particular, the estimates of Theorem~\ref{thm:Ullrich-comparison} hold immediately for $\Lambda_{n,n}$ with free or periodic (the torus) boundary conditions. For other boundary conditions, we can deform $\Lambda_{n,n}$ as in the following remark; this could, however, distort $\Delta$ by an order $n$ factor, leading to an exponential in $n$ cost in~\eqref{eq-ullrich1}.

\begin{remark}\label{rem:bc-deform}
For any FK boundary condition $\xi$ on $G$, we can define a (not necessarily planar) graph $\tilde G$ by identifying all vertices of every boundary component of $\xi$ with a single vertex in $\tilde G$, and keeping the same edge structure. Then the FK Glauber dynamics on $\tilde G$ with free boundary conditions is the same as that on $G$ with boundary conditions $\xi$. In such a case, let $\partial \tilde G$ denote the set of vertices in $\tilde G$ that arise from the boundary components of $G$.
\end{remark}

\begin{remark}\label{rem:max-degree-prime}
The exponential dependence on $\Delta$ in Eq.~\eqref{eq-ullrich1} can be improved to exponential in the maximum degree of all but one vertex (see~\cite[Theorem~1']{Ul13}), whence $\Delta$ in Eq.~\eqref{eq-ullrich1} can be replaced with the second largest degree of $G$. This implies that Theorem~\ref{mainthm:1} in fact also holds for the Potts Glauber dynamics.
\end{remark}

The following is a consequence of spin symmetry of the Swendsen--Wang dynamics. 

\begin{fact}\label{fact:one-color}
Consider  Swendsen--Wang dynamics on $G$ with Potts boundary conditions $\eta$, by considering the graph $\tilde G$, where boundary vertices of each color are identified as single vertices. Let $\gap_{\textsc{sw},\eta,G}$ be the spectral gap of Swendsen--Wang dynamics on $G$ with b.c., $\eta$, and let $\gap_{\textsc{sw},\eta,\tilde G}$  be the spectral gap of Swendsen--Wang dynamics on $\tilde G$ with $\partial \tilde G$ assigned the colors given by $\eta$. If $\partial \tilde G$ consists of at most one vertex, then
\[\gap_{\textsc{sw},\eta,G}=\gap_{\textsc{sw},\eta,\tilde G}=\gap_{\textsc{sw},0,\tilde G}\,.
\]
\end{fact}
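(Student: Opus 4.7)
The claim packages two equalities; I would prove them separately. The first, $\gap_{\textsc{sw},\eta,G}=\gap_{\textsc{sw},\eta,\tilde G}$, needs no hypothesis on $|\partial\tilde G|$, while the second uses it crucially.

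For the first equality I would exhibit an explicit isomorphism of Markov chains: Potts configurations on $G$ with $\sigma|_{\partial G}=\eta$ correspond canonically to configurations on $\tilde G$ whose contracted boundary vertices carry the color of their class, and under this correspondence the SW transitions agree. The FK sub-step opens each edge independently with probability $p\,\mathbf 1\{\text{same-color endpoints}\}$, and the multi-edges or self-loops created by identifying same-color boundary vertices do not alter the law of cluster connectivity among interior vertices: two boundary vertices of color $c$ in $G$ lie in clusters that are both recolored to~$c$ regardless of whether they are joined by an open FK path, matching the outcome in $\tilde G$ where they have already been identified. Isomorphic Markov chains have equal spectral gaps, and this yields the first equality.

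For the second equality, if $\partial\tilde G=\emptyset$ the two chains coincide. Otherwise, by spin symmetry of SW (invariance of both the Potts measure and the SW kernel under permutations of $[q]$) we may assume $\partial\tilde G=\{v\}$ with $\eta(v)=1$. Write $\Omega_A=[q]^{V(\tilde G)}$, $\Omega_B=\{\sigma\in\Omega_A:\sigma(v)=1\}$, and, for each $c\in[q]$, let $\tau_c=(1\;c)\in S_q$ be the transposition of colors $1$ and $c$ (with $\tau_1=\mathrm{id}$). The key device is the bijection
\[
  \Phi:\Omega_A\longrightarrow[q]\times\Omega_B,\qquad \Phi(\sigma)=\bigl(\sigma(v),\,\tau_{\sigma(v)}\sigma\bigr),
\]
with inverse $(c,\sigma_B)\mapsto\tau_c\sigma_B$, which by color symmetry sends the free-b.c.\ invariant measure to the product $\mathrm{Unif}([q])\otimes\pi_B$, where $\pi_B$ is the invariant measure of the fixed-$v$ chain.

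The main obstacle---and the technical heart of the argument---is to check that $\Phi$ conjugates the free-b.c.\ SW kernel $P_A$ to the \emph{tensor product} $P_{\mathrm{unif}}\otimes P_B$, where $P_{\mathrm{unif}}$ is the uniform-refresh kernel on $[q]$ and $P_B$ is the SW kernel on $\Omega_B$. Two observations drive this: the FK sub-step from $\sigma$ depends only on the color-equivalence relation of $\sigma$, which agrees with that of $\tau_{\sigma(v)}\sigma\in\Omega_B$; and in the recoloring step the color $c'$ of $v$'s cluster is uniform on $[q]$, independent of the i.i.d.\ uniform colors of the remaining clusters, with this joint independence preserved under applying the bijection $\tau_{c'}$. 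Thus $\Phi(\sigma')=(c',\sigma'_B)$ with $c'\sim\mathrm{Unif}([q])$ and $\sigma'_B\sim P_B(\sigma_B,\cdot)$ independently, so $\Phi_*P_A=P_{\mathrm{unif}}\otimes P_B$. Since $P_{\mathrm{unif}}$ is a rank-one kernel (eigenvalues $1,0,\dots,0$) and the SW chain $P_B$ has non-negative spectrum, the tensor-product identity $\gap(P_1\otimes P_2)=\min(\gap(P_1),\gap(P_2))$ gives $\gap_A=\gap_B$, completing the proof.
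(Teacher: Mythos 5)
Your proof is correct and supplies the details that the paper omits (the statement appears as a Fact justified only by ``spin symmetry of the Swendsen--Wang dynamics,'' with no proof given). The two-part split is the right decomposition. For the first equality, your key point is exactly right: with $\eta$ held fixed, any cluster touching a boundary vertex of color $c$ is deterministically recolored to $c$, so it is immaterial whether two same-color boundary vertices happen to lie in the same FK cluster or not, and contracting them is a genuine isomorphism of Markov chains. For the second equality, the color-shift bijection $\Phi(\sigma)=(\sigma(v),\tau_{\sigma(v)}\sigma)$ is exactly the right way to formalize spin symmetry; your verification that the FK sub-step depends on $\sigma$ only through its color partition (which $\tau_{\sigma(v)}$ preserves), and that the recoloring makes the new color of $v$ uniform and independent of the $\tau_{c'}$-shifted colors of the remaining clusters, is the substantive step. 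Your attention to the non-negativity of the SW spectrum is essential rather than cosmetic: with the paper's convention $\gap=1-\lambda_1$, the identity $\gap(P_{\mathrm{unif}}\otimes P_B)=\gap(P_B)$ would fail if $P_B$ had $\lambda_1<0$, since $P_{\mathrm{unif}}$ always contributes the eigenvalue $0$ to the tensor product; it holds here because SW, being a two-stage data-augmentation (Gibbs) sampler through the joint Edwards--Sokal measure, is positive semidefinite on $L^2(\mu)$.
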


Remark~\ref{rem:max-degree-prime} and Fact~\ref{fact:one-color} imply that for $\Lambda_{n,n}$ with FK boundary conditions $\xi$ with at most one nontrivial boundary component, corresponding to Potts b.c., $\eta$ (an assignment of a color, e.g., red, to the nontrivial component of $\xi$ and no other color assignments),
\[\gap_{\rc} \lesssim\gap_{\textsc{sw}}\lesssim n^2 \log n \cdot \gap_{\rc}\,.
\]
Notice that all of the boundary conditions considered in Theorems~\ref{mainthm:1}--\ref{mainthm:3} are of this form. Thus, it suffices to prove Theorems~\ref{mainthm:1}--\ref{mainthm:3} for the FK Glauber dynamics. From now on, when we write $\gap$ with no subscript we mean it to refer to the FK Glauber dynamics. 

\subsubsection*{Monotonicity and the grand coupling}  
A discrete-time Markov chain with state space $\Omega$ and transition kernel $P$ is \emph{monotone} if $\mu P \succeq \nu P$ for every two probability distributions $\mu,\nu$ on $\Omega$ such that $\mu \succeq \nu$; we say that a spin system is monotone whenever single-site Glauber dynamics for it is monotone (cf.~\cite[\S22.3]{LPW17}).

The random mapping representation of the discrete time FK Glauber dynamics on a graph $G=(V,E)$ views the updates as a sequence $(J_i,U_i)_{i\geq 1}$, in which $J_i$'s are i.i.d.\ uniform edges (the updated locations), and the $U_i$'s are i.i.d.\ uniform on $[0,1]$: starting from an initial configuration $\omega_0$, at time $T_i$, writing $J_i=(x,y)$, the dynamics replaces the value of $\omega(J_i)$ by $\one\{U_i\leq p\}$ if $x\longleftrightarrow y$ in $E-\{J_i\}$ and by $\one\{U_i\leq \frac{p}{p+q(1-p)}\}$ otherwise.
The \emph{grand coupling} for FK Glauber dynamics is a coupling of the chains from all initial configurations on $G$ which, via the above random mapping representation, uses the same update sequence $(J_i,U_i)_{i\geq 1}$ for each one of these chains. Using this representation and the FKG inequality, one sees that heat-bath Glauber dynamics for the FK model at $q\geq 1$ is monotone: for every two FK configurations $\omega_1\ge \omega_2$ and every $t\ge 0$, we have $P^t(\omega_1,\cdot)\succeq P^t(\omega_2,\cdot)$.

Thus, for $q\geq 1$, this coupling preserves the partial ordering of the Markov chains started from all possible initial configurations, at all times $t\geq 0$. In particular, under the grand coupling, the value of an edge $e$ in Glauber dynamics at time $t$ from an arbitrary initial state $\omega_0$, is sandwiched between the corresponding values from the free and wired initial states; thus, by a union bound over all edges,
\begin{align}\label{eq:init-config-comparison}
\max_{x\in \Omega} \|P^t (x,\cdot)-\pi\|_\tv \leq \max_{x,y\in\Omega}\|P^t(x,\cdot)-P^t(y,\cdot)\|_{\tv} \leq  |E(\Lambda)|\, \|P^t(1,\cdot)-P^t(0,\cdot)\|_{\tv}\,.
\end{align}

\subsubsection*{Censoring inequalities}

The Peres--Winkler censoring inequalities~\cite{PW13} for monotone spin systems allow one to ``guide'' the dynamics to equilibrium, using that prohibiting updates at various sites for periods of time can only slow down mixing.

\begin{theorem}[{\cite[Theorem~1.1]{PW13}}]\label{thm:censoring}
Let $\mu_T$ be the law of the discrete-time Glauber dynamics at time $T$ for a monotone spin system with state space $\Omega=S^\Lambda$ and stationary distribution $\pi$, whose initial distribution $\mu_0$ is such that $\omega\mapsto\mu_0(\omega)/\pi(\omega)$ is increasing w.r.t.\ the partial order on $\Omega$. Set $0=t_0 < t_1 <\ldots < t_k = T$ for some $k$, let $(\Lambda_i)_{i=1}^k$ be subsets of $\Lambda$, and let $\tilde \mu_T$ be the law at time $T$ of the censored dynamics, started at $\mu_0$, where only updates in $\Lambda_i$ are kept during the times $\{t_{i-1}+1,\ldots,t_i\}$. Then $\|\mu_T-\pi\|_\tv \leq \|\tilde\mu_T-\pi\|_\tv$ and $\mu_T \preceq \tilde\mu_T$; moreover, both $\mu_T/\pi$ and $\tilde\mu_T / \pi$ are increasing.
\end{theorem}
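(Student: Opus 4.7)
The plan is to prove the theorem by induction over the update steps in discrete time, then lift to continuous time by conditioning on the Poisson clock rings on each site. The engine is a one-step lemma showing that heat-bath updates in a monotone system preserve ``FKG-dominant'' initial conditions (those with $\mu/\pi$ increasing), and it is precisely this preservation that makes censoring beneficial.

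The one-step lemma is as follows: let $\nu = f\pi$ with $f$ increasing, and let $P_v$ be the heat-bath update operator at a site $v$. Then (i) $P_v\nu = g\pi$ where $g(\omega) = \E_\pi[f \mid \sigma_{V\setminus\{v\}} = \omega_{V\setminus\{v\}}]$ is again increasing, and (ii) $P_v\nu \preceq \nu$ in stochastic order. Claim (i) follows from stochastic monotonicity of the single-site conditional $\pi(\sigma_v \in \cdot \mid \sigma_{V\setminus\{v\}})$ in the exterior, which is the defining property of a monotone spin system. For claim (ii), for any increasing event $A$,
\[
  \nu(A) - (P_v\nu)(A) \;=\; \E_\pi\bigl[(f-g)\mathbf 1_A\bigr] \;=\; \E_\pi\!\left[\cov_\pi\bigl(f,\mathbf 1_A \mid \sigma_{V\setminus\{v\}}\bigr)\right] \;\geq\; 0,
\]
since conditionally on $\sigma_{V\setminus\{v\}}$ both $f$ and $\mathbf 1_A$ are increasing functions of the single variable $\sigma_v$, and one-dimensional FKG yields non-negative conditional covariance.

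To install the censoring setup, I would couple the censored and uncensored chains through a shared family of independent Poisson clocks on the sites: the uncensored chain processes every ring, while the censored chain ignores rings at sites $v\notin\Lambda_i$ during the block $[t_{i-1},t_i)$. By induction on rings, one then maintains the joint invariant $\mu_t \preceq \tilde\mu_t$ with both $\mu_t/\pi$ and $\tilde\mu_t/\pi$ increasing: shared rings preserve this via the grand monotone coupling of Section~\ref{sub:grand-coupling}, while censored rings, at which only the uncensored chain applies $P_v$, are handled by the one-step lemma (the uncensored descends in stochastic order, both densities remain increasing). The total-variation comparison then follows immediately: setting $A^* = \{d\mu_T/d\pi > 1\}$, which is an up-set,
\[
  \|\mu_T - \pi\|_\tv \;=\; \mu_T(A^*) - \pi(A^*) \;\leq\; \tilde\mu_T(A^*) - \pi(A^*) \;\leq\; \|\tilde\mu_T - \pi\|_\tv,
\]
where the first inequality uses $\mu_T \preceq \tilde\mu_T$ applied to the increasing set $A^*$.

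The main obstacle is the one-step lemma itself; both parts hinge on the full strength of monotonicity of the spin system (stochastic monotonicity of single-site conditionals and preservation of increasing functions under $P_v$), which for the FK model at $q\geq 1$ is the content of the grand coupling of Section~\ref{sub:grand-coupling}. Once the lemma is in place, the reduction from continuous to discrete time (condition on the sequence of clock rings, apply the discrete result, and take expectations) and the inductive bookkeeping over the blocks $[t_{i-1},t_i)$ are routine.
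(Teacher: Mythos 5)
Your proposal is correct and follows the standard Peres--Winkler route, which is exactly what the paper relies on (it cites \cite{PW13} and \cite[Theorem~2.5]{MaTo10} rather than reproving the result). The one-step lemma you isolate---that a heat-bath update at a single site both preserves the class of distributions with increasing density against $\pi$, via stochastic monotonicity of the single-site conditional, and pushes the law down in stochastic order, via the conditional-covariance/FKG computation---is precisely the engine of the original proof, and your reduction from continuous to discrete time by conditioning on the clock process and the inductive bookkeeping over censored versus shared rings are the same as in \cite{PW13}.
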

Theorem~\ref{thm:censoring} implies in particular that if we consider two chains---the standard FK Glauber dynamics $X_t\sim P^t(1,\cdot)$, and the modified dynamics $\tilde X_t \sim \tilde P^t(1,\cdot)$ which at certain times sets all bonds in predetermined subsets of $G$ to be wired---then $\tilde P^t(1,\cdot)$ is always farther from $\pi$ and from $P^t(0,\cdot)$ in total-variation than $P^t(1,\cdot)$ is.

\section{Consequences of positive surface tension}\label{sec:surface-tension-estimates}

\subsection{Cluster expansion}\label{subsec:cluster-expansion}
We first introduce the cluster expansion framework used to prove that the FK order-disorder surface tension $\tau_{1,0}(\phi)$ as defined in Definition~\ref{def:surface-tension} is positive for all large $q$ at $p=p_c(q)$. We skip many of the details here as understanding them is not necessary to the equilibrium estimates we require (\S\ref{subsec:surface-tension-estimates}). This approach was extensively developed in \cite{DKS} for the low-temperature Ising model when $\beta \gg\beta_c$, and then extended to the critical Potts/FK model for large $q$ (at the discontinuous phase transition point) in~\cite{MMRS91} where one can find more details on the below. Though these cluster expansion techniques only go through at $p=p_c$ when $q$ is sufficiently large, all of the interface estimates are expected to hold at $p_c$ whenever $\tau_{1,0}(\phi)>0$, so in particular, for \emph{every} $q>4$.

In what follows, two (primal) edges $e,f$ are adjacent if they share a vertex. Two primal edges $e,f$ are co-adjacent if $e^\star$ is adjacent to $f^\star$. Connectedness and co-connectedness are defined naturally with respect to adjacency and co-adjacency. 
For an edge subset $A\subset E(\mathbb Z^2)$, the (edge)-\emph{boundary of $A$} is the set of all edges in $A$ that are co-adjacent to $E(\mathbb Z^2) \setminus A$. The \emph{co-boundary of $A$} is the set of edges in $E(\mathbb Z^2)\setminus A$ that are adjacent to $A$.    

\begin{definition}\label{def:fk-interface}
Let $D$ be a connected subgraph of $\mathbb Z^2$ and let $a,b$ be two marked boundary points on $\partial D$. Consider Dobrushin boundary conditions that are wired on the clockwise segment $(a,b)$ and free on $(b,a)$ (where if $D$ is infinite, simply connected, we define these boundary arcs in the natural way). Then for an FK realization $\omega$ on $D$, the \emph{primal-FK interface} (or simply the FK interface), $\mathcal I= \mathcal I(\omega)$, is defined as follows: 
\begin{enumerate}
\item Consider the dual-component of the boundary arc $(b,a)$ in $\omega$, and consider its co-boundary (a set of closed dual-edges in $\omega$). 
\item This co-boundary has a unique co-connected component, call it $\Gamma^\star(\omega)$, that is incident to the boundary arc $(a,b)$. The primal-FK interface $\mathcal I$ is the set of all primal edges that are dual to edges in $\Gamma^\star (\omega)$.  
\end{enumerate}
(Notice that this interface is not necessarily a simple path, but it is connected, and will have no cycles.)
 Define the \emph{dual-FK interface} analogously as follows: consider the connected component of open edges touching the boundary arc $(a,b)$ and consider its co-boundary (a set of closed primal edges). This co-boundary has a unique co-connected component that is incident the boundary arc $(b,a)$; the set of dual-edges that are dual to edges in this co-connected component will be the dual-FK interface.  For more general boundary conditions, there will be a compatible collection of interfaces between all boundary segments that are wired.
\end{definition}

Recall that we defined the infinite strip $\mathcal S_n = \llb 0,n\rrb \times \llb -\infty,\infty\rrb$. 

\begin{definition}For any angle $\phi\in (-\frac \pi 2, \frac \pi 2 )$, an \emph{edge-cluster weight function} $\Phi(\mathcal C,\mathcal I)$ is any real-valued function with first argument that is a connected set of edges in $\mathcal S_n$ and second argument that is a possible realization of an FK interface with Dobrushin boundary conditions that are wired on the clockwise arc between  $(0,0)$ to $(n,\lfloor n\tan \phi \rfloor)$, such that for some $\lambda>0$ and every $\mathcal C, \mathcal I$, 
\begin{enumerate}
\item $\Phi(\mathcal C,\mathcal I)=0$ when $\mathcal C\cap \mathcal I=\emptyset$\,,
\item $\Phi(\mathcal C,\mathcal I_1)=\Phi(\mathcal C,\mathcal I_2)$ when $\Pi_{\mathcal C} \cap \mathcal I_1=\Pi_{\mathcal C}\cap \mathcal I_2$\,,
\item $\Phi(\mathcal C,\mathcal I)=\Phi(\cC+(0,s),\mathcal I_1)$ when $\mathcal I_1\cap \Pi_{\mathcal C}=(0,s)+\mathcal I\cap \Pi_{\mathcal C}$\,,
\item $|\Phi(\mathcal C,\mathcal I)|\leq \exp(-\lambda  \ell(\mathcal C))$\,,
\end{enumerate}
 where $$\Pi_{\mathcal C}=\{(x,y)\in \mathbb R^2:\exists y' \mbox{ s.t.\ }(x,y')\in \mathcal C\}\,,$$ and $\ell(\mathcal C)$ is the minimum number of edges in a connected subset of $E(\Z^2)$ that contains all the boundary edges of $\mathcal C$. 
More generally, for an edge-cluster weight function $\Phi$ and a domain $V_n$ with Dobrushin boundary conditions between $(0,0)$ and $(n,n\lfloor \tan \phi \rfloor)$, its partition function is given by
\begin{align}\label{eq:cluster-weight-function}
\cZ_\Phi = \mathcal Z_\Phi(V_n, \phi) = \sum_{\mathcal I} \lambda^{|\mathcal I|+\sum_{\mathcal C: \mathcal C\cap \mathcal I = \emptyset} \Phi(\mathcal C, \mathcal I)}\,,
\end{align}
where the sum runs over all possible FK order-disorder interfaces in $V_n$. 

There exists a unique edge-cluster weight function $\Phi_{\textsc{o/d}}$---called the \emph{FK order-disorder weight function}---such that the probability that the FK interface in $\cS_n$ under the $(1,0,\phi)$ boundary condition is $\cI$ is given by
\[ \pi_{\cS_n}^{(1,0,\phi)}(\cI) = \cZ_{\Phi_{\textsc{o/d}}}^{-1}\, \lambda^{|\mathcal I| + \sum_{\mathcal C:\mathcal C\cap \mathcal I \neq \emptyset} \Phi_{\textsc{o/d}}(\mathcal C,\mathcal I)}\,.\]
The FK order-disorder weight function $\Phi_{\textsc{o/d}}$ is made explicit in~\cite[Proposition~5]{MMRS91}. 
\end{definition}

By adapting the methods of~\cite{DKS} to the FK cluster expansion, the following large deviation estimate on order-disorder interface fluctuations was obtained in~\cite{MMRS91}.

\begin{proposition}[{\cite[Proposition 5]{MMRS91}}] \label{prop:fk-415} Consider the critical FK model on $\mathcal S_n$ and fix a $\delta>0$. There exist some $q_0$ and $c(\delta)>0$ such that for all $\phi\in [-\frac \pi 2+\delta ,\frac \pi 2-\delta ]$ and every $q\geq q_0$,  every $h\geq 1$ and $n\geq 1$,
\[\pi_{\mathcal S_n}^{1,0,\phi}\big(\mathcal I\not\subset \{(x,y):y\in [  x \tan \phi -h, x\tan \phi + h] \}\big)\lesssim n^2 \exp(-ch^2/n)\,.
\]
\end{proposition}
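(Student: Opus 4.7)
The plan is to implement a Dobrushin--Koteck\'y--Shlosman-type skeleton argument within the cluster-expansion framework introduced in \S\ref{subsec:cluster-expansion}, using the explicit representation $\pi_{\mathcal S_n}^{1,0,\phi}(\mathcal I)=\lambda^{|\mathcal I|+\sum_{\mathcal C:\mathcal C\cap \mathcal I\neq\emptyset}\Phi(\mathcal C,\mathcal I)}$ and property~(iv) (exponential decay of the weights $\Phi$ in $d(\mathcal C)$) to reduce the tube estimate to a large deviation bound governed purely by the order-disorder surface tension $\tau_{1,0}$. The bad event decomposes as a union over vertices $v=(x,y)\in \mathcal S_n$ with $|y-x\tan\phi|\geq h$ of the event $\{v\in \mathcal I\}$; since the interface crosses each integer abscissa in $\llb 0,n\rrb$, it suffices to bound $\pi_{\mathcal S_n}^{1,0,\phi}(v\in\mathcal I)$ by $e^{-c h^2/n}$ up to a polynomial prefactor and union-bound over the $O(n^2)$ candidate vertices.

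For a fixed $v$, I would bound $\pi_{\mathcal S_n}^{1,0,\phi}(v\in \mathcal I)$ by a ratio of pinned partition functions, the numerator constraining the interface to pass through $v$. The exponential decay in property~(iv) provides a finite-range decoupling so that, up to a subexponential correction absorbed into the polynomial, the numerator factorizes as a product of a $(0,0)\to v$ piece and a $v\to(n,\lfloor n\tan\phi\rfloor)$ piece, each pinned. Writing $\sigma(\phi):=\tau_{1,0}(\phi)/\cos\phi$ and parametrizing $v=(x,\,x\tan\phi+t)$ with $|t|\geq h$, the definition of surface tension (Definition~\ref{def:surface-tension}) applied to all three partition functions then yields
\begin{equation*}
\pi_{\mathcal S_n}^{1,0,\phi}(v\in \mathcal I)\;\lesssim\;\mathrm{poly}(n)\cdot \exp\!\Bigl(-\beta_c\bigl[x\,\sigma(\alpha_1)+(n-x)\,\sigma(\alpha_2)-n\,\sigma(\phi)\bigr]\Bigr),
\end{equation*}
where $\alpha_1,\alpha_2$ are the slopes of the two segments ($\tan\alpha_1=(x\tan\phi+t)/x$, analogously for $\alpha_2$). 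Strict convexity of $\sigma$ on $[-\tfrac{\pi}{2}+\delta,\tfrac{\pi}{2}-\delta]$, which is itself a by-product of the cluster expansion at large $q$ (the relevant generating function has uniformly positive second derivative on this interval), then yields via Taylor expansion a quadratic deficit $\geq c\,t^2(\tfrac{1}{x}+\tfrac{1}{n-x})\geq c\,t^2/n$. Plugging in $|t|\geq h$ gives the per-vertex bound $e^{-ch^2/n}$, and summing over the $O(n^2)$ vertices in the strip whose horizontal projection lies in $\llb 0,n\rrb$ and whose vertical distance to the line $y=x\tan\phi$ is at least $h$ yields the advertised bound.

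The main technical obstacle is justifying the factorization step cleanly: conditioning the interface to pass through a microscopic vertex $v$ must produce, up to a polynomial correction, the product of two independent pinned interface measures, despite the non-local decorations $\Phi(\mathcal C,\mathcal I)$ that may straddle $v$. This is the FK analogue of the irreducible-animal decomposition of \cite{DKS} and is handled by isolating the decorations whose support crosses the vertical line through $v$ using property~(iv), and reconstructing the two halves with compatible pinned boundary data; the exponential decay in $d(\mathcal C)$ makes the total cost of this reconstruction subexponential in $n$, which is what permits absorbing it into the polynomial prefactor. Once this decoupling is in hand, the surface-tension comparison and convexity-based Taylor expansion above are routine.
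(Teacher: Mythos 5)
The paper does not give its own proof of Proposition~\ref{prop:fk-415}: it is quoted directly from~\cite[Proposition 5]{MMRS91}, which is an adaptation to the FK setting of the Dobrushin--Koteck\'y--Shlosman skeleton estimate for low-temperature Ising interfaces. Your sketch is precisely the structure of that argument, so there is no conflict of approach to report; what follows are two points worth being aware of.

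First, the statement ``union-bound over the $O(n^2)$ candidate vertices'' is imprecise, since the strip $\mathcal S_n$ is vertically infinite. The union bound still closes because the per-vertex estimate decays like $\exp(-ct^2/n)$ in the height $t$: for $t\ge h$ one has $t^2\ge h^2+h(t-h)$, so $\sum_{t\ge h}e^{-ct^2/n}\lesssim (n/h)\,e^{-ch^2/n}$, and multiplying by the $n+1$ abscissae gives the claimed $n^2 e^{-ch^2/n}$. Relatedly, the convexity/Taylor step requires the two slopes $\alpha_1,\alpha_2$ to stay in a compact subinterval of $(-\tfrac\pi2,\tfrac\pi2)$, which only holds for $|t|\lesssim\min(x,n-x)$; for larger $|t|$ one instead uses the crude bound that the interface has length $\gtrsim|t|$ and hence cost $\gtrsim|t|\gtrsim h^2/n$, which subsumes into the same estimate. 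These regime splits are standard in~\cite{DKS} but should be named if one were writing out the proof.

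Second, the two steps you flag as ``routine'' --- strict convexity of $\sigma(\phi)=\tau_{1,0}(\phi)/\cos\phi$ on $[-\tfrac\pi2+\delta,\tfrac\pi2-\delta]$, and the factorization of the pinned partition function into two independent pinned halves modulo the non-local decorations $\Phi(\mathcal C,\cdot)$ --- are in fact the bulk of the work in~\cite{DKS,MMRS91}. The factorization in particular is \emph{not} finite-range in the naive sense: property~(iv) only gives exponential decay of individual cluster weights, and one must carry out the irreducible decomposition of the decorated interface into cone points and control, via a separate cluster-expansion estimate, the sum over all ways the long-range decorations can straddle a putative cone point. You correctly identify this as the main obstacle, so the sketch is honest about where the difficulty lies; but the proof of Proposition~\ref{prop:fk-415} cannot be reconstructed from the properties of $\Phi$ listed in this paper alone, which is exactly why the paper defers to~\cite{MMRS91}.
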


The following is a finer result, that reformulates results of \cite{DKS} in the FK setting.
Define the \emph{cigar-shaped region} $U_{\kappa,d,\phi}$ for every $d>0$ and $\kappa>0$ by
\begin{equation}\label{eq:cigar-shaped-region}
U_{\kappa,d,\phi}=\mathcal S_n\cap \left\{ (x,y)\in \mathbb Z^2:|y-x\tan \phi| \leq d\left|\tfrac {x(n-x)}n\right|^{\frac 12+\kappa}\right\}\,,
\end{equation}

\begin{proposition} \label{prop:fk-416} Consider the critical FK model on a domain $V_n\supset U_{\kappa,d,\phi}$, let $\Phi_{\textsc{o/d}}$ be the FK order-disorder weight function,  and let $\tilde \Phi$ be any function satisfying
\[\tilde \Phi(\mathcal C,\mathcal I)=\Phi_{\textsc{o/d}}(\mathcal C,\mathcal I) \mbox{ when } \mathcal C\subset U_{\kappa,d,\phi}\,, \qquad | \tilde \Phi(\mathcal C,\mathcal I)|\leq \exp(-\lambda \ell(\mathcal C)) \qquad \mbox{ for every $\mathcal C, \mathcal I$}
\] (e.g., $\tilde \Phi=\Phi_{\textsc{o/d}} \one\{\cC\subset U_{\kappa,d,\phi}\}$). There exist  $q_0>0$ and $f(\kappa)=O(\kappa^{-1})$ such that for all $q\geq q_0$, all $\phi \in [-\frac{\pi}2 +\delta, \frac{\pi}{2}-\delta]$, there exists $C(q,\delta)>0$, such that
\begin{align}\label{eq:domain-comparison}
|\log {\mathcal Z}_{\tilde \Phi}(V_n,\phi)-\log \mathcal Z_{\Phi_{\textsc{o/d}}} (\mathcal S_n,\phi)|\leq C(\log n)^{f(\kappa)}\,.
\end{align}
Moreover, the order-disorder surface tension $\tau_{1,0}(\phi)$ (given in Def.~\ref{def:surface-tension}) satisfies
\begin{align}\label{eq:surface-tension}
|\log { {\mathcal Z}}_{\tilde \Phi} (V_n,\phi)-n\log (1+\sqrt q) (\cos\phi)^{-1}\tau_{1,0}(\phi)|\leq  C(\log n)^{f(\kappa)}\,,
\end{align}
and the large deviation estimate of Proposition~\ref{prop:fk-415} holds for $V_n$ and $d/2$.
\end{proposition}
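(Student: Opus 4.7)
The plan is to adapt the cluster-expansion arguments of \cite{DKS} and \cite{MMRS91} from the bulk strip to the cigar-shaped domain $\bar V_n$ by a direct comparison of the two interface ensembles. Writing both partition functions in cluster-expanded form,
\begin{align*}
\mathcal Z(\mathcal S_n, \phi) = \sum_{\mathcal I \subset \mathcal S_n} \lambda^{|\mathcal I|} \exp\Bigl(\sum_{\mathcal C \cap \mathcal I \neq \emptyset} \Phi(\mathcal C, \mathcal I)\Bigr), \qquad \tilde{\mathcal Z}(\bar V_n, \phi) = \sum_{\mathcal I \subset \bar V_n} \lambda^{|\mathcal I|} \exp\Bigl(\sum_{\mathcal C \cap \mathcal I \neq \emptyset} \tilde \Phi(\mathcal C, \mathcal I)\Bigr),
\end{align*}
the hypothesis that $\tilde \Phi = \Phi$ on clusters contained in $U_{\kappa,d,\phi}$ lets me match contributions from interfaces and clusters inside the cigar, while the decay estimate (4) handles the rest.

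First I would restrict both sums to the event $\{\mathcal I \subset U_{\kappa,d/2,\phi}\}$. Proposition \ref{prop:fk-415}, applied on sub-strips of lengths doubling dyadically from each endpoint of the interface, yields that $\mathcal I$ stays inside the narrower cigar with total error at most $\sum_{j} 2^j \exp(-c 2^{2j\kappa})$, faster than any fixed inverse polylog power. Since property (4) forces $\tilde\Phi$-cluster weights to obey the same entropic bounds, the same conclusion holds for the $\tilde{\mathcal Z}(\bar V_n, \phi)$ ensemble, so restricting to typical interfaces changes each partition function by a multiplicative factor of $\exp(O((\log n)^{f(\kappa)}))$.

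Next, for $\mathcal I \subset U_{\kappa,d/2,\phi}$, the cluster sums $\sum_{\mathcal C \cap \mathcal I \neq \emptyset} \Phi(\mathcal C, \mathcal I)$ and $\sum \tilde \Phi(\mathcal C, \mathcal I)$ differ only on clusters escaping $U_{\kappa,d,\phi}$. Any such $\mathcal C$ adjacent to $\mathcal I$ satisfies $d(\mathcal C) \gtrsim |x(n-x)/n|^{1/2+\kappa}$ for some $x$ in its projection. Combining (4) with the standard Peierls enumeration (at most $C^k$ connected edge-sets of perimeter $k$), the total weight summed over $\mathcal I$ and $x$ is bounded by a dyadic sum $\sum_j 2^j \exp(-c 2^{j(1/2+\kappa)})$, which yields $(\log n)^{f(\kappa)}$ with $f(\kappa) \lesssim \kappa^{-1}$. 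This establishes \eqref{eq:domain-comparison}. Equation \eqref{eq:surface-tension} then follows by combining \eqref{eq:domain-comparison} with the known strip asymptotic $\log \mathcal Z(\mathcal S_n, \phi) = n \beta_c (\cos\phi)^{-1} \tau_{1,0}(\phi) + O((\log n)^{f(\kappa)})$, itself a direct consequence of Definition \ref{def:surface-tension} and cluster expansion, using $\beta_c = \log(1+\sqrt q)$ at the self-dual point. Finally, the large-deviation estimate on $\bar V_n$ with parameter $d/2$ follows by writing $\pi_{\bar V_n}^{1,0,\phi}(\mathcal I \not\subset W_h)$ as a ratio of restricted partition functions: the above comparison contributes only $\exp(O((\log n)^{f(\kappa)}))$ corrections, which are dominated by $n^2 e^{-ch^2/n}$ from Proposition \ref{prop:fk-415} whenever $h \geq 1$.

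The main obstacle is the cluster-entropy estimate near the endpoints of the cigar. There, the width $|x(n-x)/n|^{1/2+\kappa}$ shrinks to $O(1)$ as $x\to 0$ or $x\to n$, so the exponential decay in (4) barely overcomes the entropy of connected clusters at those scales; the dyadic sum contributes $(\log n)^{f(\kappa)}$ precisely from scales $j$ with $2^{j(1/2+\kappa)} \lesssim \log n$, giving $f(\kappa) \lesssim \kappa^{-1}$, and this polylog growth is essentially sharp under this approach.
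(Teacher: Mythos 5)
The paper does not actually prove Proposition~\ref{prop:fk-416}; it is presented as a direct reformulation of results in \cite{DKS} (and their FK adaptation in \cite{MMRS91}), and is cited as such. Your proposal is therefore being compared against a black-box citation, and the relevant question is whether your cluster-expansion sketch is a sound unpacking of the cited result. The high-level architecture is right: write both $\log\mathcal Z$ and $\log\tilde{\mathcal Z}$ via the cluster expansion over interfaces, use Proposition~\ref{prop:fk-415} to restrict to interfaces confined to a narrower cigar, and compare the two expansions cluster by cluster inside $U_{\kappa,d,\phi}$, bounding the mismatch via the $\tilde\Phi(\mathcal C,\mathcal I)\le\exp(-\lambda d(\mathcal C))$ decay and a Peierls count. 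This is indeed the skeleton of \cite[Theorem~4.16]{DKS}.

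However, your derivation of the $(\log n)^{f(\kappa)}$ error with $f(\kappa)\lesssim\kappa^{-1}$ does not hold up. The dyadic sum you write, $\sum_j 2^j\exp(-c\,2^{j(1/2+\kappa)})$, converges to a constant independent of $n$, so on its face it produces an $O(1)$ error, not a polylogarithmic one. Your final paragraph tries to salvage this by isolating the scales $j$ with $2^{j(1/2+\kappa)}\lesssim\log n$, but summing $2^j$ over such $j$ yields $(\log n)^{2/(1+2\kappa)}$, which is bounded uniformly in $\kappa$ rather than growing like $\kappa^{-1}$. The actual $\kappa^{-1}$ power in \cite{DKS} arises from a multiscale telescoping of the cigar near the pinning points that is not captured by a single layered Peierls sum; the argument has to be iterated across the $O(\log n)$ dyadic scales with a $\kappa$-dependent loss at each step, and that iteration is missing here. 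Similarly, your claim that the $\exp(O((\log n)^{f(\kappa)}))$ multiplicative correction in the large deviation estimate ``is dominated by $n^2 e^{-ch^2/n}$ whenever $h\ge 1$'' is false: since $f(\kappa)>1$, the factor $\exp((\log n)^{f(\kappa)})$ is superpolynomial in $n$, so domination requires $h^2/n\gtrsim(\log n)^{f(\kappa)}$ (which is automatic in the paper's applications where $h\sim n^{1/2+\epsilon}$, but is far stronger than $h\ge 1$). In short, the shape of the argument is the right one, but the quantitative control of the polylog error — which is precisely what makes the cited theorem nontrivial — is not substantiated and in places contradicts itself.
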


\subsection{Surface tension estimates on subsets of $\Lambda_{n,n}$}\label{subsec:surface-tension-estimates}
In this section, we extend the surface tension estimate of Proposition~\ref{prop:fk-415} to tilted half-infinite and finite strips. We first define subsets of $\mathcal S_n = \llb 0,n\rrb \times \llb -\infty, \infty\rrb$ we consider in obtaining the desired sub-exponential upper bounds of Theorems~\ref{mainthm:1}--\ref{mainthm:2} (see Figure~\ref{fig:subsets}). 

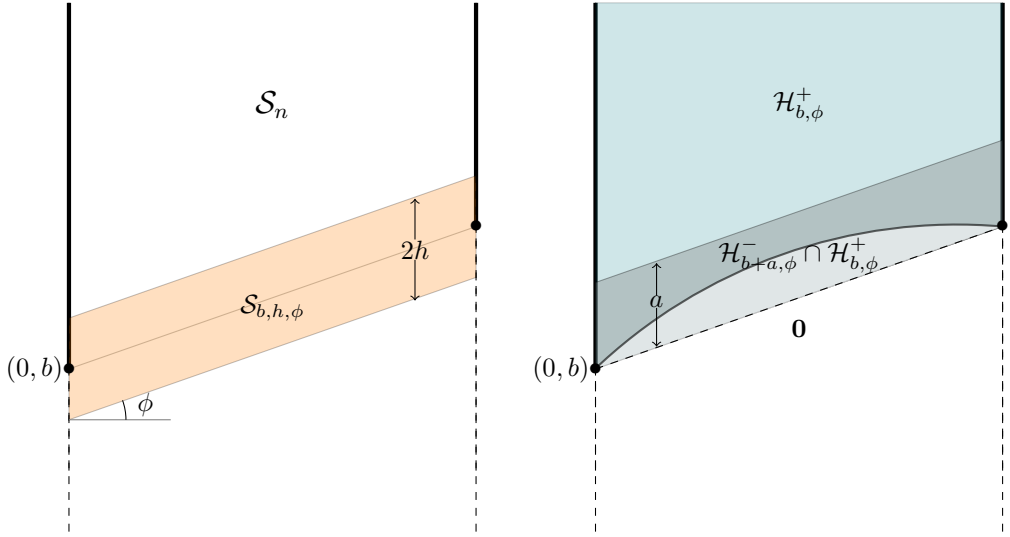
\begin{figure}
  \begin{tikzpicture}
    \node (plot1) at (0,0) {};

    \node (plot2) at (7,0) {};

    \begin{scope}[shift={(plot1.south west)},x={(plot1.south east)},y={(plot1.north west)}, font=\small]

\draw[color=black, dashed] (0,0) -- (0,23);
\draw[color=black, dashed] (20,0)--(20,23);
\draw[draw=black, ultra thick] (0,5) -- (0,23);
\draw[draw=black, ultra thick] (20,23)--(20,12);

\draw[draw=lightgray] (0,5)--(20,12);
\draw[fill=orange, opacity=.25] (0,5)--(0,7.5)--(20,14.5)--(20,9.5)--(0,2.5)--(0,5);

\draw[color=black, dashed] (20,-3)--(20,23);

\draw[color=black, dashed] (0,-3) --(0, 23);

\draw[color=gray] (0,2.5)--(5,2.5);
\node[font=\small] at (3.75,3.25) {$\phi$};

\draw[<-] (17,8.365)--(17,10.365); \node[font=\small] at (17,10.865) {$2h$}; \draw[->] (17, 11.365)--(17,13.365);

    \node[font=\small] at (-1.7,5) {$(0,b)$};  
    \node[font=\small] at (10,8) {$\mathcal S_{b,h,\phi}$};
        \node[font=\large] at (10,18) {$\mathcal S_n$};
        
        \draw (2.8, 2.5) arc (0:20:2.8);

     \node[font=\small] at (0,5) {$\bullet$};
     \node[font= \small] at (20,12) {$\bullet$};

    \end{scope}

    \begin{scope}[shift={(plot2.south west)},x={(plot2.south east)},y={(plot2.north west)}, font=\small]

\draw[color=black, dashed] (0,0) -- (0,23);
\draw[color=black, dashed] (20,0)--(20,23);
\draw[draw=black, ultra thick] (0,5) -- (0,23);
\draw[draw=black, ultra thick] (20,23)--(20,12);

\draw[draw=lightgray] (0,5)--(20,12);

\draw[color=black, dashed] (20,-3)--(20,23);

\draw[color=black, dashed] (0,-3) --(0, 23);

    \node[font=\small] at (-1.7,5) {$(0,b)$};

\draw[fill=green, opacity=.1] (0,5)--(0,23)--(20,23)--(20,12)--(0,5);
\draw[fill=blue, opacity=.1] (0,5)--(0,23)--(20,23)--(20,12)--(0,5);

\draw[fill=gray, opacity=.35] (0,5)--(0,9.25)--(20,16.25)--(20,12)--(0,5);

\node[font=\small] at (10,7) {$\mathbf{0}$};

\draw[ color=black, thick, fill=white, opacity=.6] (20,12) arc (85:134:25.8) ;

\node[font=\small] at (10,18) {$\mathcal H_{b,\phi}^+ $};

\node[font=\small] at (10,10.6) {$\mathcal H_{b+a,\phi}^- \cap \mathcal H_{b,\phi}^+ $};
\draw[<-] (3,6.1)--(3,8); \node[font=\small] at (3,8.3) {$a$}; \draw[->] (3, 8.6)--(3,10.2);

     \node[font=\small] at (0,5) {$\bullet$};
     \node[font= \small] at (20,12) {$\bullet$};
     
     \draw[color=black, dashed] (0,5)--(20,12);

    \end{scope}
  \end{tikzpicture}
  \caption{An illustration of some of the sets and interfaces considered in Definition~\ref{def:strips} and Proposition~\ref{prop:surface-tension}. The $(1,0,\phi)$ boundary conditions are depicted with bold lines being wired and dashed lines being free.}\label{fig:subsets}
  \end{figure}

\begin{definition}\label{def:strips}
For every $n$, which will be understood by context and omitted from the following notation, define a tilted strip  $\mathcal S_{b,h,\phi}$ as (for $b\in \mathbb R,h\in \mathbb R_+$, $\phi\in (-\frac \pi 2,\frac \pi 2)$),
\[\mathcal S_{b,h,\phi}=\{(x,y)\in \mathcal S_n: y\in \llb b-h+x \tan \phi, b+h+x \tan \phi \rrb\}\,.
\]
Also, define the half-infinite strips,
\begin{align*}
\mathcal H^+_{b,\phi} = \{(x,y)\in \mathcal S_n: y\geq b+x \tan \phi\}\,, \qquad \mathcal H^-_{b,\phi} = \{(x,y)\in \mathcal S_n:y\leq b+x \tan \phi\}\,.
\end{align*}
\end{definition}

The following estimate---extending an estimate of \cite{MMRS91}, which in turn adapts~\cite{DKS} to the FK model, to half-infinite strips---is a consequence of monotonicity of the FK model and Proposition~\ref{prop:fk-415}. 

\begin{proposition}\label{prop:surface-tension}
Fix  $\delta>0$ and let $q$ be sufficiently large. For any $b\in \mathbb R$ and $\phi\in [-\frac \pi 2 +\delta,\frac \pi 2 -\delta]$,  consider the critical FK model on $\mathcal H^+_{b,\phi}\subset \mathcal S_n$ with boundary conditions $(1,0,b,\phi)$ denoting wired boundary conditions on $\partial \mathcal H^+_{b,\phi} \cap \mathcal H^+_{b+1,\phi}$  and free boundary elsewhere on $\partial \mathcal H^+_{b,\phi}$. There exist constants $A>0$ and $c(\delta,q)>0$ such that the order-disorder interface $\mathcal I$, satisfies
\[\pi^{1,0,b,\phi}_{\mathcal H^+_{b,\phi}} (\mathcal I\not\subset \mathcal H_{b+a,\phi}^- \cap \mathcal H^+_{b,\phi})\leq A n^2\exp\left(-c a^2 / n\right)\,.
\]
\end{proposition}

\begin{proof}The proposition was proven in the case $\phi=0$ in Proposition 4.2 of \cite{GL16a} combining monotonicity of the FK model in boundary conditions with Proposition~\ref{prop:fk-415}.  The same proof carries over to the case $\phi\neq 0$ as long as $\phi$ is uniformly bounded away from $\pm \frac \pi 2$ as the surface tension estimate of Proposition~\ref{prop:fk-415} on $\mathcal S_n$ is expressed in that setup.
\end{proof}

The following is the main equilibrium estimate we will use in the proofs of sub-exponential mixing for general Dobrushin boundary conditions. 

\begin{proposition}\label{prop:strip-surface-tension}
Fix  $\delta>0$ and let $q$ be sufficiently large. For any $b\in \mathbb R$, $h\leq m\leq n$, and $\phi\in [-\frac \pi 2+\delta,\frac \pi 2 -\delta]$, consider the critical FK model on $\mathcal S_{b,m,\phi}\cap \Lambda_{n,n}$ with $(1,0,b,\phi)$ boundary conditions denoting wired on $\partial (\mathcal S_{b,m,\phi} \cap \Lambda_{n,n})\cap \mathcal H^+_{b+1,\phi}$ and free elsewhere on $\partial (\cS_{b,m,\phi}\cap \Lambda_{n,n})$. Then there exists $c(\delta,q)>0$ such that
\[\pi_{\cS_{b,m,\phi}\cap \Lambda_{n,n}}^{1,0,b,\phi}(\mathcal I \not \subset  \mathcal S_{b,h,\phi}\cap \Lambda_{n,n} )\lesssim n^2 \exp({-ch^2/n})\,.
\]
\end{proposition}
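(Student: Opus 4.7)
My plan is to deduce Proposition~\ref{prop:strip-surface-tension} from the half-infinite strip estimate Proposition~\ref{lem:surface-tension} via the domain Markov property, FKG monotonicity, and reflection symmetry of the FK measure at $p_c$. Begin by splitting
\[
\{\cI\not\subset \cS_{b,h,\phi}\cap\Lambda_{n,n}\}=\{\cI\not\subset \cH^-_{b+h,\phi}\}\cup \{\cI\not\subset \cH^+_{b-h,\phi}\}\,,
\]
so that a union bound reduces matters to bounding the upper and lower excursions separately, each of which should be controlled by the right-hand side of Proposition~\ref{lem:surface-tension} with $a=h$.

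For the upper excursion (a decreasing event in $\omega$), let $D_+:=\cS_{b,m,\phi}\cap\cH^+_{b,\phi}\cap\Lambda_{n,n}$ denote the upper half of the strip; the event depends only on $\omega\restriction_{D_+}$ since $h\le m$. Condition on the lower-half configuration: by the domain Markov property, the conditional law of $\omega\restriction_{D_+}$ is the FK measure on $D_+$ with the original $(1,0,b,\phi)$ boundary condition restricted to $\partial D_+\cap \partial(\cS_{b,m,\phi}\cap\Lambda_{n,n})$ (which I will denote $\xi$; it is wired on the top diagonal $y=b+m+x\tan\phi$ and on the upper portions of the vertical sides), together with a random partition $\eta$ on the dividing line $y=b+x\tan\phi$ induced by the lower half. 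Since the upper excursion is decreasing and the all-free partition is FKG-minimal among the $\eta$, monotonicity in boundary conditions yields
\[
\pi^{(1,0,b,\phi)}_{\cS_{b,m,\phi}\cap\Lambda_{n,n}}(\cI\not\subset \cH^-_{b+h,\phi}) \leq \pi^{\xi,\mathrm{free}}_{D_+}(\cI\not\subset \cH^-_{b+h,\phi})\,.
\]
A second application of FKG comparison passes from $D_+$ to the half-infinite strip $\cH^+_{b,\phi}$: the measure $\pi^{\xi,\mathrm{free}}_{D_+}$ has wired on the top of $D_+$, which is the FKG-maximum compared with the random b.c.\ induced at that top by the larger measure $\pi^{(1,0,b,\phi)}_{\cH^+_{b,\phi}}$ restricted to $D_+$. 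Reversing the inequality on the decreasing upper excursion event gives
\[
\pi^{\xi,\mathrm{free}}_{D_+}(\cI\not\subset \cH^-_{b+h,\phi}) \leq \pi^{(1,0,b,\phi)}_{\cH^+_{b,\phi}}(\cI\not\subset \cH^-_{b+h,\phi}\cap\cH^+_{b,\phi})\lesssim n^2 \exp(-ch^2/n)\,,
\]
by Proposition~\ref{lem:surface-tension} with $a=h$.

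The lower excursion is bounded by the mirror-image argument: condition on the upper half, take the FKG-maximal induced partition $\eta=\mathrm{wired}$ on the dividing line (the worst case for the now-increasing lower-excursion event), and compare with a $180^\circ$-rotated instance of Proposition~\ref{lem:surface-tension}, namely its analog on $\cH^-_{b,\phi}$ with wired boundary on the lower portions of the vertical sides and free on the top diagonal $y=b+x\tan\phi$. This rotated estimate follows from the reflection invariance of the FK measure at $p_c$ and of the cluster-expansion large-deviation estimate of~\cite{MMRS91} underlying Proposition~\ref{prop:fk-415}. Combined with the upper-excursion bound, the union bound then yields Proposition~\ref{prop:strip-surface-tension}.

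The main subtlety will lie in verifying the reflected form of Proposition~\ref{lem:surface-tension}: its proof in~\cite{GL16a} invokes Proposition~\ref{prop:fk-415} on the infinite strip $\cS_n$, and one should check that the swap of wired and free on the upper versus lower boundary arcs of $\cH^{\pm}_{b,\phi}$ is compatible at the corner vertices $(0,b)$ and $(n,b+n\tan\phi)$ where the wired and free arcs meet, so that the reflected order-disorder interface is genuinely in the setting of Proposition~\ref{prop:fk-415}. Modulo this routine check, the whole argument amounts to two successive applications of FKG monotonicity, pinching the finite-strip measure between the measure on $\cH^\pm_{b,\phi}$ for which the estimate is already known.
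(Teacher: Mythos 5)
Your proposal follows essentially the same route as the paper: a union bound into the upper ($\{\cI\not\subset\cH^-_{b+h,\phi}\}$, decreasing) and lower ($\{\cI\not\subset\cH^+_{b-h,\phi}\}$, increasing) excursion events, then two applications of FKG monotonicity (cutting off the strip on the free side with the worst-case induced free partition, then extending to the half-infinite strip on the wired side) to reduce the decreasing case to Proposition~\ref{lem:surface-tension}, and finally a symmetry argument to transfer the bound to the increasing case --- which is precisely the paper's argument via Lemma~\ref{lem:subset-surface-tension} together with the one-line remark that the increasing event follows ``by reflection symmetry and self-duality.'' One small caution on the piece you flag as ``the main subtlety'': the reflected half-strip estimate on $\cH^-_{b,\phi}$ with wired and free interchanged is \emph{not} a consequence of geometric reflection alone (reflection preserves which arcs are wired and which are free); you need to combine reflection with the self-duality of the FK model at $p_c=p_{\sd}$, which is what actually effects the wired/free swap and is what the paper invokes explicitly.
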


We need the following preliminary estimate (see Figure~\ref{fig:interface-bounds} as a guide).

\begin{figure}
  \begin{tikzpicture}
    \node (plot1) at (0,0) {};

    \node (plot2) at (7,0) {};

    \begin{scope}[shift={(plot1.south west)},x={(plot1.south east)},y={(plot1.north west)}, font=\small]

\draw[color=black, dashed] (0,0) rectangle (20,20);
\draw[draw=black, ultra thick] (0,10) -- (0,20)--(20,20)--(20,0)--(17,0);
\draw[draw=black, ultra thick] (0,20)--(0,23);
\draw[draw=black, ultra thick] (20,-1.765)--(20,23);

\draw[draw=gray] (0,15)--(20,3.235)--(20,0)--(8.5,0)--(0,5)--(0,15);
\draw[fill=orange, opacity=.15] (0,15)--(20,3.235)--(20,0)--(8.5,0)--(0,5)--(0,15);

\draw[draw=gray, opacity=.5] (0,13)--(20,1.235)--(20,0)--(11.9,0)--(0,7)--(0,13);
\draw[fill=blue, opacity=.15] (0,13)--(20,1.235)--(20,0)--(11.9,0)--(0,7)--(0,13);
\draw[fill=white, opacity=.5] (0,0)--(17,0)--(0,10);

\fill[ color=blue, opacity=.2] (8.5,5) to[bend right=10] (17,0) -- (20,0) -- (20,1.235) -- (0,13) -- (0,10) to[bend left=10] (8.5,5);
\fill[ color=white, opacity=.5] (8.5,5) to[bend right=10] (0,10) -- (8.5,5) ;
\draw[ color=black, thick] (8.5,5) to[bend right=10] (0,10) ;
\draw[ color=black, thick] (8.5,5) to[bend right=10] (17,0) ;

\draw[fill=gray, opacity=.22] (0,15)--(20,3.235)--(20,20)--(0,20);
\draw[color=black] (20,-3)--(20,23);

\draw[color=black, dashed] (0,-3) --(0, 23);

\draw[<-] (3,3.235)--(3,5.235); \node[font=\small] at (3,5.735) {$m$}; \draw[->] (3, 6.235)--(3,8.235);
\draw[<-] (5,4.05)--(5,5.05); \node[font=\small] at (5,5.55) {$h$}; \draw[->] (5, 6.05)--(5,7.05);

    \node[font=\small] at (-1,10) {$b$};  
    \node[font=\small] at (10,6) {$\mathcal S_{b,h,\phi}$};
    \node[font=\small] at (17,3) {$\mathcal S_{b,m,\phi}$};

     \node[font=\small] at (0,10) {$\bullet$};
     \node[font= \small] at (17,0) {$\bullet$};

    \end{scope}

    \begin{scope}[shift={(plot2.south west)},x={(plot2.south east)},y={(plot2.north west)}, font=\small]

\draw[color=black, dashed] (0,-3) --(0, 23);
\draw[color=black, dashed] (20,-3)--(20,23);

\draw[color=black, dashed] (0,10)--(17,0)--(20,-1.765);

\draw[color=black, dashed] (0,0) rectangle (20,20);
\draw[draw=black, ultra thick] (0,10) -- (0,20)--(20,20)--(20,0)--(17,0);
\draw[draw=black, ultra thick] (0,20)--(0,23);
\draw[draw=black, ultra thick] (20,-1.765)--(20,23);

    \node[font=\small] at (-1,10) {$b$};  

\draw[fill=orange, opacity=.15] (0,10)--(0,23)--(20,23)--(20,-1.765)--(0,10);
\draw[draw=gray] (0,13)--(20,1.235);

     \node[font=\small] at (0,10) {$\bullet$};
     \node[font= \small] at (17,0) {$\bullet$};
     
\draw[fill=green, opacity=.1] (0,10)--(17,0)--(20,0)--(20,20)--(0,20)--(0,10);
\draw[fill=blue, opacity=.15] (20,-1.765)--(0,10)--(0,20)--(20,20);
\draw[draw=gray, opacity=.3, ultra thick] (0,0)--(17,0)--(0,10)--(0,0);

\node[font=\small] at (14,.7) {$\phi$};
\node[font=\small] at (10,3) {$\mathbf{0}$};

\node[font=\small] at (10,12) {$\mathcal H_{b,\phi}^+\cap \Lambda_{n,n}$};
\draw[<-] (3,8.2)--(3,9.2); \node[font=\small] at (3,9.6) {$a$}; \draw[->] (3, 10.1)--(3,11.2);

\draw[ color=black, thick, fill=white, opacity=.5] (20,-1.765) arc (53.5:65:115) ;

    \end{scope}
  \end{tikzpicture}
  \caption{Left: the sets considered in Proposition~\ref{prop:strip-surface-tension}, where we bound the probability of the interface exceeding height $\pm h$. Right: the proof of Lemma~\ref{lem:subset-surface-tension} uses monotonicity in b.c., to go from bounding the probability of the order-disorder interface exceeding height $a$ in the gray set to the union of the gray and purple sets, to all shaded regions. }\label{fig:interface-bounds}
  \end{figure}
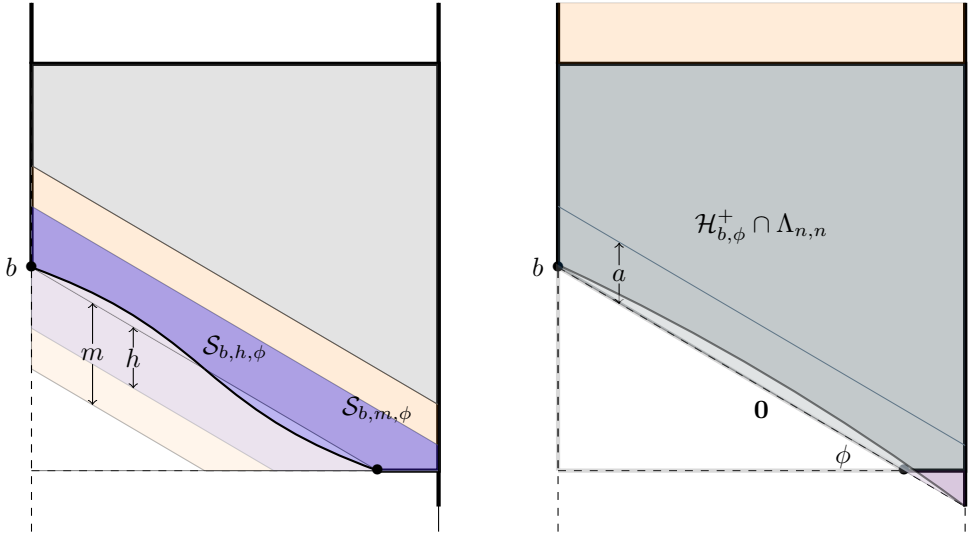

\begin{lemma}\label{lem:subset-surface-tension}
Fix $\delta>0$ and let $q$ be sufficiently large. For any $b\in \mathbb R$ and $\phi\in [-\frac \pi 2 +\delta, \frac \pi 2 - \delta]$,  consider the critical FK model on $\mathcal H^+_{b,\phi}\cap \Lambda_{n,n}$ with $(1,0,b,\phi)$ boundary conditions denoting wired on $\partial (\mathcal H_{b,\phi}^+ \cap \Lambda_{n,n})\cap \mathcal H^+_{b+1,\phi}$ and free elsewhere. There exists $c(\delta,q)>0$ such that
\[\pi_{\mathcal H_{b,\phi}^+ \cap \Lambda_{n,n}} ^{1,0,b,\phi} (\mathcal I \not \subset \mathcal H^-_{b+a,\phi} \cap \mathcal H^+_{b,\phi}\cap \Lambda_{n,n}) \lesssim n^2 \exp({-ca^2/n})\,.
\]
\end{lemma}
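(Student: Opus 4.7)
The plan is to deduce the lemma from Proposition~\ref{lem:surface-tension} by relating the FK measure on $D := \mathcal H^+_{b,\phi}\cap \Lambda_{n,n}$ under $(1,0,b,\phi)$ b.c.\ to the FK measure on the full half-strip $\mathcal H^+_{b,\phi}$ under $(1,0,b,\phi)$ b.c., using the domain Markov property together with FKG (compare the right panel of the figure above).

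Concretely, I would introduce the increasing cylinder event $\mathcal E$ that every edge of $\mathcal H^+_{b,\phi}$ not in the edge set of $D$ is open. Applying the domain Markov property to $\pi_{\mathcal H^+_{b,\phi}}^{1,0,b,\phi}(\,\cdot\mid \mathcal E)$ yields the FK measure on the edges of $D$ with boundary conditions on $\partial D$ obtained by wiring together, through the now-open exterior, the wired part of the original b.c.\ on $\partial \mathcal H^+_{b,\phi}$ (the portions of the vertical sides of $\Lambda_{n,n}$ above $y = b+1+x\tan\phi$) with the top segment of $D$ (at $y = n$), while leaving the tilted line free. By inspection this coincides exactly with the lemma's $(1,0,b,\phi)$ b.c.\ on $\partial D$.

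Next, set $A_a := \{\mathcal I \not\subset \mathcal H^-_{b+a,\phi}\cap D\}$. Two observations: (i) $A_a$ is a \emph{decreasing} event in the FK configuration, since opening additional primal edges grows the wired cluster and pushes the order-disorder interface (which is the innermost open primal path bounding the dual cluster of the free tilted line) toward that free line; (ii) on $\mathcal E$ the complement of $D$ in $\mathcal H^+_{b,\phi}$ becomes one wired cluster, so the interface in the big domain is automatically contained in $D$ and $A_a$ is measurable with respect to $\omega\restriction_{E(D)}$, coinciding with the intrinsic ``interface exceeds height $a$'' event on $D$. Combining these observations with the domain Markov identification above gives
\[
\pi_{D}^{1,0,b,\phi}(A_a) \;=\; \pi_{\mathcal H^+_{b,\phi}}^{1,0,b,\phi}(A_a\mid \mathcal E)\;\le\; \pi_{\mathcal H^+_{b,\phi}}^{1,0,b,\phi}(A_a)\;\lesssim\; n^2\exp(-ca^2/n),
\]
where the inequality is FKG applied to the decreasing event $A_a$ and the increasing event $\mathcal E$, and the final bound is Proposition~\ref{lem:surface-tension}.

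The main delicate point, which requires careful bookkeeping rather than new ideas, is verifying that the induced boundary condition on $\partial D$ after conditioning on $\mathcal E$ is exactly the lemma's $(1,0,b,\phi)$ b.c.: one must track the identification of wired boundary components along the top of $\Lambda_{n,n}$ and the two vertical sides above line $b+1$, and confirm there is no spurious wiring along the tilted free segment. The other potentially confusing point --- that $A_a$ is \emph{decreasing} rather than increasing --- relies crucially on the definition of $\mathcal I$ as the innermost primal boundary of the dual-connected free region (cf.\ Definition~\ref{def:fk-interface}), rather than as an arbitrary primal crossing.
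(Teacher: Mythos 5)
Your proposal is correct and rests on the same core mechanism as the paper's proof: the event $A_a$ is decreasing under the $(1,0,b,\phi)$ convention, and one passes from the truncated domain $D=\mathcal H^+_{b,\phi}\cap\Lambda_{n,n}$ to the full half-strip $\mathcal H^+_{b,\phi}$ by a monotonicity argument, after which Proposition~\ref{lem:surface-tension} gives the bound. The implementation differs slightly: the paper applies monotonicity in boundary conditions twice, through the intermediate domain $S'=\mathcal H^+_{b,\phi}\cap\mathcal H^-_{n,0}$ (first removing the floor at $y=0$, then the ceiling at $y=n$), each time comparing to the stochastically dominating $(1,0,b,\phi)$ boundary condition and using that $A_a$ is decreasing. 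You instead go in one step by conditioning $\pi^{1,0,b,\phi}_{\mathcal H^+_{b,\phi}}$ on the increasing event $\mathcal E$ that the exterior of $D$ is entirely open, applying FKG, and identifying the conditional law on $E(D)$ exactly with $\pi_D^{1,0,b,\phi}$ via the domain Markov property. Your identification claim is stronger than what the paper uses (the paper's two-step argument needs only domination, not equality of induced boundary conditions), but it does hold: as you flag, the one bookkeeping point to check is that vertices of $\partial D$ on the tilted line have no neighbor in $\mathcal H^+_{b,\phi}\setminus D$ and hence stay free, while all of $\partial D\cap\mathcal H^+_{b+1,\phi}$ becomes a single wired component through the open exterior and the wired arc of $\partial\mathcal H^+_{b,\phi}$. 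Both routes are valid; the paper's is marginally more forgiving (domination rather than exact identification), while yours is a single cleaner FKG step. One minor notational caution: when applying FKG and then Proposition~\ref{lem:surface-tension} you should read $A_a$ on $\mathcal H^+_{b,\phi}$ as $\{\mathcal I\not\subset\mathcal H^-_{b+a,\phi}\cap\mathcal H^+_{b,\phi}\}$, which coincides with the intrinsic event on $D$ only after conditioning on $\mathcal E$ (as your observation (ii) notes); stating this explicitly would make the chain of equalities and inequalities airtight.
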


\begin{proof}Let $B=\mathcal H^-_{b+a,\phi} \cap \mathcal H^+_{b,\phi}\cap \Lambda_{n,n}$ and consider the region $S'= \mathcal H^+_{b,\phi} \cap \mathcal H^-_{n,0}$.

For a general domain $D\subset \mathcal S_n$ let the boundary conditions $(1,0,b,\phi)$ on it denote wired on $\partial D\cap \mathcal H^+_{b+1,\phi}$ and free elsewhere on $\partial D$. By monotonicity in boundary conditions, and then inclusion of events,

\begin{align*}
\pi _{\mathcal H^+_{b,\phi} \cap \Lambda_{n,n}}^{1,0,b,\phi} (\mathcal I\not \subset B)
& \leq \pi^{1,0,b,\phi} _{S'}(\mathcal I\restriction_{\Lambda_{n,n}} \not \subset B) \\
& \leq \pi^{1,0,b,\phi} _{S'}(\mathcal I \not \subset \mathcal H^-_{b+a,\phi} \cap \mathcal H^+_{b,\phi})\,.
\end{align*}
The application of the FKG inequality here is valid as under $(1,0,b,\phi)$ boundary conditions, events of the sort ``$\mathcal I$ exceeds some height $a$", are decreasing events, as adding edges only pushes the order-disorder interface down. This property will be used throughout the paper. By monotonicity in boundary conditions again,
\[\pi_{S'}^{1,0,b,\phi}(\mathcal I \not \subset \mathcal H^-_{b+a,\phi}\cap \mathcal H^+_{b,\phi}) \leq
\pi_{\mathcal H^+_{b,\phi}}^{1,0,b,\phi} (\mathcal I \not \subset \mathcal H^-_{b+a,\phi}\cap \mathcal H^+_{b,\phi})\,.
\]
By Proposition~\ref{prop:surface-tension}, there exists a $c(\delta,q)>0$ such that
\[\pi_{\mathcal H^+_{b,\phi}}^{1,0,b,\phi} (\mathcal I \not \subset \mathcal H^-_{b+a,\phi}\cap \mathcal H^+_{b,\phi}) \lesssim n^2 e^{-c a^2/n}\,. \qedhere
\]
\end{proof}

\begin{proof}[\emph{\textbf{Proof of Proposition~\ref{prop:strip-surface-tension}}}]
As before, let $(1,0,b,\phi)$ boundary conditions on $D\subset \mathbb Z^2$ denote those that are wired on $\partial D\cap \mathcal H_{b+1,\phi}^+$ and free on $\partial D\cap \mathcal H_{b+1,\phi}^-$.
By a union bound, write
\begin{align}\label{eq:interface-fluctuation}
\pi_{\cS_{b,m,\phi}\cap \Lambda_{n,n}}^{1,0,b,\phi}(\mathcal I \not \subset \mathcal S_{b,h,\phi} )\leq \pi_{\cS_{b,m,\phi}\cap \Lambda_{n,n}}^{1,0,b,\phi}(\mathcal I \not \subset \mathcal H^+_{b-h,\phi} )+ \pi_{\cS_{b,m,\phi}\cap \Lambda_{n,n}}^{1,0,b,\phi}(\mathcal I \not \subset \mathcal H^-_{b+h,\phi} )
\end{align}
and consider the two quantities independently. Observe that the first event on the right-hand side is an increasing event, while the second event is a decreasing event. By reflection symmetry and self-duality, if we prove the desired bound on the latter, for general $b,h,\phi$, it implies the former also.
By monotonicity in boundary conditions,
\begin{align*}
\pi_{\cS_{b,m,\phi}\cap \Lambda_{n,n}}^{1,0,b,\phi} (\mathcal I \not \subset \mathcal H^-_{b+h,\phi}) & \leq \pi _{\cS_{b,m,\phi}\cap \mathcal H^+_{b-1,\phi}\cap \Lambda_{n,n}}^{1,0,b,\phi} (\mathcal I\not \subset \mathcal H^-_{b+h,\phi}) \\
& \leq \pi_{\mathcal H^+_{b-1,\phi}\cap \Lambda_{n,n}} ^{1,0,b,\phi} (\mathcal I\not \subset \mathcal H^-_{b+h,\phi})\,.
\end{align*}
The right-hand side above is exactly the probability bounded in Lemma~\ref{lem:subset-surface-tension}, from which, along with the symmetry noted above and ~\eqref{eq:interface-fluctuation}, the desired upper bound follows.
\end{proof}  

We will also need the following bound in order to prove the mixing time upper bounds on cylinders in \S\ref{sub:subexp-cylinders}. It is an adaptation of the proof of \cite[Lemma A.6]{MaTo10} from the Ising model to the FK model via Propositions~\ref{prop:fk-415}--\ref{prop:fk-416}, and we omit some details.

\begin{proposition}\label{prop:cylinder-midpoint-estimate}
Fix $q$ to be large enough and $\epsilon>0$, and  consider the critical FK model on $\Lambda_{n,h}$ for $n^{\frac 12+\epsilon} \leq h\leq n$ with $1,0$ boundary conditions denoting wired on $\partial_\south \Lambda_{n,h}$ and free elsewhere. For $\rho\in(0,1)$ and $\delta$ small enough, there exists $c(\epsilon)>0$ such that
\[\pi_{\Lambda_{n,h}}^{1,0}(\mathcal I \cap  \llb\lfloor \tfrac n2\rfloor- {\delta n}, \lfloor \tfrac n2\rfloor +{\delta n} \rrb \times \llb 0,\rho h\rrb=\emptyset) \gtrsim e^{-c(\rho h)^2/n}\,.
\]
\end{proposition}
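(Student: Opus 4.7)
The plan is to adapt the Dobrushin--Koteck\'y--Shlosman lower bound on interface excursions to the FK setting, following \cite[Lemma A.6]{MaTo10}. The target avoidance event is implied by the pinning event $\mathcal A$ that the order-disorder interface $\mathcal I$ passes through $O(1)$-neighborhoods of the two points $p_1 = (\lfloor n/2\rfloor - \delta n, \rho h)$ and $p_2 = (\lfloor n/2\rfloor + \delta n, \rho h)$ (plus the event that the middle portion of $\mathcal I$ lies above the rectangle $[\lfloor n/2\rfloor \pm \delta n]\times [0,\rho h)$). So it suffices to lower bound $\pi^{1,0}_{\Lambda_{n,h}}(\mathcal A)$ by $\exp(-c(\rho h)^2/n)$.

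Next, using the domain Markov property, I condition on the edges of $\mathcal I$ in small neighborhoods of $p_1,p_2$: this factorizes the event into three independent interface events on the three sub-regions
\[
D_L = \Lambda_{n,h}\cap\{x\leq \lfloor n/2\rfloor-\delta n\},\quad
D_M = \Lambda_{n,h}\cap\{|x-\lfloor n/2\rfloor|\leq \delta n\},\quad
D_R = \Lambda_{n,h}\cap\{x\geq \lfloor n/2\rfloor+\delta n\},
\]
each carrying $(1,0,\phi)$-type boundary conditions inherited from $\pi^{1,0}_{\Lambda_{n,h}}$ and the conditioning. The interface in $D_L$ now runs at slope $\phi_L = \arctan(\rho h /(\tfrac n2-\delta n))$, in $D_R$ at slope $-\phi_L$, and in $D_M$ is nearly flat at height $\rho h$.

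The main step is to evaluate each sub-partition function using the FK cluster expansion of Proposition~\ref{prop:fk-416}, upgraded to the tilted strips of Definition~\ref{def:strips} exactly as in Proposition~\ref{prop:strip-surface-tension}. Explicitly, the log-partition functions on $D_L$ and $D_R$ read $(\tfrac n2-\delta n)\log(1+\sqrt q) + (\tfrac n2 - \delta n)(\cos\phi_L)^{-1}\tau_{1,0}(\phi_L) + O((\log n)^{f(\kappa)})$, and likewise on $D_M$ with angle $\phi=0$, while the full $\Lambda_{n,h}$ partition function (the denominator) is, to the same accuracy, given by the $\phi=0$ formula. Comparing numerator and denominator, the leading-order exponential cost is
\[
\tfrac{n}{2}\Bigl[(\cos\phi_L)^{-1}\tau_{1,0}(\phi_L) + (\cos\phi_R)^{-1}\tau_{1,0}(\phi_R) - 2\,\tau_{1,0}(0)\Bigr] = O\bigl((\rho h)^2/n\bigr),
\]
using the $C^2$-smoothness of $\phi\mapsto \tau_{1,0}(\phi)/\cos\phi$ at $\phi=0$, which is a consequence of the analyticity of this function in the cluster expansion framework of \cite{MMRS91} for large $q$. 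The polynomial pinning probabilities at $p_1,p_2$ and the $(\log n)^{f(\kappa)}$ error terms of Proposition~\ref{prop:fk-416} are absorbed into the constant $c$ because the hypothesis $h \geq n^{1/2+\epsilon}$ forces $(\rho h)^2/n \gtrsim \rho^2 n^{2\epsilon}$, dominating the logarithmic and polynomial corrections.

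The main obstacle lies in Step~3: identifying the correct quadratic exponent $(\rho h)^2/n$ requires using the second-order (convex) expansion of $\tau_{1,0}(\phi)/\cos\phi$ at $\phi=0$, which in turn must be extracted from the FK cluster expansion of \cite{MMRS91}. A secondary technical point is the interface in the middle strip $D_M$: after pinning, it follows the law of an order-disorder interface in a tilted strip of horizontal length $2\delta n$ with wired arcs at height $\rho h$, and Proposition~\ref{prop:strip-surface-tension} shows its downward fluctuations are of order $\sqrt{\delta n}$, which is $o(\rho h)$ precisely because $\rho h \gtrsim n^{1/2+\epsilon}$; hence the event that it stays above height $0$ has constant conditional probability, not contributing an extra exponential cost.
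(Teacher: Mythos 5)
Your proposal takes a genuinely different route from the paper. The paper's proof is deliberately minimal: it lifts Lemma~A.6 of \cite{MaTo10} to the FK setting essentially verbatim, replacing the Ising cluster expansion input (\cite[Theorem~4.16]{DKS}) with Proposition~\ref{prop:fk-416}. Concretely, it builds a dyadic skeleton $\{z_i\}$ of pinning points whose cigar-shaped neighborhoods $U_{z_i,z_{i+1}}$ stay above the box $B$, restricts the partition-function sum to interfaces contained in $\bigcup U_i$, and shows that this restricted sum is only $\exp(-c(\rho h)^2/n)$ smaller than the unrestricted one. The quadratic cost and the absorption of the $(\log n)^{f(\kappa)}$ errors are inherited wholesale from the cited lemma.

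Your approach instead pins $\mathcal I$ at the two corner points $p_1, p_2$, factorizes the partition function into three cigar-shaped sub-domains, and reads the $(\rho h)^2/n$ cost off a second-order Taylor expansion of $g(\phi)=\tau_{1,0}(\phi)/\cos\phi$ at $\phi=0$. This is cleaner conceptually, and is indeed what is happening ``under the hood'' of the MaTo10 construction, but it shifts the burden onto two steps you state without justification. First, the ``factorization via the domain Markov property'' is not literally valid: $\mathcal I$ is a derived object (the boundary of the dual component of the free arc), and conditioning on $O(1)$ edges near $p_1,p_2$ does not decouple the cluster weights $\Phi(\cC,\mathcal I)$ with $\cC$ straddling the cut; the factorization is only approximate and must be justified within the $\Phi$-weight expansion of Proposition~\ref{prop:fk-416}, which is exactly the work that the dyadic skeleton in the paper (and in \cite{MaTo10}) is engineered to carry out. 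Second, the $C^2$-smoothness (and finiteness of $g''(0)$) of $\phi\mapsto\tau_{1,0}(\phi)/\cos\phi$ is the crux of your Step~3, and while it does hold at large $q$ by the cluster expansion, it is not a statement of \cite{MMRS91} or of Proposition~\ref{prop:fk-416} as quoted; you would need to extract it. One should also record that with $h$ as large as $n$ the angles $\phi_L,\phi_R\approx 2\rho h/n$ are $\Theta(1)$, so the second-order expansion at $0$ must be replaced by a uniform bound $g(\phi)-g(0)\lesssim\phi^2$ on $[-\pi/2+\delta,\pi/2-\delta]$, which again rests on regularity of $g$ rather than a local Taylor estimate. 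The middle-strip fluctuation remark and the observation that the polynomial pinning and $(\log n)^{O(1)}$ errors are dominated by $n^{2\epsilon}\lesssim(\rho h)^2/n$ are both correct and parallel the paper's use of the $h\geq n^{1/2+\epsilon}$ hypothesis. In short: same cluster-expansion input, a cleaner high-level decomposition on your side, but the two steps you declare as ``the main obstacle'' and as a routine factorization are precisely where \cite{MaTo10}'s skeleton construction does its work, and would need to be filled in.
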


\begin{proof}
Denote by $B= \llb \lfloor \frac n2 \rfloor -\delta n, \lfloor \frac n2 \rfloor +\delta n\rrb \times \llb 0,\rho h\rrb$.
Recall the definition of the cigar-shaped region $U_{\kappa, d,\phi}$ in~\eqref{eq:cigar-shaped-region}.
Following ~\cite{MaTo10}, for every $-\log_2 n +2\leq i \leq \log_2 n-2$, let $z_i$ be the nearest vertex to
\[\left(\tilde x_i, d\left|\frac {\tilde x_i(n-\tilde x_i)}{n} \right|^{\frac 12+\kappa}\right)\qquad \mbox{where}\qquad \tilde x_i= \frac n2 + \frac {\mbox{sgn}\,i}{4} \sum_{j=1}^{|i|-1} 2^{-j}\,.
\]
Let $U_{z_i,z_{i+1}}$ be the cigar shaped region $z_i+U_{\epsilon/2,(1-\rho)\wedge \rho,\phi_{z_i,z_{i+1}}}$ where $d=(1+\rho) h/n^{\frac 12 + \epsilon}$ and $\phi_{z_i,z_{i+1}}$ is the angle of the vector from $z_i$ to $z_{i+1}$ (see, e.g.,~\cite[Fig.~8]{MaTo10}).

By monotonicity in boundary conditions and $\{\mathcal I \cap B =\emptyset\}$ being an increasing event, 
\[\pi_{\Lambda_{n,h}}^{1,0} (\mathcal I \cap B = \emptyset ) \geq \pi_{\mathcal H^-_{h,0}}^{1,0}(\mathcal I \cap B = \emptyset)\,,
\]
where $(1,0)$ boundary conditions on $\partial \mathcal H^-_{h,0}$ are wired on $\partial \mathcal H^-_{h,0} \cap \mathcal H^-_{0,0}$ and free elsewhere. Since $U_{z_i,z_{i+1}}\cap B=\emptyset$ for all $i$, if $\mathcal I\subset \bigcup _{|i|\leq \log_2 n}U_i$, then the desired property holds.

Lemma A.6 of \cite{MaTo10} gives  a lower bound on the partition function restricted to interfaces contained in $\bigcup U_i$ as defined above in the setting of the Ising model; the same proof extends that lower bound to the partition function of such FK interfaces, noting that Proposition~\ref{prop:fk-416} is an analogue of \cite[Theorem 4.16]{DKS}: there exists $c>0$ such that
\[\sum _{\mathcal I \subset \mathcal H^-_{h,0}} \lambda^{|\mathcal I| +\sum_{\mathcal C\cap \mathcal I \neq \emptyset} \Phi(\mathcal C,\mathcal I)}\boldsymbol 1\{\mathcal I \subset \mbox{$\bigcup U_i$}\}\geq \exp(-\beta_c \tau_{1,0} (0)  n - c (\rho h)^2/n)\,,
\]
where an error of $(\log n)^{O(1)}$ was absorbed into the term $c (\rho h)^2/n$ via the assumption that $h\geq n^{1/2+\epsilon}$ and a choice of a suitable constant $c>0$.

Furthermore, Proposition~\ref{prop:fk-416}, in particular ~\eqref{eq:surface-tension}, implies there exists $c>0$ so that
\begin{align*}
\sum _{\mathcal I \subset \mathcal H_{h,0}^-} \lambda^{|\mathcal I|+\sum_{\mathcal C\cap \mathcal I \neq \emptyset} \Phi(\mathcal C,\mathcal I)} & 
\leq \exp( -\beta_c \tau_{1,0}(0) n+ c(\log n)^c)\,.
\end{align*}
Then writing the desired probability as in \cite[(A.36)]{MaTo10} as
\begin{align*}
\pi_{\mathcal H^-_{h,0}}^{1,0}\left(\mathcal I \cap B \neq\emptyset\right) & \geq \pi_{\mathcal H^-_{h,0}}^{1,0} \Big(\mbox{$\mathcal I \subset \bigcup_{|i|\leq \log_2 n-2} U_i $}\Big) \\
& = \frac {\sum_{\mathcal I\subset \mathcal H^-_{h,0}} \lambda^{|\mathcal I|+\sum_{\mathcal C\cap \mathcal I \neq \emptyset} \Phi(\mathcal C,\mathcal I)}\boldsymbol 1\{\mathcal I \subset \bigcup U_i\}}{\sum_{\mathcal I\subset \mathcal H^-_{h,0}} \lambda^{|\mathcal I|+\sum_{\mathcal C\cap \mathcal I \neq \emptyset} \Phi(\mathcal C,\mathcal I)}}\,,
\end{align*}
and plugging in the above lower bound on the numerator and upper bound on the denominator, concludes the proof.
\end{proof}

\section{Sub-exponential mixing with symmetry-breaking boundary}

In this section we prove sub-exponential upper bounds on the inverse spectral gap of Swendsen--Wang dynamics for general Dobrushin boundary conditions. Here, there is a single high probability minimizer of the surface tension for the model, breaking the order-disorder phase symmetry that induces slow mixing in Section~\ref{sec:slow-mixing}. In the proofs in this section, we will need to consider random boundary conditions on $\Lambda_{n,m}$. 

\begin{definition}[wired/free-at-infinity b.c.] \label{def:boundary-condition} Define the \emph{wired-at-infinity} boundary conditions as the distribution over boundary conditions on $\Delta\subset \partial \Lambda_{n,m}$ given by the distribution on partitions of $\Delta$ induced by $\pi^{1}_{\mathbb Z^2}(\cdot \restriction_{\mathbb Z^2-\Lambda_{n,m}})$, so that in general, the connections interior to $\Lambda_{n,m}$ do not count towards the induced boundary conditions. Define the \emph{free-at-infinity} boundary conditions analogously as the distribution given by $\pi_{\mathbb Z^2}^{0}(\cdot \restriction_{\mathbb Z^2 - \Lambda_{n,m}})$.

We say that a distribution $\mathbf P$ on boundary conditions on $\Delta$ dominates $\mathbf P'$ if $\xi'\sim \mathbf P'$ is a stochastically finer partition than $\xi\sim\mathbf P$. Moreover, if boundary conditions are given by different distributions on different subsets of $\partial \Lambda_{n,m}$, then define the overall distribution on boundary conditions by sampling the boundary partitions independently on each of the boundary subsets (with no connections between the subsets). We say such a boundary condition \emph{piecewise} dominates wired-at-infinity and is dominated by free-at-infinity.
\end{definition}

A particular example of random boundary conditions that we will be useful later on  is the following: if $R\subset D$ are two concentric rectangles, then the boundary conditions induced by the connections from $\omega\restriction_{D-R}$, where $\omega$ is sampled from $\pi^{1}_{R}$, dominate the wired-at-infinity boundary conditions. 

\subsection{A canonical paths estimate for 2D random cluster models}

The canonical paths technique has yielded (see~\cite{DiSa93,DiSt91,JeSi89,Sinclair92}) a very useful upper bound on the mixing time of spin-systems on general graphs; namely, the mixing time is at most exponential in the cut-width of the underlying graph (so that on rectangular subsets of $\mathbb Z^2$, it is at most exponential in the shorter side length). In the case of the random cluster model, the long-range interactions complicate this for general boundary condition, but we prove a modified version of such an estimate for a wide class of boundary conditions. 

\begin{proposition}\label{prop:canonical-paths}
Let $q>4$, $p= p_c(q)$, and $\phi\in (-\frac{\pi}2, \frac{\pi}2)$. Consider the FK model on $S=\mathcal S_{b,h,\phi}\cap \Lambda_{n,n}$ with boundary conditions $\xi \sim \mathbf P$, where $\mathbf P$ is arbitrary on $\partial S \cap \partial_{\east,\west} \Lambda_{n,n}$, and piecewise dominates wired-at-infinity or is dominated by free-at-infinity on each of the other sides of $\partial S$. There exists $C_q>0$ such that for every sequence $f_n\to \infty$, the FK Glauber dynamics have 
\begin{align*}
\mathbf P\big(\xi: \tmix^{\xi} \geq |E(S)|^2\exp [2 (4h + f_n) \log q]\big)\leq O(n^2 e^{-C_q f_n})\,.
\end{align*}
\end{proposition}

The proof will use the following straightforward comparison estimate.

\begin{lemma}\label{lem:bridges-cutwidth}
Fix any $p\in (0,1)$ and $q\geq 1$ and consider the FK model on $S=\mathcal S_{b,h,\phi} \cap \Lambda_{n,n}$ with arbitrary boundary conditions $\xi$. For a connected edge subset $F\subset E(S)$, we have for every $\omega\in \{0,1\}^{E(S)}$ and every $\zeta,\zeta' \in \{0,1\}^F$
\begin{align*}
\pi^{\xi}_S(\omega \mid \omega\restriction_F= \zeta) \leq q^{|\partial F-\partial S| + \tilde k(\xi,F)-1} \pi_S^{\xi}(\omega \mid \omega\restriction_F = \zeta')
\end{align*}
where $\tilde k(\xi,F)$ is the number of components of $\xi$ that intersect both $\partial F$ and $\partial S-\partial F$.
\end{lemma}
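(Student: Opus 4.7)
The plan is to compare the two conditional FK measures on $E(S)-F$ using the explicit Boltzmann form of the weights and a combinatorial bound on the cluster-count changes induced by altering the wirings on $F$. By the Markov property, the conditional distribution of $\omega\restriction_{E(S)-F}$ given $\omega\restriction_F=\eta$ is
\[
\pi^\xi_S(\omega\restriction_{E-F}=\sigma\mid \omega_F=\eta)\;=\;\frac{p^{o(\sigma)}(1-p)^{c(\sigma)}\,q^{k(\sigma\cup\eta;\xi)}}{Z_\eta}, \quad Z_\eta:=\sum_{\sigma'}p^{o(\sigma')}(1-p)^{c(\sigma')}\,q^{k(\sigma'\cup\eta;\xi)},
\]
so, applied at $\eta=\zeta$ and $\eta=\zeta'$ at a common $\omega$ (with the $p$ and $(1-p)$ factors cancelling), the target ratio factors as
\[
\frac{\pi^\xi_S(\omega\mid \omega_F=\zeta)}{\pi^\xi_S(\omega\mid \omega_F=\zeta')}\;=\;q^{\,k(\omega_{E-F}\cup\zeta;\xi)-k(\omega_{E-F}\cup\zeta';\xi)}\cdot \frac{Z_{\zeta'}}{Z_\zeta}.
\]

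Second, I would bound both the pointwise weight ratio and the partition-function ratio by the same combinatorial quantity. Passing to the quotient multigraph $\widetilde S$ obtained by collapsing each $\xi$-equivalence class of $\partial S$ into a single super-vertex, $k(\cdot\,;\xi)$ becomes the ordinary connected-component count on $\widetilde S$, and changes from $\zeta$ to $\zeta'$ on $F$ only affect $\widetilde S$-components that are incident to $\partial F$. The number of $\widetilde S$-super-vertices incident to $\partial F$ equals $|\partial F-\partial S|$ (from interior endpoints, each a singleton super-vertex) plus the number of $\xi$-classes meeting $\partial F\cap\partial S$, which immediately gives a crude envelope on both $|k(\sigma\cup\zeta;\xi)-k(\sigma\cup\zeta';\xi)|$ uniformly in $\sigma$ and on $Z_{\zeta'}/Z_\zeta$.

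Third, I would refine this by showing that $\xi$-classes lying entirely within $\partial F\cap\partial S$ do not produce a net contribution to the ratio of conditional measures: since such a class is already fully fixed by $\xi$, any merging contribution from it to the pointwise factor $q^{k(\sigma\cup\zeta;\xi)-k(\sigma\cup\zeta';\xi)}$ is identically matched in $Z_{\zeta'}/Z_\zeta$ by the symmetry of the $\omega_{E-F}$-sum around these self-contained super-vertices, and therefore cancels in the full ratio. The surviving contributions come from the $|\partial F-\partial S|$ interior endpoints, and from the ``bridging'' $\xi$-classes meeting both $\partial F\cap\partial S$ and $\partial S-\partial F$; the latter are counted by $\tilde k(\xi,F)$, yielding the advertised bound $q^{\,|\partial F-\partial S|+\tilde k(\xi,F)}$. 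The main obstacle is precisely this cancellation step: without it, one only gets a bound scaling with the \emph{total} number of super-vertices touching $\partial F$ (including the self-contained classes), which in general can be substantially larger than $\tilde k(\xi,F)$ and would be insufficient for the application in Proposition~\ref{prop:canonical-paths}.
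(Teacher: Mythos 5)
Your overall plan—write the Radon–Nikodym derivative as $q^{k(\sigma\cup\zeta;\xi)-k(\sigma\cup\zeta';\xi)}\,Z_{\zeta'}/Z_{\zeta}$, pass to the quotient graph where $\xi$-classes become super-vertices, and identify the $|\partial F-\partial S|+\tilde k(\xi,F)$ ``active'' super-vertices incident to $\partial F$—has the right shape and correctly pinpoints that the crude count (all $\xi$-classes meeting $\partial F$) must be cut down to $\tilde k$. However, the step you flag as the main obstacle is not merely unproven: as phrased, it cannot work. You claim that the contribution of a self-contained $\xi$-class to $q^{k(\sigma\cup\zeta;\xi)-k(\sigma\cup\zeta';\xi)}$ ``is identically matched in $Z_{\zeta'}/Z_\zeta$'' and cancels. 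But the pointwise factor depends on $\sigma$, while $Z_{\zeta'}/Z_\zeta$ is a fixed constant; there is no algebraic cancellation between a $\sigma$-dependent quantity and a $\sigma$-independent one, and ``symmetry of the $\omega_{E-F}$-sum'' does not resolve this. A self-contained $\xi$-class $C\subset\partial F\cap\partial S$ is still visible to the $\sigma$-sum (its vertices are endpoints of edges in $E(S)-F$), so the cluster count can change with $\sigma$, and the exponent $k(\sigma\cup\zeta;\xi)-k(\sigma\cup\zeta';\xi)$ does not split into a clean ``$C$-part'' plus ``rest.''

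The paper avoids this entirely by invoking the domain Markov property at the start: both conditional measures are FK measures on $S-F$ with boundary conditions induced on $\partial(S-F)$ by $\zeta\cup\xi$ and $\zeta'\cup\xi$ respectively. With that framing, the irrelevance of the self-contained classes is structural, not a cancellation: such a class has no representative in $\partial S-\partial F$, so it cannot introduce a new distinguishable block in the induced partition of $\partial(S-F)$—at most it can mediate merges among the $\tilde k$ classes that do reach $\partial S-\partial F$, and those are already counted. One then only needs a standard comparison of two FK measures whose boundary partitions on $\partial(S-F)$ coincide outside a set of at most $|\partial F-\partial S|+\tilde k(\xi,F)$ blocks. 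Working at the level of boundary partitions makes that count a one-line observation; working at the level of raw Boltzmann weights, as you do, makes the same fact appear to require a delicate cancellation that in fact is not there. I would rewrite the third step by switching to the domain Markov/induced-boundary-condition picture before attempting any counting.
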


\begin{proof} 
By the domain Markov property, the difference between $\pi_S^{\xi}(\cdot  \mid \omega\restriction_F = \zeta)$ and $\pi_S^{\xi}(\cdot \mid \omega\restriction_F = \zeta')$ is in the boundary conditions that $\zeta\cup \xi$ and $\zeta'\cup \xi$ induce on $S-F$. The boundary partitions induced can differ arbitrarily on $\partial F-\partial S$ contributing a factor of $q^{|\partial F-\partial S|}$; on the other hand, since both configurations use $\xi$ boundary conditions on $S$, the boundary partitions on the rest of $\partial (S-F)$ can only differ if $\zeta$ induces additional boundary connections between distinct boundary components of $\xi$ that reach $\partial S-\partial F$; this accounts for the factor of $q^{\tilde k(\xi,F)-1}$.   
\end{proof}

\begin{proof}[\textbf{\emph{Proof of Proposition~\ref{prop:canonical-paths}}}]
We modify the proof of the canonical path estimate for spin-systems with short-range interactions to the present setup. Partition every segment $L$ of $\partial S\setminus \partial_{\east,\west} \Lambda_{n,n}$ on which $\xi$ is independently sampled, into $\lfloor\frac{|L|}{f_n}\rfloor$ sub-segments $\ell_i$ of $f_n $ vertices each, and possibly an additional exceptional segment $\ell_0$. For every edge $e\in L$, let $\Gamma^e$ be the set of components of $\xi$ that contain vertices on both sides of $L-\{e\}$, which we hereafter refer to as \emph{bridges}. Let
\begin{align}\label{eq:no-bridge-bc}
\mathcal E_{f_n} = \bigg\{\xi: \bigcap_L \bigcap_{i\geq 1} \{\exists e\in \ell_i \mbox{ s.t. } |\Gamma^e| \leq 1\} \bigg \}\,.
\end{align} 
We first prove that for every $\xi \in f_n$, the FK Glauber dynamics on $S$ has
\begin{align}\label{eq:gap-canonical-paths-bd}
\gap_{\xi,S} \leq |E(S)|(1+\sqrt q)\exp [2(4h+f_n) \log q ]\,. 
\end{align} 
This will follow from a standard application of the canonical paths argument for spin-systems with short-range interactions. Namely, label the edges in $S$ lexicographically in their midpoint, first by horizontal coordinate, then by vertical coordinate, and define the path $\gamma_{\omega,\omega'}$ (identified with a sequence of edges in $\Omega_\rc$ between FK configurations, $\omega$ and $\omega'$) as follows: let $e_{l_1},...,e_{l_k}$ be the sequence of edges on which $\omega(\tilde e_{l_i})\neq \omega'(\tilde e_{l_i})$, labeled in their lexicographic ordering. The $i$'th edge in $\gamma_{\omega,\omega'}$ will then be between configuration 
\begin{align*}
\eta=\omega'\restriction_{\{e_1,...,e_{l_i-1}\}} \cup \omega \restriction_{\{e_{l_i},...,e_{|E(S)|}\}}
\end{align*}
and its neighbor $\eta'$ which also has $\eta'(\tilde e_i) = \omega'(\tilde e_i)$. Also, let $\eta^*$ be the configuration that is instead given by $\eta^* = \omega\restriction_{\{e_1,...,e_{l_i -1}\}} \cup \omega'\restriction_{\{e_{l_i},...,e_{|E(S)|}\}}$.
Then by Lemma~\ref{lem:bridges-cutwidth} applied with the choice of $F$ being $\{e_1,...,e_{l_i - 1}\}$ or $\{e_{l_i},...,e_{|E(S)|}\}$, 
\begin{align*}
\pi_S^{\xi}(\omega)\pi_S^{\xi}(\omega') \leq \pi_S^{\xi}(\eta)\pi_S^{\xi}(\eta^*) q^{2(4h+f_n)}
\end{align*}
as $|\partial F-\partial S| \leq 2h$ and $\tilde k(\xi,F) \leq 2h+f_n$. This follows from the fact that $\xi \in \mathcal E_{f_n}$ and the nested structure of boundary bridges, and the fact that the sides with arbitrary boundary conditions have height at most $2h$. By construction, for every transition $(\eta,\eta')$, the map $(\omega,\omega')\mapsto (\eta,\eta^*)$ is injective. Moreover, the probability of making any transition in $\Omega_\rc$ is bounded below by $\frac 1{1+\sqrt q}$. Putting all this together, by the path method we see that for every $\xi \in \mathcal E_{f_n}$, Eq.~\eqref{eq:gap-canonical-paths-bd} holds and by~\eqref{eq:gap-tmix}, the corresponding bound with an extra factor of $O(|E(S)|)$ also holds for the mixing time.

It remains to bound the $\mathbf P$-probability that $\xi\in \mathcal E_{f_n}$.  Fix a segment $L$ of $\partial S\setminus \partial_{\east,\west}\Lambda_{n,n}$ on which $\mathbf P$ is piecewise sampled, and fix a sub-segment $\ell_i$, then take a union bound over all such segments and all $\ell_i$. Moreover, suppose that the boundary conditions on the segment $L$ are dominated by free-at-infinity (the estimate for the case when the distribution of $\mathbf P$ on $\xi\restriction_{L}$ dominates wired-at-infinity follows by similar reasoning). Since $|\Gamma^e|\geq 1$ is an increasing event, it suffices to show
\begin{align*}
\pi_{\mathbb Z^2}^{0}(\exists e\in \ell_i  \mbox{ s.t. } |\Gamma^e|=0) \geq 1-O(n^2 e^{-C_q f_n})\,.
\end{align*}
However, by planarity of boundary conditions induced by $\pi_{\mathbb Z^2}^{0}$ on $\partial S$, the complement of the left-hand side is the event that there exist $x,y$ in the two parts of $L-\ell_i$ such that $x\longleftrightarrow y$, which, if $C_q$ is the constant from~\eqref{eq:exp-decay}, has probability at most 
\begin{align*}
|L-\ell_i| e^{-C_q  f_n} \leq 2ne^{-C_q  f_n}\,.
\end{align*}
(For the wired-at-infinity boundary conditions, observe that in order for $|\Gamma^e| > 1$ for every $e\in \ell_i$,  there must exist $x,y$ in the two parts of $L-\ell_i$ such that $x\stackrel{\ast} \longleftrightarrow y$: this is in turn a decreasing event with exponentially decaying probability under $\pi^1_{\mathbb Z^2}$.) Taking a union bound over at most $4n$ sub-segments $\ell_i$ of various segments $L$, we obtain that
\[
\mathbf P(\xi\in \mathcal E_C) \leq 8 n^2 e^{-C_q f_n}\,. \qedhere
\]
\end{proof}

\begin{remark}\label{rem:canonical-paths-extension}
By standard comparison estimates, one could allow \emph{arbitrary} boundary conditions on any boundary segment of size $O(h)$ of $\partial S$, paying a cost in the spectral gap of at most $\exp(ch \log q)$. This would follow from bounding the ratio of the Dirichlet forms and the Radon-Nikodym derivative between the two (see e.g.,~\cite[Lemma 2.8]{Martinelli94} and~\cite[Eq.~(5.1)]{GL16a} for details). 
\end{remark}

\subsection{Dobrushin boundary conditions}\label{sub:subexp-wired-free}

In this section, we consider the mixing time of Swendsen--Wang dynamics with boundary conditions that are free on a subset of~$\partial \Lambda$ and red elsewhere. While \S\ref{sec:slow-mixing} demonstrates that such boundary conditions can induce a slow mixing (at least $\exp(c n)$) by respecting the order-disorder phase symmetry, this section will establish that the mixing time is faster (at most $\exp(n^{1/2+o(1)})$) under  boundary conditions that have a single order-disorder interface.

Define a general class of \emph{order-disorder Dobrushin boundary conditions}, whose FK representation is wired on one connected boundary arc and free elsewhere. Let $a_n,b_n$ be two distinct points on $\partial \Lambda_{n,n}$. For marked boundary points $(a_n,b_n)$, FK Dobrushin boundary conditions are those that are wired on the clockwise (starting from the origin) arc $(a_n,b_n)$ and free on $(b_n,a_n)$.

\subsection*{Sketch of proof}
Our proof of Theorem~\ref{mainthm:2} adapts the proof of $\tmix \lesssim \exp(c\sqrt{n\log n})$ in~\cite{Martinelli94} for the low-temperature Ising model with plus boundary conditions, but using the censoring inequality Theorem~\ref{thm:censoring} instead of the block dynamics approach of~\cite{Martinelli94}. For Dobrushin boundary conditions between $(a,b)$, we sequentially (at times $t_i$) censor all updates except those in the strip $B_i$ of height $\sqrt{n\log n}$ parallel to the line segment $\langle a,b\rangle$ such that $B_i$ and $B_{i+1}$ overlap on half their height. On the one hand, the canonical paths estimate Proposition~\ref{prop:canonical-paths} bounds the mixing time of $B_i$ by exponential in its height, so that we take $t_i - t_{i-1} =O(e^{c\sqrt{n\log n}})$. On the other hand, by Theorem~\ref{thm:censoring}, if the censored chains started from all wired and all free are coupled with high probability, that bounds the mixing time of the original chain. To couple these two chains, we systematically push the interfaces of the chains started from these initial configurations down (resp., up), until they are within $O(\sqrt{n\log n})$ of each other. This is possible because with high probability (see Lemma~\ref{lem:subset-surface-tension}) the interface of the bottom boundary of $B_i$ (where the free initial configuration is seen) never reaches the top of $B_{i-1}$ and the censored dynamics continues pushing the interface down to $O(\sqrt{n\log n})$ distance of $\langle a,b\rangle$ (see Figure~\ref{fig:pushing-the-interface}).

\begin{proof}[\textbf{\emph{Proof of Theorem~\ref{mainthm:2}}}]
If $a_n,b_n$ are on the same side of $\partial \Lambda_{n,n}$, rotate $\Lambda_{n,n}$ so that they are both on $\partial_\south \Lambda_{n,n}$ and the angle of the interface between them will be zero. Then the proof below when $a_n$ and $b_n$ are on different sides applies identically; the only difference is the boundary conditions in the last step of recursion, where the identity coupling still couples all FK chains with probability $1$ in the mixing time of that last block.

Now suppose $a_n= (a_n^1,a_n^2)$ and $b_n= (b_n^1,b_n^2)$ are on different sides of $\partial \Lambda_{n,n}$; by rotational and reflective symmetry and self-duality, we can take $a_n\in \partial_\west \Lambda_{n,n}$ to be the first point encountered clockwise from the origin, and ensure that $\phi_n= \tan^{-1} (\frac {a_n^2 - b_n^2}{a_n^1-b_n^1})$ is such that $\phi_n \in [-\frac \pi 4,\frac \pi 4]$. Fix any such choice of $a_n,b_n\in \partial \Lambda_{n,n}$ and let $\phi=\phi_n$.

We establish the theorem for FK Glauber dynamics with $(a_n,b_n)$ Dobrushin boundary conditions. Throughout this proof, let $c_1=c_1(q)>0$ be a  large enough constant (e.g., $5/C_q$ for $C_q$ from Proposition~\ref{prop:canonical-paths} would suffice).
Define the overlapping blocks
\begin{align*}
B_i &:= \cS_{{a_n}+{(N-i)\ell},\ell,\phi}\cap \Lambda_{n,n} \qquad(i=1,\ldots , N)\qquad\mbox{ and}\\
B_i &:= \cS_{{a_n}-{(N-i)\ell},\ell,\phi}\cap \Lambda_{n,n} \qquad(i=-1,\ldots , -N+1)\qquad\mbox{for $N=\big\lceil\tfrac {n}{\ell}\big\rceil$}\,,
\end{align*}
where we choose $\ell=c_3\sqrt {n\log n}$ for $c_3=4/\sqrt{c_2}$, with $c_2(q)$ as given by Proposition~\ref{prop:surface-tension},
so $N\sim c_3^{-1}\sqrt{n/\log n}$ (see Figure~\ref{fig:regions}).
Because of our choice of $(B_{\pm j})_{j=1}^N$, as many as $N$ of the $B_j$ may be empty, and henceforth if $B_j$ is empty, we say any associated events hold trivially. 
Let 
\begin{align*}
t_i = i \cdot K \log n \cdot N^2 \cdot |E(B_i\cup B_{-i})|^2 e^{4(4\ell + c_1\log n) \log q} \qquad \mbox{for} \qquad 0\leq i \leq N\,,
\end{align*} 
for $K$ to be chosen large later.
Define the censored chain $\bar X_t$: between times $t_{i-1}$ and $t_{i}$ censor all updates except those in $B_{i}\cup B_{-i}$. Let $\bar X_t^1$ be the censored chain started from $\bar X_0 =1$ and $\bar X_t^0$ be the censored chain started from $\bar X_0= 0$. By Theorem~\ref{thm:censoring} and~\eqref{eq:init-config-comparison}, it suffices to show that there exists a coupling of $\bar X_t^0$ and $\bar X_t^1$ such that
\begin{align}
\mathbb P(\bar X_{t_{N}}^1\neq \bar X_{t_{N}}^0) = o(n^{-2})\,,
\end{align}
since for any $K, c_1$ fixed, we have $t_{N} \lesssim \exp(O(\sqrt{n\log n}))$ as desired.

\begin{figure}
  \begin{tikzpicture}
    \node (plot1) at (0,0) {};

    \begin{scope}[shift={(plot1.south west)},x={(plot1.south east)},y={(plot1.north west)}, font=\small]

\draw[color=black, dashed] (0,0) rectangle (20,20);
\draw[draw=black, ultra thick] (0,10) -- (0,20)--(20,20)--(20,0);

\draw[draw=black, fill=green, opacity=.15] (0,16.5)--(20,6.5)--(20,9.5)--(0,19.5);
\draw[draw=black, fill=blue, opacity=.15] (0,15)--(20,5)--(20,8)--(0,18);
\draw[draw=black, fill=gray, opacity=.35] (0,18)--(0,20)--(20,20)--(20,8)--(0,18);

\draw[draw=black, opacity=.25] (0,16.5)--(20,6.5)--(20,9.5)--(0,19.5);

\draw[draw=none, pattern=vertical lines, pattern color=DarkGreen](0,16.5)--(20,6.5)--(20,9.5)--(0,19.5);
\draw[draw=none, pattern=horizontal lines, pattern color=blue, opacity=.75] (0,15)--(20,5)--(20,8)--(0,18);
\draw[draw=none, pattern = crosshatch dots, opacity=.2] (0,18)--(0,20)--(20,20)--(20,8)--(0,18);

\draw[color=gray, thin] (0,8.5)--(0,11.5)--(20,1.5)--(20,0)--(17,0)--(0,8.5);

    \node[font=\small] at (-1,10) {$a_n$};  
        \node[font=\small] at (21,0) {$b_n$};  
    \node[font=\small] at (27.3,17.5) {$: B_{i}$};
        \node[font=\small] at (28,15.5) {$: B_{i+1}$};
            \node[font=\small] at (27.5,13.5) {$: R_i^+$};

    \draw[draw=black, fill=green, opacity=.25] (24,18)--(25,18)--(25,17)--(24,17)--(24,18);
    \draw[draw=none,pattern=vertical lines, pattern color=DarkGreen] (24,18)--(25,18)--(25,17)--(24,17)--(24,18);    
    \draw[draw=black, fill=blue, opacity=.25] (24,16)--(25,16)--(25,15)--(24,15)--(24,16);
    \draw[draw=none,pattern=horizontal lines, pattern color=blue, opacity=0.75] (24,16)--(25,16)--(25,15)--(24,15)--(24,16);

    \draw[draw=black, fill=gray, opacity=.35] (24,14)--(25,14)--(25,13)--(24,13)--(24,14);
    \draw[draw=none, pattern = crosshatch dots, opacity=.2] (24,14)--(25,14)--(25,13)--(24,13)--(24,14);

     \draw[color=gray, opacity=.35] (0,10)--(20,0);
    
        \node[font=\small] at (10,5) {$B_N$};  
      \draw[<-] (20.7,6.5)--(20.7,7.6); \node[font=\tiny] at (20.7,8) {$2\ell$}; \draw[->] (20.7, 8.4)--(20.7,9.5);

      \draw[<-] (-.7,15)--(-.7,16.1); \node[font=\tiny] at (-.7,16.5) {$2\ell$}; \draw[->] (-.7, 16.9)--(-.7,18);
        

     \node[font=\small] at (0,10) {$\bullet$};
     \node[font= \small] at (20,0) {$\bullet$};

    \end{scope}
\end{tikzpicture}
\caption{The blocks $B_{i}$ in $\Lambda$, each of which are wired on all sides except possibly $\partial_\south B_{i}$ and the region $R_i^+$ on which we have coupled.} 
\label{fig:regions}
\end{figure}
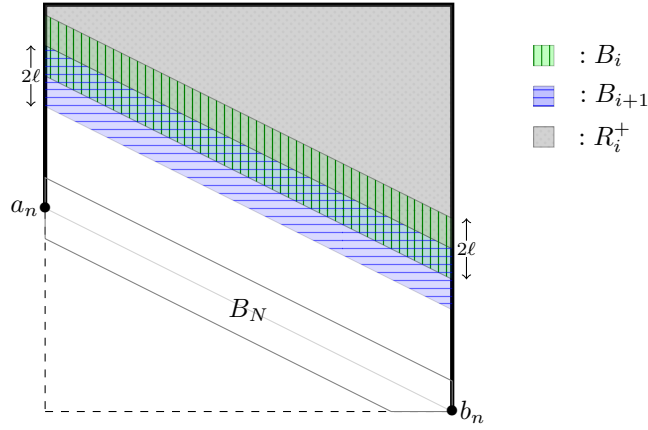

We now define a monotone coupling of $\bar X_{t}^1$ and $\bar X_{t}^0$ which satisfies the above. For each $i=1,\ldots,N$, define the event
\[
A_i^{\pm}= \left\{ \bar X_{t_{i}}^0\restriction_{E(R_i^{\pm})} \neq  \bar X_{t_{i}}^1\restriction_{E(R_i^{\pm})} \right\}\,,\qquad\mbox{where}\qquad
R^{\pm}_i=\bigcup_{j=1}^{i} B_{\pm j}\setminus B_{\pm (i+1)}\,.
\]

We can then write under our coupling, 
\begin{align*}
\P(\bar X_{t_{N}}^0 \neq \bar X_{t_{N}}^1) & \leq \P\bigg(\bigcup_{i=1}^{N} (A_i^+ \cup A_i^-)\bigg) \\
& \leq \P(A_{N-1}^+) + \P (A_{N-1}^-)+\P(A_N\mid (A_{N-1}^{+})^c,(A_{N-1}^-)^c)\,.
\end{align*}
The bounds on $\P(A_{N-1}^+)$ and $\P(A_{N-1}^-)$ are analogous (using the duality of the FK model) and therefore we only bound the former. Abusing notation slightly, when we consider the restriction of the chain $\bar X_t^{1/0}$ to a boundary $\partial S$ , we mean the boundary conditions induced on that line by $\bar X_{t}^{1/0}\restriction_{\Lambda-S}$. 
We will prove the following inductively.
\begin{claim}\label{claim:induction1}
There exists $c(q)>3$ so that, for large enough $K, c_1$, the following holds.
\begin{enumerate}[(1)]
	\item For every $1\leq i \leq N-1$,
	there exists an event $F_i$ measurable w.r.t.\ $(\bar X_t^0,\bar X_t^1)_{t\leq t_i}$ such that $\mathbb P(F_i) \geq 1-O(in^{-c})$ and $\mathbb P(\bar X_{t_{i}}^{0} \restriction_{\partial_\north B_{i+1}} \in \cdot \mid F_i)$ dominates the boundary conditions induced by $\pi_{R_{N-1}^+}^{1}(\cdot \restriction_{R_{i}^+})$ on $\partial_\north B_{i+1}$.
	\item For every $1\leq i \leq N-1$, \begin{align*}
\mathbb P(A_{i}^+)= \mathbb P(\bar X_{t_{i}}^1\restriction_{R_i^+} \neq \bar X_{t_{i}}^0\restriction_{R_i^+}) \lesssim i n^{-c} \,.
\end{align*}
\end{enumerate} 
\end{claim}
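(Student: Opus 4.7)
The plan is to prove both parts simultaneously by induction on $i$, with $F_0 = \Omega$ as trivial input for the base case $i = 1$. The inductive step defines $F_i = F_{i-1}\cap G_i$ for a ``good'' event $G_i$ measurable with respect to $(\bar X^0, \bar X^1)$ on $[t_{i-1}, t_i]$, capturing both the mixing of the censored chain $\bar X^0$ in $B_i$ and the localization of its order-disorder interface below $\partial_\north B_{i+1}$.

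On $F_{i-1}$, I verify the hypothesis of Proposition~\ref{prop:canonical-paths} (via Remark~\ref{rem:canonical-paths-extension}) for the random boundary conditions seen by $\bar X^0$ on $\partial B_i$ during $[t_{i-1},t_i]$. The Dobrushin arc gives wired on $\partial_\north B_i\cap\partial\Lambda$. The region just below $\partial_\south B_i$ outside $\partial\Lambda$ has never been updated in $\bar X^0$ (it lies outside every earlier updated block $B_{\pm j}$, $j<i$), so the induced b.c.\ is exactly free. On $\partial_\north B_i\cap R_{i-1}^+$, the induction hypothesis~(1) gives domination over $\pi^1_{R_{N-1}^+}$-induced b.c., which in turn dominates ``wired'' via FKG and the exponential-decay estimate~\eqref{eq:exp-decay} in the disordered phase. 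Proposition~\ref{prop:canonical-paths} then gives $\tmix^\xi \lesssim |E(B_i)|\exp(c_1\ell\log q)$ except on a b.c.\ event of probability $O(n^{-c'})$, and the choice $t_i - t_{i-1} = K\log n\cdot N\exp(2\ell c_1\log q)$ with $K, c_1$ large forces the censored chain in $B_i$ to within total-variation distance $n^{-c}$ of its stationary distribution, for $c$ as large as desired.

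At stationarity, monotonicity against the $(1,0,a_n+(N-i)\ell,\phi)$ b.c.\ (using that the Dobrushin arc is wired on $\partial_{\east,\west} B_i$ since $B_i$ lies in the wired half-plane) combined with Proposition~\ref{prop:strip-surface-tension} localizes the order-disorder interface below $\partial_\north B_{i+1}$ except with probability $n^2\exp(-c_2(\ell/2)^2/n) = O(n^{-c})$, by the choice $c_3 = 4/\sqrt{c_2}$. Setting $G_i$ as the intersection of the mixing event with the interface-localization event and $F_i = F_{i-1}\cap G_i$, a union bound gives $\mathbb P(F_i)\geq 1 - O(in^{-c})$. On $F_i$, the region $R_i^+\setminus R_{i-1}^+ = B_i\setminus B_{i+1}$ lies above $\bar X^0$'s interface in the pure wired phase; under the grand coupling with $\bar X^0\leq \bar X^1$ together with cluster-expansion control of the wired-phase distribution, the two chains agree there with probability $1-O(n^{-c})$, and combining with $F_{i-1}$'s agreement on $R_{i-1}^+$ yields $A_i^+\subset F_i^c$, proving part~(2). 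For part~(1), the b.c.\ induced on $\partial_\north B_{i+1}$ by $\bar X^0_{t_i}\restriction_{R_i^+}$ on $F_i$ combines the wired upper half of $B_i$ with the Dobrushin arc to place most of $\partial_\north B_{i+1}$ in a single component, which stochastically dominates the target $\pi^1_{R_{N-1}^+}(\cdot\restriction_{R_i^+})$-induced distribution via the domain Markov property and monotonicity.

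The principal obstacle is the coalescence of $\bar X^0_{t_i}$ and $\bar X^1_{t_i}$ on $B_i\setminus B_{i+1}$ with probability $1 - O(n^{-c})$: even above the interface in the pure wired phase, the two chains a priori have slightly different local distributions (from the differing $\partial_\south B_i$ b.c., free vs.\ random), so agreement requires combining the grand coupling with cluster-expansion decay of the influence of the differing lower b.c. A secondary subtlety is the stochastic comparison between $\pi^1_{R_{N-1}^+}$-induced b.c.\ and the specific ``wired'' distribution used in Proposition~\ref{prop:canonical-paths}, which requires a careful FKG application together with the exponential decay of connectivity in the disordered phase.
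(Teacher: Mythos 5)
Your architecture mirrors the paper's: induction on $i$, censoring to confine dynamics to $B_i$, Proposition~\ref{prop:canonical-paths} (via Remark~\ref{rem:canonical-paths-extension}) for single-block mixing in time $\exp(O(\ell\log q))$, and the surface-tension estimate to localize the lower interface inside $B_{i+1}$. But the argument contains a genuine gap precisely where you flag ``the principal obstacle,'' and the fix you sketch is not the right one.

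The gap is the coalescence of $\bar X^0$ and $\bar X^1$ on $B_i\setminus B_{i+1}$. You observe that the grand coupling gives $\bar X^0\leq\bar X^1$ and that ``above the interface'' one is in the wired phase, and then invoke ``cluster-expansion decay of the influence of the differing lower b.c.'' to claim agreement with probability $1-O(n^{-c})$. This is not an argument: cluster-expansion decay controls a one-sided, approximate comparison of marginals, and turning it into a high-probability \emph{exact} coupling of two monotone configurations requires a distinct mechanism that you never supply. The paper's proof is an exact (not approximate) coupling argument and does not use cluster-expansion decay at this step at all. Letting $(\zeta,0)$ and $(\zeta,1)$ denote the boundary conditions seen by the two chains on $\partial B_i$ (agreeing as $\zeta$ on $\partial_\north B_i$ by the inductive hypothesis, and differing only on $\partial_\south B_i$), one samples $\pi^{\zeta,0}_{B_i}$ and $\pi^{\zeta,1}_{B_i}$ under the monotone coupling and reveals, from the bottom up, the dual-component of $\partial_\south B_i$ in the \emph{lower} sample, exposing its boundary $\mathcal I$ (the bottom-most primal horizontal crossing). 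If $\mathcal I\subset B_{i+1}$, then above $\mathcal I$ the boundary conditions of the region $B_i\cap\mathcal H^+$ are: wired on $\mathcal I$ (open in both samples by monotonicity), wired on the Dobrushin arc portions of $\partial_{\east,\west}B_i$, and $\zeta$ on $\partial_\north B_i$---\emph{identically} for $(\zeta,0)$ and $(\zeta,1)$, because the only discrepancy ($\partial_\south B_i$, free vs.\ wired) sits below $\mathcal I$, and since $\mathcal I$ together with the wired arcs already forms a single wired boundary component, the configuration below $\mathcal I$ cannot induce any additional connections between boundary components of the region above (no ``bridges''). By the domain Markov property the conditional laws above $\mathcal I$ therefore coincide exactly, and the monotone coupling coincides on $B_i\setminus B_{i+1}$. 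The failure probability is then exactly $\P(\mathcal I\not\subset B_{i+1})$, which Lemma~\ref{lem:subset-surface-tension} (applied after domination of $\bar X_{t_{i-1}}$'s b.c.\ by the $(1,0)$-Dobrushin b.c.\ on $R_i^+\cup B_i$) bounds by $n^2\exp(-c(\ell/2)^2/n)=O(n^{-c})$.

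Two minor points. Your claim that comparing $\pi^1_{R_{N-1}^+}$-induced b.c.\ to ``wired'' on $\partial_\north B_{i+1}$ ``requires a careful FKG application together with the exponential decay'' is an overcomplication: it is simply monotonicity, since wired boundary conditions on $\partial R^+_{N-1}$ dominate the b.c.\ that $\pi^1_{\mathbb Z^2}$ induces there. Also, your part~(1) cannot be established merely by saying the wired phase ``places most of $\partial_\north B_{i+1}$ in a single component''; the paper obtains part~(1) for free from the same interface-revealing step, using the domain Markov property: conditionally on $\{\mathcal I\subset B_{i+1}\}$ and the wired boundary on $\partial B_{i+1}\cap\partial\Lambda$, the b.c.\ that $\bar X^0_{t_i}$ induces on $\partial_\north B_{i+1}$ dominates the one induced by $\pi^1_{R^+_{N-1}}(\cdot\restriction_{R_i^+})$.
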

\begin{proof}[\textbf{\emph{Proof of Claim~\ref{claim:induction1}}}]For the base case, (1) holds trivially because the outer boundary on $B_1$ is just given by the boundary of $\partial \Lambda$ which will be all-wired there. The base case proof of (2) is just a simplification of the proof of the inductive step for (2) so we do not repeat it here. 
Now suppose both (1) and (2) hold for some $i-1$ and prove they hold for $i$ for a $c(q)>0$ we will pick later. By the inductive hypothesis, with probability $1-O((i-1)n^{-c})$, we have $\bar X_{t_{i}}^0 \restriction_{R_{i-1}^+} = \bar X_{t_{i}}^1 \restriction_{R_{i-1}^+}$, and the boundary conditions on $\partial_\north B_{i}$ dominate wired-at-infinity. In particular, since on the other sides of $B_{i}$ both chains' boundary conditions are either all-free or all-wired, by Proposition~\ref{prop:canonical-paths}, with probability $1-O((i-1)n^{-c})-O(n^{-C_q c_1+2})$ (where $C_q>0$ is the constant from that proposition) the mixing time of FK Glauber dynamics with boundary conditions given by $\bar X_{t_{i}}^1\restriction_{\partial B_{i}}$ is at most $|E(B_i\cup B_{-i})|^2 e^{4(4\ell+ c_1\log n) \log q}$. Now suppose that both of these events hold and consider the probability that $\bar X_{t_{i}}^1\restriction_{R_i^+} \neq \bar X_{t_{i}}^0\restriction_{R_i^+}$.

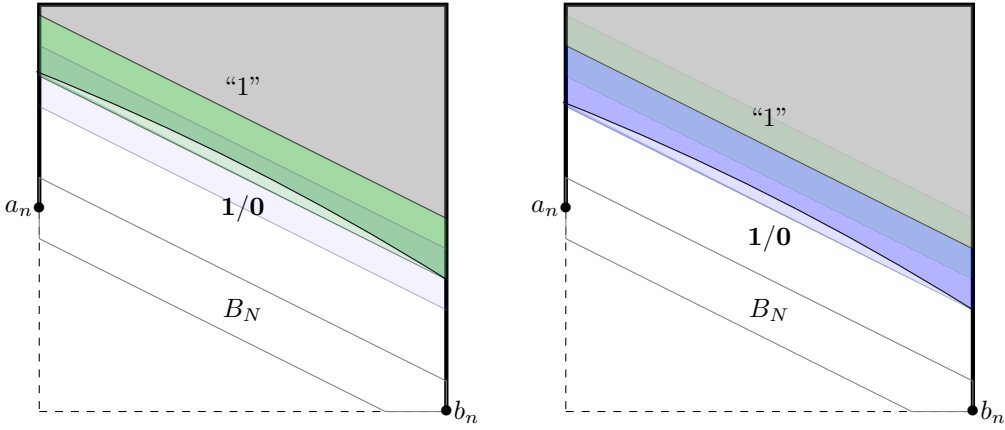
\begin{figure}
  \begin{tikzpicture}
    \node (plot1) at (7,0) {};

    \node (plot2) at (0,0) {};

    \begin{scope}[shift={(plot1.south west)},x={(plot1.south east)},y={(plot1.north west)}, font=\small]

\draw[color=black, dashed] (0,0) rectangle (20,20);
\draw[draw=black, ultra thick] (0,10) -- (0,20)--(20,20)--(20,0);
\draw[draw=black, fill=green, opacity=.1] (0,16.5)--(20,6.5)--(20,9.5)--(0,19.5);
\draw[draw=black, fill=blue, opacity=.3] (0,15)--(20,5)--(20,8)--(0,18);
\draw[draw=black, fill=gray, opacity=.4] (0,18)--(0,20)--(20,20)--(20,8)--(0,18);

\draw[ color=black] (20,5) arc (58:68.2:127) ;
\draw[ color=black, fill=white, opacity=.6] (20,5) arc (58:68.2:127);

\draw[color=gray, thin] (0,8.5)--(0,11.5)--(20,1.5)--(20,0)--(17,0)--(0,8.5);

    \node[font=\small] at (-1,10) {$a_n$};  
        \node[font=\small] at (21,0) {$b_n$};  
    \node[font=\small] at (10,8.5) {$\mathbf{1}/\mathbf{0}$};  
    
        \node[font=\small] at (10,5) {$B_N$};  
        
       \node[font=\small] at (10,14.5) {``$1$"};

     \node[font=\small] at (0,10) {$\bullet$};
     \node[font= \small] at (20,0) {$\bullet$};

    \end{scope}

    \begin{scope}[shift={(plot2.south west)},x={(plot2.south east)},y={(plot2.north west)}, font=\small]

\draw[color=black, dashed] (0,0) rectangle (20,20);
\draw[draw=black, ultra thick] (0,10) -- (0,20)--(20,20)--(20,0);
\draw[draw=black, fill=green, opacity=.3] (0,16.5)--(20,6.5)--(20,9.5)--(0,19.5);
\draw[draw=black, fill=gray, opacity=.3] (0,16.5)--(20,6.5)--(20,9.5)--(0,19.5);

\draw[draw=black, fill=gray, opacity=.4] (0,19.5)--(0,20)--(20,20)--(20,9.5)--(0,19.5);
\draw[draw=gray, thin, opacity=.5] (0,18)--(20,8);

\draw[draw=gray, thin, opacity=.5] (0,15)--(20,5);
\draw[fill=blue, opacity=.05] (0,15)--(0,18)--(20,8)--(20,5)--(0,15);

\draw[ color=black] (20,6.5) arc (58:68.2:127) ;
\draw[ color=black, fill=white, opacity=.6] (20,6.5) arc (58:68.2:127) ;

\draw[color=gray, thin] (0,8.5)--(0,11.5)--(20,1.5)--(20,0)--(17,0)--(0,8.5);

     \node[font=\small] at (0,10) {$\bullet$};
     \node[font= \small] at (20,0) {$\bullet$};

    \node[font=\small] at (-1,10) {$a_n$};  
        \node[font=\small] at (21,0) {$b_n$};  
    \node[font=\small] at (10,10) {$\mathbf{1}/\mathbf{0}$};  
    
        \node[font=\small] at (10,5) {$B_N$};  
        
        \node[font=\small] at (10,16) {``$1$"};
    \end{scope}
  \end{tikzpicture}
  \caption{The green and blue blocks $B_{k}, B_{k+1}$ are updated by the censored dynamics in two consecutive steps; with high probability, the chain $(\bar X_{t_k}^0)_{k}$ pushes its interface down toward $\langle a,b\rangle$ by $\ell$ at every step, and is subsequently coupled to $(\bar X_{t_k}^1)_k$ on the growing gray region.} 
  \label{fig:pushing-the-interface}
\end{figure}

By submultiplicativity of total-variation distance, with probability $1-O(n^{-K})$ there exists a coupling of $\bar X_{t_{i}}^{1/0}\restriction_{B_{i}}$ to $\pi_{B_{i}}^{\bar X_{t_{i-1}}^{1/0}}$. It remains to compute the probability of (1) obtaining boundary conditions under $\pi_{B_{i}}^{\bar X_{t_{i-1}}^{0}}$ that dominate those induced by $\pi_{R_{N-1}^+}^1(\cdot \restriction_{R_{i}^+})$ on $\partial_\north B_{i+1}$, and (2) succeeding in coupling $\pi_{B_{i}}^{\bar X_{t_{i-1}}^{1}}$ to $\pi_{B_{i}}^{\bar X_{t_{i-1}}^{0}}$ on $B_{i}-B_{i+1}$ (and therefore on all of $R_i^+$).

Under the above events, let $(\zeta,0)$ be the boundary conditions induced on $\partial B_{i}$ by $\bar X_{t_{i-1}}^0$ and $(\zeta,1)$ be those induced by $\bar X_{t_{i-1}}^1$ where $\zeta$ is a random boundary condition sampled from a distribution dominating $\pi_{R_{N-1}^+}^1 (\cdot \restriction_{R_{i-1}^+})$ on $\partial_\north B_{i}$. Then, the monotone coupling of $\pi^{\zeta,0}_{B_{i}}$ to $\pi^{\zeta,1}_{B_{i}}$ couples the two on $B_{i}-B_{i+1}$ whenever the bottom-most horizontal crossing $\mathcal I$ in the sample from $\pi_{B_{i}}^{\zeta,0}$ has $\mathcal I \subset B_{i+1}$ (see Figure~\ref{fig:pushing-the-interface}). In that case, by revealing dual-edges from the bottom up, the configurations from $\pi^{\zeta,0}_{B_{i}}$ and $\pi^{\zeta,1}_{B_{i}}$ could be coupled above that interface and in particular on all of $B_{i}-B_{i+1}$.  
(Observe that conditioning on the configuration below the interface, in order to reveal $\mathcal I$, cannot affect the boundary conditions above it because on each side of $\partial (R_i^+\cup B_{i+1})$ the boundary conditions are all-wired or all-free and additional connections cannot be induced (cf.\ the boundary bridges of \cite{GL16b}).)

Observe, also, that when considering $\zeta$ dominating the boundary conditions induced by $\pi_{R_{N-1}^+}^1(\cdot \restriction_{R_{i-1}^+})$, since the boundary on $\partial B_{i+1}\cap \partial \Lambda$ is wired, by the domain Markov property, the boundary conditions induced by $\bar X_{t_{i+1}}^0$ on $\partial_\north B_{i+1}$ when $\mathcal I \subset B_{i+1}$ holds will dominate $\pi_{R_{N-1}^+}^1(\cdot \restriction_{R_{i}^+})$. This implies part (1) if we can bound $\pi^{\zeta,0}_{\partial_\north B_{i}}(\mathcal I \subset B_{i+1})$. 

Thus, for both (1) and (2) of the induction, it only remains to bound the probability
\begin{align}\label{eq:chain-interface-fluctuation}
\mathbb E\bigg[\pi_{B_{i}}^{\zeta,0}\big(\mathcal I \not \subset B_{i+1}\big)\given F_{i-1}\bigg] & \leq \mathbb E_{\pi_{R_{N-1}^+}^1} \bigg[\pi_{B_{i}}^{\omega\restriction_{R_{i-1}^+},0}(\mathcal I \not \subset B_{i+1})\bigg] \nonumber \\ 
&\leq \pi^{1,0}_{R_i^+\cup B_{i}}(\mathcal I \not \subset B_{i+1})\,,
\end{align}
where ${(1,0)}$ boundary conditions on $R_{i}^+\cup B_{i}$ are free on $\partial_\south B_{i}$ and wired elsewhere.  

In that case, Lemma~\ref{lem:subset-surface-tension} (noting that the estimate there was independent of $b$) with the choice of $a=\frac12 c_3 \sqrt {n \log n}$ implies that the probability in~\eqref{eq:chain-interface-fluctuation} is at most $O(n^{-6})$. Combining all of the above, the probability that items (1) and (2) hold is at least 
\begin{align*}
1-O((i-1)n^{-c})+O(n^{-C_q c_1+2})+O(n^{-K})+O(n^{-6})\,,
\end{align*}
which concludes the proof of the induction as long as we take $c_1, K$ large enough that that the latter three terms are all $o(n^{-c})$. \end{proof}

By Claim~\ref{claim:induction1}, we see that both $\mathbb P(A_{N-1}^+)$ and $\mathbb P(A_{N-1}^-)$ have probability at most $O(N n^{-3})$ which is $o(n^{-2})$. It remains to bound $\mathbb P(A_N \mid (A_{N-1}^+)^c, (A_{N-1}^-)^c)$ using similar reasoning to the above. First of all, by part (2), with probability $1-o(n^{-2})$ the chains $\bar X_{t_{N-1}}^{0}$ and $\bar X_{t_{N-1}}^1$ are coupled on both $\partial_\north B_N$ and $\partial_\south B_N$. Moreover, by part (1), the boundary conditions they induce dominates wired-at-infinity on $\partial_{\north}B_N$ with probability at least $1-o(n^{-2})$ and likewise, are dominated by free-at-infinity on $\partial_\south B_N$ with similar probability. Therefore, by time $t_{N}$, by submultiplicativity of total-variation distance and Proposition~\ref{prop:canonical-paths}, we have 
\begin{align*}
\mathbb P(A_{N} \mid (A_{N-1}^+)^c,(A_{N-1}^-)^c) \leq 1- o(n^{-2}) -O(n^{-C_q c_1 + 2})-O(n^{-K})\,,
\end{align*}
and for $c_1,K$ large enough, the right-hand side is $1-o(n^{-2})$.
\end{proof}

\subsection{Sub-exponential mixing on cylinders}\label{sub:subexp-cylinders}

For the rectangle $\Lambda_{n,n}$ define boundary conditions $(p,R)$ (resp.\ $(p,0)$ or $(p,R,0)$ boundary conditions) to be periodic boundary conditions on $\partial_{\north,\south} \Lambda_{n,n}$ and red boundary conditions on $\partial_{\east,\west} \Lambda_{n,n}$ (resp.\ free on $\partial_{\east,\west}\Lambda_{n,n}$ or red on $\partial_{\west} \Lambda_{n,n}$ and free on $\partial_\east \Lambda_{n,n}$). 
We prove the mixing time upper bounds on cylinders with the above boundary conditions (Theorem~\ref{mainthm:3}) at the same time. In what follows, we use $c>0$ to denote the existence of a constant (possibly depending on $q$), where different appearances of $c$ at different places may refer to different values.

The proof builds on the proof of Theorem~\ref{mainthm:2} in that we use the censoring inequalities to push the FK order-disorder across $\Lambda_{n,n}$ in order to couple the chains $\bar X_t^1$ and $\bar X_t^0$. We consider the censored dynamics that sequentially update $N=O(n^{\frac 12-\epsilon})$ overlapping vertical strips of width $n^{\frac 12 +\epsilon}$, ordered from left to right. However, unlike the case in Theorem~\ref{mainthm:2}, our strips do not have wired boundary conditions on three sides (their boundary conditions on the top and bottom are periodic), and therefore, the interface is pushed to the next strip to be updated with probability $\exp({-cn^{2\epsilon}})$ (rather than $1-o(1)$): see Figure~\ref{fig:pushing-the-interface-2}. Thus, with probability $\exp({-cn^{\frac 12+\epsilon}})$  the interface moves to next strip in $N$ consecutive time steps, so that one will succeed, with high probability, at pushing the interface completely across $\Lambda_{n,n}$ after $\exp(n^{\frac 12+2\epsilon})$ attempts. 

\begin{proof}[\textbf{\emph{Proof of Theorem~\ref{mainthm:3}}}]
We again prove the upper bound for the FK Glauber dynamics which translates to an upper bound on Swendsen--Wang dynamics by Theorem~\ref{thm:Ullrich-comparison}. Fix any $\epsilon,\delta>0$ small and consider blocks $B_i$ for $i=1,...,N$ where $N=\lceil \tfrac n \ell \rceil -1$, given by
\begin{align*}
\left\{
\begin{array}{l}
{B_{2i-1}  = \llb (i-1)\ell, (i+1)\ell\rrb \times\llb \delta n, (1-\delta) n\rrb} \\
{B_{2i}= \llb (i-1) \ell, (i+1)\ell\rrb \times \llb 0,\lfloor \frac n2\rfloor- \delta n  \rrb \cup \llb \lfloor \frac n2\rfloor +\delta n, n\rrb}
\end{array}
\right\}
\quad \mbox{where} \quad \ell= n^{\frac 12 +\epsilon}\,,
\end{align*}
and $B_{2N+1}=B_0= \Lambda - \cup _{i=1}^{N} B_{2i-1}\cup B_{2i}$. Since the boundary conditions on $\partial_{\north,\south}\Lambda_{n,n}$ are periodic, each $B_{2i}$ can be viewed as a single connected rectangle with boundary 
\begin{align*}
\partial_{\north,\south} B_{2i}= \llb (i-1)\ell,(i+1)\ell\rrb \times\{ \lfloor\tfrac n2\rfloor+ \delta n,\lfloor \tfrac n2\rfloor-\delta n\}\,.
\end{align*}
We prove the mixing time upper bound for $(p,1,0)$ boundary conditions.  We will pause to comment where the $(p,1)$ boundary conditions would behave differently (namely only when updating block $B_{2N+1}$), and on why this does not affect the proof. The $(p,0)$ boundary conditions can be treated by the dual version of the argument we present.

We will cycle through the blocks $B_i$ periodically, so define $$B_j = B_{j \bmod (2N+1)}\qquad \mbox{for all $j>2N$}\,.$$
Define the following censored Markov chain $\bar X_t^{1}$ (resp., $\bar X_t^0$) started from initial configuration $1$ (resp., $0$): for all $i\geq 0$,
Let $f_n = n^{\frac 12 +3\epsilon}$ and let
\begin{align*}
t_i  =i \cdot n \cdot N^2 \cdot |E(B_i)|^2 e^{2(4\ell+ f_n) \log q}\,;
\end{align*}  
during times $[t_{i-1},t_{i})$, censor all updates outside block $B_{i}$.
Let 
\begin{equation}\label{eq-T-def}T:= t_{2N+1} \exp( n^{\frac 12+2\epsilon})=\exp(O(n^{\frac 12 +3\epsilon}))\,;
\end{equation}
 by Theorem~\ref{thm:censoring} and~\eqref{eq:init-config-comparison}, it will suffice to show that
\begin{align*}
\mathbb P(\bar X_{T}^1 \neq \bar X_{T}^0 ) = o(n^{-2})\,.
\end{align*}
We begin with a uniform upper bound on the mixing times of $B_i$. 

\begin{claim}\label{claim:uniform-mixing-time-bound}
Let $m\leq (2N+1)\exp(n^{\frac 12 +2\epsilon})$ and, for every $i\leq m$, define the event
\begin{equation}
	\label{eq:unif-mixing-time-event}
	\Upsilon_i = \Big\{ \Big(\tmix^{\bar X_{t_{i-1}}^{0}} (B_{i})\vee \tmix^{\bar X_{t_{i-1}}^{1}} (B_{i})\Big)\leq |E(B_i)|^2 e^{2(4\ell +f_n )\log q}\Big\} \,,
\end{equation}
where the superscript ${\bar X_{t_{i-1}}^{\omega_0}}$ denotes boundary conditions induced by $\bar X_{t_{i-1}}^{\omega_0}$ on $B_i$.
Then
\begin{align}\label{eq:induction-uniform-mixing-time}
\mathbb P \Big(\bigcup_{i\leq m} \Upsilon^c_i\Big)\lesssim m ( n^2  e^{-C_q f_n}+e^{-n/2})\,,
\end{align}
where $C_q>0$ is the constant given by Proposition~\ref{prop:canonical-paths}. 
\end{claim} 
\begin{proof}[\textbf{\emph{Proof of Claim~\ref{claim:uniform-mixing-time-bound}}}]
Let $\Xi_i$ be the event that the law of the boundary conditions on $B_i$ under $\bar X_{t_{i-1}}^{0}$ piecewise dominate/are dominated by wired/free-at-infinity, resp., and likewise for $\bar X_{t_{i-1}}^{1}$. We prove inductively that for every $m\leq (2N+1)\exp(n^{\frac 12 +2\epsilon})$, 
\begin{align}\label{eq:double-induction-mixing-time}
\mathbb P \bigg(\bigcup_{i\leq m} (\Upsilon_i^c \cup \Xi_i^c)\bigg) \lesssim m(n^2 e^{-C_q f_n}+e^{-n/2})\,.
\end{align}
The base case, $m=1$, has boundary conditions that are wired on $\partial_\west B_1$ and free/wired on $\partial_{\north,\south,\east} B_1$, and thus $\Upsilon_1 \cap \Xi_1$ holds with probability $1$ by a canonical paths estimate (there are no distinct bridges). Suppose now that~\eqref{eq:double-induction-mixing-time} holds for some $m$; to show that it holds for $m+1$, it suffices to show that the boundary conditions induced by~$\bar X_{t_{m}}^0$, i.e., the chain $\bar X_t^0$ (the bound for the chain $\bar X_t^1$ follows symmetrically).

Assume that the event $\bigcap_{i\leq m}(\Upsilon_i\cap \Xi_i)$ holds. First of all, we notice that for any $i$ satisfying $\Upsilon_i$, by the sub-multiplicativity of total-variation distance and definition of~$t_i$, 
\begin{align}\label{eq:coupling-probability}
\|\mathbb P (\bar X_{t_i}^0\restriction_{B_i} \in \cdot) - \pi^{\bar X_{t_{i-1}}^{0}}_{B_i}\|_\tv \leq e^{-n}\,;
\end{align} 
thus, a union bound over all $i\leq m = O(N \exp(n^{\frac 12+2\epsilon}))$ implies that we may construct a coupling of $(\bar X_{t}^0)$ and some random variables $\bar Z_1,\ldots,\bar Z_m$ such that 
\[ \P\bigg( \bigcap_{i\leq m}\left\{\bar X_{t_i}^{0}\restriction_{B_i} = \bar Z_i\right\}\bigg) \geq 1-e^{-n/2}\quad \mbox{ where $\bar Z_i\sim \pi_{B_i}^{\bar X_{t_{i-1}}^0}$ for each $i$}\,.\]
 We now claim that the boundary conditions induced by $\bar X_{t_m}^{0}$ on $B_{m+1}$ are such that they piecewise dominate/are dominated by wired/free-at-infinity on $\partial B_{m+1}$. Consider the case where $m$ is even (the case $m$ is odd follows analogously). According to~$\Xi_{m-1}$, if we denote by $\zeta$ the boundary conditions induced by $\bar X_{t_{m-2}}^0$ on $\partial B_{m-1}$, then $\zeta$ piecewise dominates/is dominated by wired/free-at-infinity. Hence, when sampling from $\pi_{B_{m-1}}^\zeta$, there would be a well-defined FK order-disorder interface $\mathcal I$ between the boundary subsets that are alternately wired and free. For every such interface, by the domain Markov property, the marginal under $\pi_{B_{m-1}}^\zeta$ on each of the connected components of $E(B_m)\setminus \mathcal I$ either dominates wired-at-infinity or is dominated by free-at-infinity. 

As a consequence, the boundary conditions on $\partial_{\north,\south,\west} B_{m}$ are piecewise sampled from distributions dominating/dominated by wired/free-at-infinity, as are the boundary conditions on the vertical bisector of $ (B_{m-1}\setminus B_{m})$, a subset of $\partial_\west B_{m+1}$. Repeating this reasoning for the update on $B_{m}$, we see that the boundary conditions on $\partial_{\north,\south,\west} B_{m+1}$ are all sampled from distributions that piecewise dominate/are dominated by wired/free-at-infinity. Finally, the same is true of $\partial_\east B_{m+1}$ as it is either completely free if $m\leq 2N+1$ or it is similarly sampled from $\pi_{B_{j}}^{\bar X_{t_{j-1}}^0}$ for some $j<m$ which likewise satisfied $\Xi_{j}$. Thus we deduce that, except with probability $e^{-n/2}$, $\Xi_{m+1}$ holds. 
Then by Proposition~\ref{prop:canonical-paths}, the boundary condition on $B_{m+1}$ induced by $\bar X_{t_m}^0$ is such that 
\begin{align*}
\mathbb P (\tmix^{\bar X_{t_{m}}^{0}}(B_{m+1})\geq |E(B_{m+1})|^2 e^{2(4\ell+f_n)\log q}) \leq O(n^2 e^{-C_q f_n})\,.
\end{align*}
A union bound over the above errors concludes the proof.
\end{proof}

Henceforth, we suppose that the event $\Upsilon_i$ holds for all $i\leq (2N+1) \exp ({n^{\frac 12 +2\epsilon}})$, which is  the case with probability $1-\exp(-\Omega(n^{\frac 12 +3\epsilon}))$ since $f_n =n^{\frac 12 +3\epsilon}$. 

Note that every time increment of $t_{2N+1}$ we make an independent attempt at coupling $\bar X_t^1$ to $\bar X_t^0$, albeit with initial configurations induced by the chains at the end of the last sweep, and once the two chains are coupled on all of $\Lambda$, they will remain coupled for all subsequent times. We will show that there exists $c(\delta,q)>0$ such that for every $k$ and every two configurations $\omega^1_k = \bar X_{t_{(k-1)(2N+1)}}^1$ and $\omega^0_k = \bar X_{t_{(k-1)(2N+1)}}^0$,
\begin{align}\label{eq:probability-of-coupling}
\mathbb P(\bar X^0_{t_{k(2N+1)}}=  \bar X^1_{t_{k(2N+1)}}\mid \omega^1_k,\omega^0_k)& \geq \mathbb P(A_{2kN+k-1}^c\mid \omega^1_k,\omega^0_k) \mathbb P(A_{k(2N+1)}^c\mid A_{2kN+k-1}^c,\omega^1_k,\omega^0_k) \nonumber\\
& \gtrsim \exp(-cn^{\frac 12+\epsilon})\,,
\end{align}
where, in analogy to the proof of Theorem~\ref{mainthm:2}, if $R_i = \bigcup_{j\leq i \bmod (2N+1)} B_j$, 
\begin{align*}
A_{2i}& = \left\{X_{t_{2i}}^0 \restriction_{R_{2i}-B_{2i+1}-B_{2i+2}}\neq X_{t_{2i}}^1\restriction_{R_{2i}-B_{2i+1}-B_{2i+2}}\right\}\,.
\end{align*}
Eq.~\eqref{eq:probability-of-coupling} is sufficient because the probability of not coupling $\bar X_T^0$ and $\bar X_T^1$ by time~$T$ (as defined in~\eqref{eq-T-def})  would then be bounded by \[\mathbb P\left(\mbox{Bin}(\exp(n^{\frac 12+2\epsilon}), \exp(-cn^{\frac 12+\epsilon}))=0\right)=o(n^{-2})\,.\]
In order to lower bound the probability in~\eqref{eq:probability-of-coupling}, we will construct a monotone coupling of the two chains; therefore it suffices to consider the wired and free initial configurations; by the Markov property, it also suffices to only consider the first sweep $k=1$.

Recall from~\eqref{eq:no-bridge-bc} that for any rectangle $\mathcal E_{f_n}$ is the set of boundary conditions on $B_i$ such that in every boundary segment of length $f_n$, there is an edge with at most one boundary component containing vertices on both sides of that edge (bridge). 

\begin{claim}\label{claim:induction2}
There exists $c(\delta,q)>0$ such that, for large $K$ and $c_1$, the following holds.
\begin{enumerate}[(1)]
	\item For every $1\leq i \leq N$, there exists an event $F_i$ measurable w.r.t.\ $(\bar X_t^0,\bar X_t^1)_{t\leq t_{2i}}$ such that $\P(\bar X_{t_{2i}}^0 \restriction_{\partial_{\west}(B_{2i+1}\cup B_{2i+2})}\in \cdot \mid F_{i})$ is in $\mathcal E_{f_n}$ and dominates the wired-at-infinity distribution on boundary conditions on $\partial_\west (B_{2i+1}\cup B_{2i+2})$.
	\item For every $1\leq i \leq N$, the above defined event $F_i$ satisfies
\begin{align*}
\mathbb P (F_i, A_{2i}^c) \gtrsim \exp(-cin^{2\epsilon})\,.
\end{align*}
\end{enumerate}
\end{claim}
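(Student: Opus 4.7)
The plan for Claim~\ref{claim:induction2} mirrors the scheme of Claim~\ref{claim:induction1}, with the per-step coupling probability upgraded from polynomial to the stretched-exponential $\exp(-cn^{2\epsilon})$ delivered by Proposition~\ref{prop:cylinder-midpoint-estimate}. The base case is immediate: at time $t_0=0$ both chains sit in their extremal initial configurations, so $\partial_\west(B_1\cup B_2)\subset\partial_\west\Lambda_{n,n}$ is literally wired and (1) holds trivially; (2) then follows from running the inductive step once.

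For the inductive step I would condition on $F_{i-1}\cap A_{2i-2}^c$, so that $\bar X^1$ and $\bar X^0$ already coincide on $R_{2i-2}\setminus(B_{2i-1}\cup B_{2i})$ and the boundary conditions induced on $\partial_\west(B_{2i-1}\cup B_{2i})$ stochastically dominate ``wired''. During $[t_{2i-2},t_{2i})$ the censored dynamics updates first $B_{2i-1}$ and then $B_{2i}$; the periodic identification glues the two halves of $B_{2i}$ into a single rectangle, so $B_{2i-1}\cup B_{2i}$ is a vertical strip of width $2\ell$ spanning the full height $n$. Each block satisfies the hypothesis of Proposition~\ref{prop:canonical-paths} with $h=2\ell$, because every boundary arc either dominates ``wired'' (via the inductive hypothesis or $\partial_\west\Lambda$) or is dominated by ``free'' (via $\partial_\east\Lambda$ or the less-wired chain $\bar X^0$). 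By submultiplicativity of total variation and the canonical-paths bound $\exp(2\ell c_1\log q)$, after $K\log n$ mixing rounds each in-block configuration is within $n^{-K}$ of its FK equilibrium $\pi^{\zeta^{1/0}}$, with overall failure probability $O(n^{-c_1'})+O(n^{-K})$. Under the monotone grand coupling, the two equilibria agree on $(B_{2i-1}\cup B_{2i})\setminus(B_{2i+1}\cup B_{2i+2})$ precisely when the dual order-disorder interface $\mathcal{I}^*$ inside $B_{2i-1}\cup B_{2i}$ is contained in the eastern half $B_{2i+1}\cup B_{2i+2}$; on this event the induced b.c.\ on $\partial_\west(B_{2i+1}\cup B_{2i+2})$ dominates ``wired'' by the domain Markov property, which delivers (1).

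It remains to lower-bound $\pi^{\zeta^0}(\mathcal{I}^*\subset B_{2i+1}\cup B_{2i+2})$. I would use monotonicity in boundary conditions to replace the random induced b.c.\ on $\partial_\east$, $\partial_\north$, $\partial_\south$ of the strip by their least-wired admissible versions (breaking the periodic identification into free b.c., since the confinement event is increasing in the primal direction), reducing to a clean $2\ell\times n$ rectangle with wired west and free on the remaining sides. A $90^\circ$ rotation then matches the setup of Proposition~\ref{prop:cylinder-midpoint-estimate}, whose application with $h=2\ell=2n^{1/2+\epsilon}$ and $\rho=1-O(\delta)$ yields
\begin{align*}
\pi^{1,0}\big(\mathcal{I}^*\subset B_{2i+1}\cup B_{2i+2}\big)\gtrsim \exp\big(-c(2\ell)^2/n\big)=\exp(-cn^{2\epsilon})\,.
\end{align*}
Multiplying one-step probabilities gives $\P(F_i,A_{2i}^c)\gtrsim \exp(-cin^{2\epsilon})$, as claimed. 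The main obstacle is precisely this monotonicity-plus-rotation reduction: one must verify that the interface-confinement event is monotone in the correct direction; that replacing the periodic identification and the random eastern induced b.c.\ by free genuinely produces a \emph{lower} bound; and that after the rotation the strip $B_{2i+1}\cup B_{2i+2}$ plays the role of the middle box avoided in Proposition~\ref{prop:cylinder-midpoint-estimate}, with the dimensions $(n,2\ell)$ satisfying its hypotheses $n^{1/2+\epsilon}\le h\le n$.
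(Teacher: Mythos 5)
Your proposal identifies the right price to pay per step, namely $\exp(-cn^{2\epsilon})$ coming from Proposition~\ref{prop:cylinder-midpoint-estimate}, but the way you invoke that proposition is where the argument breaks.

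The central problem is that you treat $B_{2i-1}\cup B_{2i}$ as a single clean $2\ell\times n$ rectangle, break the periodic identification to free, rotate by $90^\circ$, and then apply Proposition~\ref{prop:cylinder-midpoint-estimate} with ``$\rho=1-O(\delta)$'' to conclude that the interface is confined to the eastern half with probability $\gtrsim e^{-cn^{2\epsilon}}$. But Proposition~\ref{prop:cylinder-midpoint-estimate} does not give you that. It bounds the probability that the interface avoids a \emph{middle box} of width only $2\delta n$, namely $\llb n/2-\delta n, n/2+\delta n\rrb\times\llb 0,\rho h\rrb$ — not a full-width strip $\llb 0,n\rrb\times\llb 0,\ell\rrb$. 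With wired boundary conditions only on one long side of the rotated rectangle and free on the other three, the interface is pinned at both corners of the wired side and cannot be globally confined to the far half; the event you write down (``$\mathcal{I}^*\subset B_{2i+1}\cup B_{2i+2}$'') is not even nonempty as you have set it up, and at any rate its probability is \emph{not} $e^{-cn^{2\epsilon}}$ — it is much smaller, because you would need the interface to make a macroscopic excursion along the entire length $n$ rather than just above a window of width $O(\delta n)$.

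This is precisely why the paper's proof is a two-stage construction that you have skipped. First, updating only $B_{2i-1}$ (a rectangle of height $(1-2\delta)n$, deliberately chosen to stay away from the periodic seam), it uses Proposition~\ref{prop:cylinder-midpoint-estimate} \emph{as stated} to push the interface away from the middle box $D_i$ at cost $e^{-cn^{2\epsilon}}$ — this is the event $\Gamma_{2i-1}$. Second, conditionally on $\Gamma_{2i-1}$, the exponential decay estimate~\eqref{eq:exp-decay} produces primal crossings of $D_i$ that serve as wired \emph{anchors} on $\partial_{\north,\south}B_{2i}\cap D_i$. Only with these anchors in place does $B_{2i}$ (now a genuine rectangle after gluing at the periodic seam) have wired-dominating boundary conditions on three ``sides'', so that pushing the interface across $B_{2i}$ becomes the \emph{typical} behavior and Lemma~\ref{lem:subset-surface-tension} gives probability $1-e^{-cn^{2\epsilon}}$, not $e^{-cn^{2\epsilon}}$. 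The middle box $D_i$ and the secondary region $D_i'$, together with several applications of~\eqref{eq:exp-decay} to create wired circuits, are also what delivers part (1) — the statement that the induced boundary conditions on $\partial_\west(B_{2i+1}\cup B_{2i+2})$ dominate ``wired''. Your proof gestures at the domain Markov property for this, but without the anchors there is no mechanism to propagate wiring across the full height, including across the periodic seam. So the missing idea is not a technical detail: the $D_i$/anchor mechanism is what converts a hopeless full-width confinement into a product of a single $e^{-cn^{2\epsilon}}$-cost middle-box event and a near-certain confinement event.
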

\begin{proof}[\textbf{\emph{Proof of Claim~\ref{claim:induction2}}}]For the base case of (1), observe that the boundary conditions on $\partial_\west (B_1 \cup B_2)$ dominate wired-at-infinity as they are all-wired; the base case of (2) follows as the proof of the inductive step does, so we do not repeat it here. Now assume that items (1) and (2) hold for $i-1$ (for $c( q)>0$ to be determined later) and show that they hold for $i$ for the same choice of $c$. Consider the middle rectangle
\[D_i=\llb (i-1)\ell,(i+\tfrac 12)\ell\rrb \times \llb \lfloor \tfrac n2\rfloor - \delta n,\lfloor \tfrac n2 \rfloor +\delta n\rrb\,,
\]
and let $\mathcal I$ be the interface bounding the cluster(s) of $\bar X^0_{t_j}\restriction_{\partial_\west B_{j}}$. Define the events
\begin{align*}
\Gamma_{2i} = \left\{\bar X_{t_{2i}}^0 \restriction_{B_{2i}}:\mathcal I \cap (B_{2i} - B_{2i+2}) = \emptyset\right\}\,,
\end{align*}
and
\begin{align*}
\Gamma_{2i-1} = \left\{\bar X_{t_{2i-1}}^0 \restriction_{B_{2i-1}}: \mathcal I \cap D_{i}=\emptyset\right\}\cap \tilde \Gamma_{2i-1}\,,
\end{align*}
where $\tilde \Gamma_{2i-1}$ is the event that $\mathcal I$ is connected to $e_1, e_2$ in $B_{2i-1}- D_i$, where $e_1, e_2$ are a pair of edges in either side of $\partial_\west B_{2i-1}- \partial_\west D_i$, with at most one bridge over them (such a pair of edges exist since by assumption (1), the boundary conditions on $B_{2i-1}$ are in $\mathcal E_{f_n}$). 
It is clear that both $\Gamma_{2i-1}$ and $\Gamma_{2i}$ are increasing events.
Then, we can lower bound 
\begin{align}\label{eq:probability-to-couple-2}
\mathbb P(A_{2i}^c, F_i) \geq \mathbb P(A_{2i-2}^c ,F_{i-1})\mathbb P\Big(A_{2i}^c\cap F_i \cap \Gamma_{2i-1}\cap \Gamma_{2i} \given A_{2i-2}^c , F_{i-1}\Big)\,.
\end{align}

By the inductive hypothesis, $\mathbb P(A_{2i-2}^c, F_{i-1}) \gtrsim \exp(-c(i-1)n^{2\epsilon})$ and from now on work in the probability space conditioned on $F_{i-1} \cap A_{2i-2}^c$.  Under $F_{i-1}\cap A_{2i-2}^c$, the boundary conditions $\bar X^0_{t_{2i-2}} \restriction_{\partial_\west (B_{2i-1}\cup B_{2i})}$ and $\bar X^1_{t_{2i-2}} \restriction_{\partial_\west (B_{2i-1}\cup B_{2i})}$ are coupled and dominate wired-at-infinity (let $\zeta$ denote that random boundary condition). In the following time increment $[t_{2i-2},t_{2i-1})$, only updates on $B_{2i-1}$ are permitted; since we are working on the event $\Upsilon_{2i-1}$, we can couple $\bar X_{t_{2i-1}}^0\restriction_{B_{2i-1}}$ to $\pi^{\zeta,0}_{B_{2i-1}}$ and $\bar X_{t_{2i-1}}^1\restriction_{B_{2i-1}}$ to $\pi^{\zeta,1}_{B_{2i-1}}$ with probability $1-O(e^{-n})$ by~\eqref{eq:coupling-probability}. 

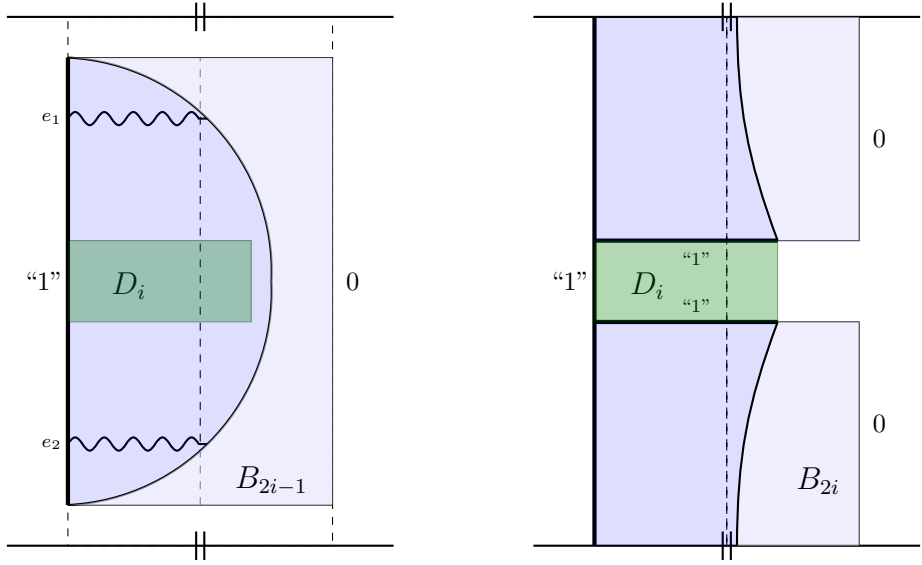
\begin{figure}
  \begin{tikzpicture}
    \node (plot1) at (7,0) {};

    \node (plot2) at (0,0) {};

    \begin{scope}[shift={(plot1.south west)},x={(plot1.south east)},y={(plot1.north west)}, font=\small]

     \draw[draw=black,thick] (-2,-2)--(17,-2);
     \draw[draw=black,thick] (-2,24)--(17,24);
     \draw[draw=black,ultra thick] (1,-2)--(1,24);
     \draw[draw=black,ultra thick] (1,9)--(10,9);
     \draw[draw=black,ultra thick] (1,13)--(10,13);
     
     \draw[draw=black,thick] (7.3,-2.7)--(7.3,-1.3);
     \draw[draw=black,thick] (7.7,-2.7)--(7.7,-1.3);
     \draw[draw=black,thick] (7.3,23.3)--(7.3,24.7);     
     \draw[draw=black,thick] (7.7,23.3)--(7.7,24.7);
     
     \node[font=\small] at (-.2,11) {$``1"$};
     \node[font=\tiny] at (6,12.1) {$``1"$};
     \node[font=\tiny] at (6,9.8) {$``1"$};
     \node[font=\large] at (3.6,10.8) {$D_i$};
     \node[font=\small] at (15,18) {$0$};  
      \node[font=\small] at (15,4) {$0$};

     \draw[draw=black,dashed] (7.5,-2) rectangle (7.5,24);
     \draw[color=black] (1,-2) rectangle (14,9);
     \draw[color=black] (1,13) rectangle (14,24);
     
     \draw[color=black,thin,fill=DarkGreen,opacity=.3] (1,9) rectangle (10,13);
     \draw[color=black, fill=blue, opacity=.13] (1,-2) rectangle (14,9);
     \draw[color=black, fill=blue, opacity=.13] (1,13) rectangle (14,24);

\fill[ color=white, opacity=.5] (8,-2) to [bend right=-10] (10,9)--(14,9)--(14,-2)--(10,-2)--(8,-2);
\draw[ color=black, thick] (8,-2) to [bend right=-10] (10,9) ;

\fill[ color=white, opacity=.5] (8,24) to [bend right=10] (10,13)--(14,13)--(14,24)--(10,24)--(8,24);
\draw[ color=black, thick] (8,24) to [bend right=10] (10,13) ;

     \node[font=\large] at (12,1.2) {$B_{2i}$};

    \end{scope}

    \begin{scope}[shift={(plot2.south west)},x={(plot2.south east)},y={(plot2.north west)}, font=\small]

     \draw[draw=black,thick] (-2,-2)--(17,-2);
     \draw[draw=black,thick] (-2,24)--(17,24);
     \draw[draw=black,thick] (7.3,-2.7)--(7.3,-1.3);
     \draw[draw=black,thick] (7.7,-2.7)--(7.7,-1.3);
     \draw[draw=black,thick] (7.3,23.3)--(7.3,24.7);     
     \draw[draw=black,thick] (7.7,23.3)--(7.7,24.7);
     
     \node[font=\small] at (-.2,11) {$``1"$};  
     
     \node[font=\tiny] at (.2,3) {$e_2$};  
     \node[font=\tiny] at (.2,19) {$e_1$};  
     
     \draw[ color=black, thick, snake=coil, segment aspect=0,%
        segment length=11pt] (1,3)--(7.9,3) ;
          \draw[ color=black, thick, snake=coil, segment aspect=0,%
        segment length=11pt] (1,19)--(7.9,19) ;

     \node[font=\large] at (4,10.8) {$D_i$};

     \draw[draw=black,dashed] (7.5,0)--(7.5,22);
     \draw[draw=black,dashed] (1,-2) rectangle (14,24);

     \draw[draw=black, fill=blue,opacity=.13] (1,0) rectangle (14,22);
      \draw[draw=black] (1,0) rectangle (14,22);
     \draw[color=black,ultra thick] (1,0)--(1,22);
     \draw[color=black,thin,fill=DarkGreen,opacity=.3] (1,9) rectangle (10,13);
     
\draw[ color=black, thick] (1,0) to [bend right=45] (11,11) ;
\draw[ color=black, thick] (11,11) to [bend right=45] (1,22) ;
\fill[ color=white, opacity=.5] (1,0) to [bend right=45] (11,11)--(14,11)--(14,0)--(1,0);
\fill[ color=white, opacity=.5] (11,11) to [bend right=45] (1,22)--(14,22)--(14,11)--(11,11);

     \node[font=\large] at (11,1.2) {$B_{2i-1}$};
     \node[font=\small] at (15,11) {$0$};

    \end{scope}

  \end{tikzpicture}\vspace{-0.2cm}
  \caption{Left: the event $\Gamma_{2i-1}$ where the interface of the component of $\partial_\west B_{2i-1}$ does not intersect $D_i$, and is connected to two edges $e_1,e_2$ that have no bridges. Right: the event $\Gamma_{2i}$ where the interface of the component of $\partial_\west B_{2i}\cup \partial_{\north,\south} D_i$ does not intersect the dashed line. Overall, the intersection $\Gamma_{2i-1}\cap \Gamma_{2i}$ pushes the interface forward by $\ell n$.
  \vspace{-0.2cm}
  } 
  \label{fig:pushing-the-interface-2}
\end{figure}
Thus, by monotonicity, we consider the probability of the event $\mathcal I\cap D_i=\emptyset$ in $\Gamma_{2i-1}$ holding for a sample from $\pi^{\zeta,0}_{B_{2i-1}}$. We claim that for some $c(\delta,q)>0$, 
\begin{align*}
\mathbb E[\pi_{B_{2i-1}}^{\zeta,0}(\mathcal I\cap D_i = \emptyset) \mid F_{i-1}] & \geq \mathbb E_{\pi_{\mathbb Z^2}^1} [\pi_{B_{2i-1}}^{\xi,0}(\mathcal I \cap D_i = \emptyset)] \\
& \gtrsim \pi_{\Lambda_{(1-2\delta)n,3\ell}}^{1,0}(\mathcal I \cap \llb \tfrac n2 -\delta n,\tfrac n2 +\delta n\rrb \times \llb  0,\tfrac {5\ell}2\rrb=\emptyset)-e^{-c \ell}
\end{align*}
where the expectation is over all boundary conditions $\xi$ induced by $\pi_{\mathbb Z^2}^1$ on $\partial_\west B_{2i-1}$ and $(1,0)$ boundary conditions denote wired on $\partial_\south \Lambda_{(1-2\delta)n,3\ell}^{1,0}$. Indeed, the second inequality follows from considering the $\ell$-enlargement $E_{\ell,2i-1}$ of $B_{2i-1}$ which is its concentric rectangle with extra side length $\ell$. If there is a wired circuit in $E_{\ell,2i-1} - B_{2i-1}$ under $\pi_{\mathbb Z^2}^1$ (by~\eqref{eq:exp-decay} this has probability $1-e^{-c\ell}$), we can replace the expectation over b.c.\ induced by $\pi_{\mathbb Z^2}^1$ with an expectation over b.c.\ induced by $\pi_{E_{\ell,2i-1}}^1$. Then extending the free boundary conditions on the other three sides of $B_{2i-1}$ all the way to $\partial_\west E_{\ell,2i-1}$ and rotating yields the second inequality.
By Proposition~\ref{prop:cylinder-midpoint-estimate} with the choices $h=3\ell$ and $\rho=\frac 56$, there exists $c(\delta,q)>0$ so that the probability in the right-hand side above is at least order $e^{-cn^{2\epsilon}}$ (see e.g.,~\cite[\S5 and Fig.\ 7]{GL16a} for a similar monotonicity argument).

Moreover, by the exponential decay of dual-connectivities, it is clear that for any $\zeta \in \mathcal E_{f_n}$, we have $\pi_{B_{2i-1}}^{\zeta,0}(\tilde \Gamma_{2i-1})\geq \eta$ for some $\eta(q)>0$. Thus by the FKG inequality, 
\begin{align*}
 \mathbb E[\pi_{B_{2i-1}}^{\zeta,0}(\Gamma_{2i-1})] \gtrsim e^{-cN^{2\epsilon}}\,.
\end{align*}

Let $\mathcal I_{2i-1}$ be the interface revealed by the component of $\partial _\west B_{2i-1}$. Observe that because $\Gamma_{2i-1}$ is an increasing event, conditioned on $\Gamma_{2i-1}$, if we reveal $\mathcal I_{2i-1}$ from east to west, under the monotone coupling of $\pi^{\zeta,1}_{B_{2i-1}}$ to $\pi^{\zeta,0}_{B_{2i-1}}$ the same edges would also be open under $\pi^{\zeta,1}_{B_{2i-1}}$; the same is also true of the edges that constitute $\tilde \Gamma_{2i-1}$. Having revealed these sets of open edges under both $\pi^{\zeta,1}_{B_{2i-1}}$ and $\pi^{\zeta,0}_{B_{2i-1}}$, by the domain Markov property (there can not be distinct bridges over the interface we have revealed),  $\bar X_{t_{2i-1}}^1\restriction_{D_i} = \bar X_{t_{2i-1}}^0\restriction_{D_i}$ with probability at least $(1-O(e^{-n}))e^{-cn^{2\epsilon}}$.

Now consider the next time increment $[t_{2i-1},t_{2i})$ on $B_{2i}$. Under the above events, the configuration $\bar X_{t_{2i-1}}^0\restriction_{D_i} \succeq \pi_{\mathbb Z^2}^1 (\cdot \restriction_{D_i})$, whence by~\eqref{eq:exp-decay}, with probability at least $1-e^{-c\delta n}$, there is a pair of primal horizontal crossings of the top and bottom halves of $D_i$ connecting  $\partial_{\west}B_{2i-1}$ to $\mathcal I_{2i-1}$. In that case, the distribution on boundary conditions induced by $\bar X^0_{t_{2i-1}}$ (as well as $\bar X^1_{t_{2i-1}}$) on $\partial_{\north, \south} B_{2i} \cap D_i$ dominates wired-at-infinity. Again, since we are working under the event $\Upsilon_{2i}$, we just consider the event in $\Gamma_{2i}$ under $\pi^{\zeta,0}_{B_{2i}}$. By applying~\eqref{eq:exp-decay} and enlarging the domains under consideration as in the earlier bound on $ \Gamma_{2i-1}$, we obtain for some $c(\delta,q)>0$,
\begin{align*}
\mathbb P\Big( \Gamma_{2i} \mid A_{2i-2}^c,  \Gamma_{2i-1}, F_{i-1}\Big) & \geq (1-O(e^{-n}))\mathbb E_{\pi_{\mathbb Z^2}^1} \big[ \pi_{B_{2i}}^{\xi,0,(i+1/2)\ell}(\Gamma_{2i})\big] \\
& \geq (1-O({e^{-n}})) \mathbb E_{\pi_{\mathbb Z^2}^1}\big[ \pi_{\Lambda_{(1-2\delta) n,3\ell /2}}^{0,\xi}(\mathcal I\cap \Lambda_{(1-2\delta) n, \ell}= \emptyset)\big] \\
& \geq (1-O(e^{-n})) (\pi_{\Lambda_{n,2\ell}}^{0,1}(\mathcal I \cap \Lambda_{n,3\ell/2}= \emptyset)-e^{-c\ell})\,.
\end{align*}
Here, the boundary conditions $(\xi,0)$ in the first line denote $\xi$ (over which we take an expectation) induced on $\partial B_{2i}\cap \{(x,y):x=(i+\frac 12) \ell\}$, and free elsewhere on $\partial B_{2i}$, and the boundary conditions in the second and third lines denote free on $\partial_\north$ of the boundary and respectively $\xi$ and wired elsewhere. The second inequality is a simple consequence of monotonicity in boundary conditions and the third inequality follows from enlarging $B_{2i}$ by $\ell/2$ up to an error of $e^{-c \ell}$ coming from~\eqref{eq:exp-decay}.
By Lemma~\ref{lem:subset-surface-tension} with $\phi=0$ and $b=1$, there exists $c(\delta,q)>0$ such that the probability on the right-hand side above is bounded below by $(1-O(e^{-n})) (1-e^{-cn^{2\epsilon}})$. In that case, revealing the interface from east to west, we can couple $\bar X^0_{t_{2i}}$ to $\bar X^1_{t_{2i}}$ beyond the interface (see also Fig.~\ref{fig:pushing-the-interface-2}), so
\begin{align}\label{eq:A-2i-c-bound}
\mathbb P(A_{2i}^c \mid A_{2i-2}, F_{i-1}) \geq (1-O(e^{-n}))(1-e^{-cn^{2\epsilon}})e^{-cn^{2\epsilon}} \gtrsim \exp({-cn^{2\epsilon}})\,.
\end{align}

Finally, we claim that under the intersection of all the above events, with probability $1-O(e^{-c\ell})-O(n^2 e^{-C_q f_n})$, the boundary conditions induced by $\bar X^{1/0}_{t_{2i}}$ on $\partial_\west B_{2i+1}$ and $\partial_\west B_{2i+2}$ are in $\mathcal E_{f_n}$ and dominate wired-at-infinity, which combined with~\eqref{eq:A-2i-c-bound} defines the desired set set $F_i$ such that 
$$\mathbb P(A_{2i}^c, F_{i} \mid A_{2i-2}, F_{i-1}) \gtrsim e^{-cn^{2\epsilon}}\,.$$
 Recall that the configuration on $D_i$ under $ \Gamma_{2i-1}$ and $F_{i-1}$ dominates $\pi_{\mathbb Z^2}^1 \restriction_{D_i}$. Then, with probability $1-2e^{-c\delta n^{\frac 12+\epsilon}}$, $D_i$ contains two horizontal crossings connecting $\partial_\west D_i$ to $\mathcal I_{2i-1}$; since we are also conditioning on $ \Gamma_{2i}$, averaging over configurations on $D_i$, with probability $1-3e^{-c\delta n^{\frac 12+\epsilon}}$, $\partial_\west B_{2i+2}$ is surrounded by a wired circuit in $\bar X^0_{t_{2i}}$. Similarly, under $\bar X^0_{t_{2i}}$, conditional on $ \Gamma_{2i-1}$, $ \Gamma_{2i}$ and $F_{i-1}$, the configuration on 
\[
D_i'= \llb i\ell, (i+2)\ell \rrb \times \llb 0,\delta n\rrb \cup \llb (1-\delta) n,n\rrb
\]
below the interface revealed by $ \Gamma_{2i}$ dominates $\pi_{\mathbb Z^2}^1$ so that with probability $1-2e^{-c\delta n^{\frac 12+\epsilon}}$, there are horizontal primal connections to that interface in both halves of $D_i'$. In that case, averaging over configurations in $D_i'$ with probability $1-3e^{-c\delta  n^{\frac 12+\epsilon}}$, there is a wired circuit around $\partial_\west B_{2i+1}$ in $\bar X^0_{t_{2i}}$ so that the distribution over boundary conditions induced on $\partial_\west B_{2i+1}$ also dominates wired-at-infinity. Moreover, as seen in the proof of Proposition~\ref{prop:canonical-paths}, a boundary condition dominating wired-at-infinity is in $\mathcal E_{f_n}$ with probability $1-O(n^2 e^{-C_q f_n})$. A union bound over the above concludes the proof. \end{proof}

As a result, by item (2) of  Claim~\ref{claim:induction2}, there exists some $c(\delta,q)>0$ for which
\begin{align*}
\mathbb P(A_{2N}^c) \gtrsim \exp\big(-cN n^{2\epsilon}\big) \gtrsim \exp\big(-cn^{ \frac12 +\epsilon}\big)\,.
\end{align*}
Moreover, on that event, with high probability, the boundary conditions on $\partial_\west B_{2N+1}$ induced by both $\bar X^{1}_{2N}$ and $\bar X^0_{2N}$ dominate wired-at-infinity. On the event $\Upsilon_{2N+1}$, with probability $1-O(e^{-n})$ one can couple $\bar X_{t_{2N+1}}^1$ and $\bar X_{t_{2N+1}}^0$ to agree on $B_{2N+1}$, leading the two chains to be coupled on all of $\Lambda$. 
(It is only at this final step where there is a difference between the $(p,1,0)$ and $(p,1)$ boundary conditions; clearly, if the coupling on $B_{2N+1}$ succeeds in the former situation, it also succeeds in the latter.) 
\end{proof}

\section{Slow mixing with phase-symmetric boundary conditions}\label{sec:slow-mixing}
For a reversible chain with transition kernel $P(x,y)$ and stationary distribution $\pi$, define the edge measure $Q$ between $A,B\subset\Omega$ and conductance of the chain, $\Phi$, by
\[Q(A,B)=\sum _{\omega\in A} \pi(\omega)\sum_{\omega'\in B}P(\omega,\omega')\,,\qquad \mbox{and}\qquad \Phi=\max_{\mathcal A\subset \Omega}\frac {Q(\mathcal A,\mathcal A^c)}{\pi(\mathcal A)\pi(\mathcal A^c)}\,.
\]
The Cheeger inequality relates these to the gap  (see, e.g.,~\cite[\S7]{LPW17}), by stating that 
\begin{align}\label{eq:Cheeger-constant}
2\Phi\geq \gap \geq  {\Phi^2}/{2}\,.
\end{align}

\subsection*{The torus}
In Theorem~2 of \cite{GL16a}, the authors used the above to construct an exponential bottleneck relying heavily on the topology of the torus and the exponential decay of correlations under $\pi^{0}_{\mathbb Z^2}$ at a discontinuous phase transition point. We restate the result for the critical Swendsen--Wang dynamics for all $q>4$, which follows from the sharp identification in \cite{DGHMT16} of the discontinuity of the phase transition for all $q>4$.  

\begin{theorem}[{\cite[Theorem 3]{GL16a}}, given the result of \cite{DGHMT16}]\label{thm:torus}
Let $q>4$, and consider the Swendsen--Wang dynamics on $(\mathbb Z/n \mathbb Z)^2$ at $\beta= \beta_c(q)$. There exists $c(q)>0$ such that
\[\tmix\gtrsim \exp(cn)\,.
\]
\end{theorem}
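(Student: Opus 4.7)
The plan is to apply Cheeger's inequality \eqref{eq:Cheeger-constant} at the level of the FK Glauber dynamics on the torus and transfer the resulting bound to Swendsen--Wang via \eqref{eq-ullrich2}. Let $\pi$ denote the critical FK measure on $(\mathbb{Z}/n\mathbb{Z})^2$ and set
\[
\mathcal{B} = \{\omega : \omega \text{ contains a primal cluster wrapping horizontally around the torus}\}\,.
\]
Establishing (i) $\pi(\mathcal{B}),\,\pi(\mathcal{B}^c) \gtrsim 1$ and (ii) $Q(\mathcal{B},\mathcal{B}^c) \leq e^{-cn}$ yields $\gap_{\rc} \leq e^{-cn}$ by Cheeger; then \eqref{eq-ullrich2} gives $\gap_{\textsc{sw}} \leq 16\,\gap_{\rc}\,m\log m \leq e^{-c'n}$ (with $m = 2n^2$), and \eqref{eq:gap-tmix} concludes $\tmix \gtrsim e^{c'n}$.

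For (i), the discontinuity of the phase transition at $p_c(q)$ for every $q>4$ established in \cite{DGHMT16} produces two distinct extremal infinite-volume FK Gibbs measures: a wired measure $\pi^1_{\mathbb{Z}^2}$ whose a.s.\ infinite cluster forces horizontal wrapping on the torus with probability tending to $1$, and a free measure $\pi^0_{\mathbb{Z}^2}$ whose exponential decay \eqref{eq:exp-decay} rules out horizontal wrapping with probability tending to $1$. Planar self-duality at $p_c = p_{\mathrm{sd}}$ interchanges these two phases on the torus, so their contributions to the torus partition function are comparable, whence each of $\pi(\mathcal{B})$ and $\pi(\mathcal{B}^c)$ is uniformly bounded below by some constant depending only on $q$.

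For (ii), since an FK Glauber step resamples a single edge, any transition $\mathcal{B}\to\mathcal{B}^c$ requires the updated edge $e$ to be pivotal for horizontal wrapping, so $|E|\,Q(\mathcal{B},\mathcal{B}^c) \leq \pi(\exists\,e\text{ pivotal for horizontal wrapping})$. The presence of such a pivotal edge forces $\omega$ to contain an order-disorder interface of length $\gtrsim n$ adjacent to $e$ (separating the primal horizontal wrapping from the rest of the torus); a Peierls-type argument based on the positive order-disorder surface tension of Definition~\ref{def:surface-tension}, together with a union bound over the $O(n^2)$ candidate pivotals, then bounds the probability by $e^{-cn}$. The principal obstacle is (i): it requires the \emph{quantitative} coexistence of the wired and free phases in the torus partition function at $\beta_c$, which in \cite{GL16a} was obtained via the cluster-expansion machinery of \cite{LMMRS91,MMRS91} (valid only for $q$ sufficiently large), and whose extension to all $q>4$ rests crucially on the sharp discontinuity result of \cite{DGHMT16}.
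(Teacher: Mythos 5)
Your bottleneck-via-Cheeger framework is the right high-level strategy, but the argument as written misses the key feature of this theorem that the paper explicitly highlights, and this mismatch would make the proof fail precisely where the theorem is interesting.

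The paper emphasizes immediately after the theorem statement that, unlike the other results in this work, the torus lower bound ``requires neither validity of the cluster expansion nor positivity of surface tension, and therefore holds up through $q>4$.'' Your step~(ii) bounds $Q(\mathcal B,\mathcal B^c)$ by ``a Peierls-type argument based on the positive order-disorder surface tension of Definition~\ref{def:surface-tension}.'' But positivity of $\tau_{1,0}$ is proved in~\cite{LMMRS91,MMRS91} only for $q$ sufficiently large via cluster expansion; it is expected, but not known, for all $q>4$. An argument routed through surface tension therefore establishes at best a large-$q$ statement, which is weaker than the claimed theorem and is precisely what~\cite{BCT12} and the original~\cite{GL16a} already covered. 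The ingredient that makes the extension to all $q>4$ possible is the sharp discontinuity result of~\cite{DGHMT16}, which yields the exponential-decay bound~\eqref{eq:exp-decay} under $\pi^0_{\mathbb Z^2}$ for every $q>4$; the actual proof (from~\cite{GL16a}, cited here) bounds the conductance using only~\eqref{eq:exp-decay} together with the topology of the torus, and never invokes an interface large-deviation estimate of the surface-tension type. You also misattribute the role of~\cite{DGHMT16} to step~(i): you write that the phase coexistence in~\cite{GL16a} was obtained ``via the cluster-expansion machinery,'' which contradicts the paper's explicit remark. The quantitative comparability of the two phases on the torus again comes from self-duality and~\eqref{eq:exp-decay}, not from expansions.

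Two smaller issues. First, in step~(i), self-duality on the torus maps the event ``primal cluster wraps horizontally'' to ``dual cluster wraps horizontally,'' not to its complement; on the torus, $\mathcal B^c$ is not the dual image of $\mathcal B$ because there are configurations with neither a primal nor a dual wrap. One needs the additional topological fact that failure of a horizontal primal wrap forces a vertical dual circuit, and then couple that to a duality/symmetry argument — this is fixable but not as automatic as you present it. Second, in step~(ii), the pivotal edge $e$ for horizontal wrapping does force $(\omega-e)^*$ to contain a vertical dual circuit through $e^*$ (a dual-open near-circuit of length $\geq n$ in $\omega$), but this dual circuit is not literally an ``order-disorder interface'' in the sense of Definition~\ref{def:fk-interface}, which is a \emph{primal} open crossing separating wired from free boundary arcs; translating between these objects needs to be made precise even in the large-$q$ regime where surface tension is available. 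In short: replace the surface-tension step with an argument using only~\eqref{eq:exp-decay} and the torus topology, and correct the duality bookkeeping, and the outline would match the cited result.
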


\noindent Observe that exploiting the topology of the torus, unlike the other results in this paper, the above requires neither validity of the cluster expansion nor positivity of surface tension, and therefore holds up through $q>4$. On the other hand, for $q$ that is sufficiently large, slow mixing at $\beta=\beta_c$ was previously shown in~\cite{BCT12} (in any dimension).

\subsection*{Order-disorder mixed boundary conditions}
Although the proof of slow mixing on the torus at a discontinuous phase transition relies heavily on the topology of the torus (see proof of Theorem 2 in \cite{GL16a}) we can---at least for sufficiently large $q$---use a similar approach to prove slow mixing in the presence of mixed wired-free boundary conditions. Exploiting the self-duality, we see that such boundary conditions still exhibit an exponential bottleneck, slowing down the Swendsen--Wang dynamics.

\begin{definition}\label{def:alt-bc}
Let $a_n,b_n,c_n,d_n \in \partial \Lambda_{n,n}$ be a set of marked vertices ordered clockwise from the origin around $\partial \Lambda_{n,n}$ (by rotational symmetry, without loss of generality assume $a_n\in \partial_\west \Lambda_{n,n}$).
The \emph{mixed boundary conditions} on $(a_n,b_n,c_n,d_n)$ are those that are red on the clockwise boundary arcs $(a_n,b_n)$ and $(c_n,d_n)$ and  free on $(b_n,c_n)$ and $(d_n,a_n)$---all connected subsets of $\partial \Lambda_{n,n}$. We say that $(a_n,b_n,c_n,d_n)$ are \emph{$\epsilon$-separated} if $a_n\in \partial_\west \Lambda_{n,n}$, at least one of $\{b_n,c_n,d_n\}$ is not contained in $\llb 0,\epsilon n\rrb \times \llb 0,n\rrb$, and
\[\min _{i,j\in a,b,c,d; \, i\neq j} \|i_n - j_n\|_\infty\geq \epsilon n\,.
\]
\end{definition}

\begin{remark}\label{rem:epsilon-separated}
The requirement of $\epsilon$-separation in our consideration of mixed boundary conditions arises from the fact that if $(a_n,b_n,c_n,d_n)$ were all on $\partial_\west \Lambda_{n,n}$ repeating the proof of Theorem~\ref{mainthm:2} with such boundary conditions would yield that the mixing time is in fact sub-exponential. Clearly, if the four marked vertices are sufficiently close to being on one side or to each other, a similar picture would emerge. The requirement of macroscopic separation ensures that the bottleneck is exponential in $n$.
\end{remark}

With Definition~\ref{def:alt-bc} in hand, to prove Theorem~\ref{mainthm:1}, by rotational symmetry, we wish to prove the following: let $q$ be large, $\epsilon>0$, and consider the Swendsen--Wang dynamics for the critical Potts model on $\Lambda=\Lambda_{n,n}$ with mixed boundary on $(a_n,b_n,c_n,d_n)$ that are $\epsilon$-separated.
Then there exists $c(\epsilon, q)>0$ such that
\[\tmix\gtrsim \exp(c n)\,.
\]

\begin{proof}[\emph{\textbf{Proof of Theorem \ref{mainthm:1}}}]
By \eqref{eq-ullrich2} and Fact~\ref{fact:one-color}, it suffices to prove the bound for the FK Glauber dynamics with mixed FK boundary conditions that are wired on the boundary arcs $(a_n,b_n)$ and $(c_n,d_n)\subset \partial\Lambda_{n,n}$ (and the two boundary arcs are wired together) and free elsewhere (denoted by $\pi_\Lambda^{\mathrm{mixed}}$). Observe that by planarity,
\[\big\{(a_n,b_n)\longleftrightarrow (c_n,d_n)\big\} =\big\{(b_n,c_n)\stackrel{\ast}\longleftrightarrow (d_n,a_n)\big\}^c\,,
\]
and therefore, either
\[\pi^{\alt}_{\Lambda}\big((a_n,b_n)\longleftrightarrow (c_n,d_n)\big)\leq  \frac 12\,, \quad \mbox{ or }\quad
\pi^{\alt}_{\Lambda}\big((b_n,c_n)\stackrel{\ast}\longleftrightarrow (d_n,a_n)\big)\leq\frac 12\,.
\]
By self-duality of the class of $\epsilon$-separated, mixed boundary conditions, we can suppose without loss of generality that we are in the former case.

Recall the definition of the strips $\mathcal S_{b,h,\phi}, \mathcal H^{\pm}_{b,\phi}$ in Definition~\ref{def:strips}, and let $a_n=(a_n^1,a_n^2)$, and likewise for $b_n,c_n,d_n$.
Then let $\phi_{a,d}= \tan^{-1} (\frac {d^2_n-a^2_n}{d^1_n-a^1_n})$ and $\phi_{b,c}= \tan^{-1}(\frac {c_n^2 - b_n^2}{c_n^1 - b_n^1})$. Observe that by the $\epsilon$-separation of $(a_n,b_n,c_n,d_n)$, one of $\phi_{a,d}$ and $\phi_{b,c}$ is in $[-\frac \pi 2+\delta,\frac \pi 2 - \delta]$ for some small enough $\delta>0$ depending only on $\epsilon$. Suppose without loss of generality that for some $\delta(\epsilon)>0$, $\phi_{a,d}\in [-\frac \pi 2 +\delta, \frac \pi 2-\delta]$ and consider the strip
\begin{align*}
S & = \cH^+_{a^1_n, \phi_{a,d}}\cap \cH^-_{a^1_n+\epsilon^2 n, \phi_{a,d}}\cap \Lambda\,,
\end{align*}
Geometrically, by definition of $\epsilon$-separation, $S$ satisfies $S \cap \partial \Lambda \subset (a_n,b_n)\cup (c_n,d_n)$. There is some $x,h,\phi=\phi_{a,d}$ such that $S=\cS_{x,h,\phi_{a,d}}\cap \Lambda$; fix that $x\in \mathbb R_+$, $h= \epsilon^2/2$. Define $\partial_{\north} S=S \cap  \mathcal H^+_{x+h-1,\phi}$ and $\partial_\south S= S \cap \mathcal H^-_{x-h+1,\phi}$, and let
\[\mathcal A=\left\{(a_n,b_n)\stackrel{ S}\longleftrightarrow (c_n,d_n)\right\}\,
\]
be the bottleneck set whose conductance $Q(\mathcal A,\mathcal A^c)/(\pi(\mathcal A)\pi (\mathcal A^c))$ we bound. Since
\[\mathcal A \subset \left\{(a_n,b_n)\longleftrightarrow (c_n,d_n)\right\}\,,
\]
we have that $\pi_{\Lambda}^{\alt}(\mathcal A^c)>\frac 12$. Therefore, we can write 
\[\gap\leq 2\Phi \leq \frac {2Q(\mathcal A,\mathcal A^c)}{\pi_\Lambda^{\alt}(\mathcal A)\pi_{\Lambda}^{\alt}(\mathcal A^c)}\leq  4\pi_\Lambda^{\alt}(\partial \mathcal A\mid \mathcal A)\,,
\]
(where $\partial \mathcal A:=\{\omega:P(\omega,\mathcal A^c)>0\}$ and we used a worst-case bound of $1$ on the transition rates in $Q(\mathcal A, \mathcal A^c)$), in which case it suffices to prove that for some $c(q)>0$,
\[\pi_{\Lambda}^{\alt}(\partial \mathcal A\mid \mathcal A)\lesssim \exp(-c\epsilon^4 n)\,.
\]

For $\omega\in \mathcal A$, in order for $P(\omega,\mathcal A^c)$ to be positive ($\omega\in\partial \mathcal A$), there must exist an edge $e$ in $\mathcal S$ that is \emph{pivotal} to $\mathcal A$, i.e., $\omega(e)=1$ and $\omega'=\omega-\{e\}\notin \mathcal A$. We estimate the probability $\pi^{\alt}_\Lambda(\partial \mathcal A\mid \mathcal A)$ by taking a union bound over the probability of any edge, $e$, in $E(S)$ being pivotal to $S$.

First examine whether $e$ is closer in its $y$ coordinate to $\partial_\north S$ or $\partial_\south S$. Suppose without loss of generality, we are in the former case, whence we expose the north-most primal crossing of $S$, under $\pi_\Lambda^{\alt}(\omega \mid \mathcal A)$ (revealing, first, the configuration on $\Lambda\cap \mathcal H^+_{x+h-1,\phi}$, then the dual-components of $\partial_\north S$ in $S$). Such a crossing exists by conditioning on $\mathcal A$.

Denote by $\zeta$ the  horizontal crossing we have revealed as such. By the conditioning on $S$, it is clear that $\zeta$ must connect $(a_n,b_n)$ to $(c_n,d_n)$ in $S$. In order for $e$ to be pivotal to $S$, $e$ must be an open edge in $\zeta$ and there must exist a dual crossing connecting $e$ to $\partial _\south S$. Let $D$ be the southern connected component of $E(\Lambda)-\zeta$; we wish to bound
\[\pi_{\Lambda}^{\alt}\left(e\stackrel{D^\ast}\longleftrightarrow \partial_\south S \mid \mathcal A, \zeta,\omega\restriction_{\zeta}=1\right)\,.
\]

By monotonicity in boundary conditions, if we let $R= D\cup S$, for every such $\zeta$, 
\[\pi_{\Lambda}^{\alt}(\omega\restriction_{D}\mid \mathcal A,\zeta,\omega\restriction_{\zeta}=1) = \pi_{D}^{1,0}\succeq \pi_R^{1,0}(\omega\restriction_{D})\,,
\]
where $(1,0)$ boundary conditions denote free on $ \partial R - S$ and wired elsewhere.

We can decompose the probability
\[\pi_R^{1,0}\left(e\stackrel{D^\ast}\longleftrightarrow \partial_{\south} S\right)\leq \pi_R^{1,0}\left(e\stackrel{\ast}\longleftrightarrow \partial_{\south} S \right)
\]
into the event $\Gamma_1$ that the dual-component of $\partial_\south S$ (and thus the interface of $R$ with $(1,0)$ boundary conditions) is a subset of $S \cap \mathcal H^-_{x-h/2,\phi}$, and $\Gamma_1^c$. Under $\Gamma_1^c$, since $e$ is closer in its $y$-coordinate to $\partial_\north S$, the vertical distance between $e$ and $\partial_\south S$ is at least $h/2$ so that $e\notin S\cap \cH^-_{x-h/2,\phi}$ and $e$ cannot be dual-connected to $\partial_\south S$.

Bounding the probability of $\Gamma_1$ by monotonicity in boundary conditions and Proposition~\ref{prop:strip-surface-tension}, there exists $c(q)>0$ such that for every $\zeta$, 
\[\pi_{\Lambda}^{\alt}(\Gamma_1\mid \mathcal A,\zeta, \omega\restriction_{\zeta}=1)\leq \pi_{\mathcal S_{a_n^1,h/2,\phi}\cap \Lambda}^{1,0,a_n^1,\phi}(\mathcal I \not \subset \mathcal S_{a_n^1,h/4,\phi}\cap \Lambda)\lesssim n^2 \exp(-ch^2 n)\,.
\]
Under $\Gamma_1$ we can take a worst case bound of one on the probability of $e\stackrel{D^{\ast}}\longleftrightarrow \partial_\south S$. Therefore, for some $c(q)>0$, we have $\pi_{\Lambda}^{\alt} (\partial \mathcal A\mid \mathcal A)\lesssim \exp(-c\epsilon^4 n)$.

Using the above as a bound on $Q(\mathcal A,\mathcal A^c)/\pi_\Lambda^{\alt}(\mathcal A)$ in Eq.~\eqref{eq:Cheeger-constant} and plugging into Eq.~\eqref{eq:Cheeger-constant} implies for the FK Glauber dynamics and, by Eq.~\eqref{eq-ullrich1}, Swendsen--Wang dynamics with mixed order-disorder boundary conditions,  $\gap^{-1}\gtrsim \exp(c\epsilon^4 n)$. \end{proof}

\subsection*{Acknowledgment} We thank the anonymous referee for many useful suggestions and comments. R.G.\ was supported in part by NSF grant DMS-1507019. E.L.\ was supported in part by NSF grant DMS-1513403.

\bibliographystyle{abbrv}
\bibliography{boundary-conditions-mixing-v1-with-pictures}

\end{document}